\documentclass[final]{siamltex}

\usepackage{graphicx}
\usepackage{mathptmx}      
%

\usepackage{amssymb,amsmath, amsfonts}
\usepackage{a4wide}
\newtheorem{example}{Example}
\newtheorem{remark}{Remark}

\newcommand{\email}[1]{{\tt #1}}

\newcommand{\Gr}{{\rm gph\,}}

\newcommand{\co}{{\rm conv\,}}

\newcommand{\xb}{\bar x}
\newcommand{\yb}{\bar y}

\newcommand{\R}{\mathbb{R}}
\newcommand{\norm}[1]{\|#1\|}

\newcommand{\dist}[1]{{\rm d}(#1)}
\newcommand{\B}{{\cal B}}

\newcommand{\mv}{\,\vert\, }
\newcommand{\setto}[1]{\mathop{\to}\limits^#1}

\newcommand{\AT}[2]{{\textstyle{#1\atop#2}}}

\newcommand{\It}[1]{\par\noindent\hspace*{#1\parindent}\ignorespaces}

\newcommand{\Itl}[2]{\par\noindent\makebox[#1\parindent][l]{#2}\ignorespaces}
\newcommand{\Itlb}[2]{\par\noindent\makebox[#1\parindent][l]{#2}\hspace*{-\parindent}\makebox[\parindent][l]{\{}\ignorespaces}

\begin{document}

\title{Optimality conditions for disjunctive programs based on generalized differentiation with application to mathematical programs with equilibrium constraints\thanks{This work was supported by the Austrian Science Fund (FWF) under grant P 26132-N25}.}

\author{Helmut Gfrerer\thanks{Institute of Computational Mathematics, Johannes Kepler University Linz,
              A-4040 Linz, Austria,
             \email{helmut.gfrerer@jku.at}}
}

\date{}

\maketitle

\begin{abstract}We consider optimization problems with a disjunctive structure of the constraints. Prominent examples of such problems are mathematical programs with equilibrium constraints or vanishing constraints. Based on the concepts of directional subregularity and their characterization by means of objects from generalized differentiation, we obtain the new stationarity concept of extended M-stationarity, which turns out to be an equivalent dual characterization of  B-stationarity. These results are valid under a very weak constraint qualification of Guignard type which is usually very difficult to verify. We also state a new constraint qualification which is a little bit stronger but verifiable. Further we present second-order optimality conditions, both necessary and sufficient. Finally we apply these results to the special case of mathematical programs with equilibrium constraints and compute explicitly all the objects from generalized differentiation.  For this type of problems we also introduce the concept of strong M-stationarity which builds a bridge between S-stationarity and M-stationarity.
\end{abstract}
\begin{keywords}Optimality conditions, M-stationarity, metric subregularity
\end{keywords}
\begin{AMS}
49J53 \and 49K27 \and 90C48
\end{AMS}
\section{Introduction}
In this paper we consider mathematical programs  with disjunctive constraints of the form
\begin{equation}
\label{EqMPDC}\min_{x\in\R^n}f(x)\quad\text{subject to }\quad F(x)\in \Omega,
\end{equation}
where the functions $f:\R^n\to\R$, $F:\R^n\to\R^m$ are continuously differentiable and $\Omega=\cup_{i=1}^{\bar p} P_i$ is the union of finitely many convex polyhedra $P_i$.

A prominent example for such programs are mathematical programs
with equilibrium constraints (MPEC for short)
\begin{eqnarray}
\nonumber\min_{x\in\R^n}&&f(x)\\
\label{EqMPEC}\text{subject to }&&g(x)\leq 0,\\
\nonumber &&h(x)=0,\\
\nonumber &&G_i(x)\geq 0, H_i(x)\geq 0, G_i(x)H_i(x)=0,\ i=1,\ldots,q
\end{eqnarray}
with functions $f:\R^n\to\R$, $g:\R^n\to \R^l$, $h:\R^n\to\R^p$, and
$G, H: \R^n\to \R^q$. Note that the complementarity conditions $G_i(x)\geq 0, H_i(x)\geq 0, G_i(x)H_i(x)=0$
can be equivalently rewritten in the form
\begin{equation}
\label{EqComplCond}
-(G_i(x),H_i(x))\in Q_{\rm EC},\ i=1,\ldots,q,
\end{equation}
where
\begin{equation}\label{EqQEC}Q_{\rm EC}:=\{(a,b)\in\R^2_-\mv ab=0\}
\end{equation}
is the union of the 2 polyhedral sets $\R_-\times\{0\}$ and $\{0\}\times\R_-$. Hence the MPEC is of the form \eqref{EqMPDC} with
\begin{equation}\label{EQMPECData}F(x)=\left(\begin{array}{c}g(x)\\h(x)\\
-G_1(x)\\-H_1(x)\\\vdots\\-G_q(x)\\-H_q(x)\end{array}\right),\ \Omega=\R_-^l\times\{0\}^p\times Q_{\rm EC}^q
\end{equation}
is the union of $2^q$ polyhedral sets. Here, the minus signs are used only for convenience of the subsequent analysis.

MPECs have their origin in bilevel programming and  arise in many
applications in economic, engineering and natural sciences. We
refer to the monographs \cite{LuPaRa96, OutKoZo98} for further
details.

MPECs are known to be difficult optimization problems because  due
to the complementarity conditions $G_i(x)\geq 0,\ H_i(x)\geq 0,\
G_i(x)H_i(x)=0$ many of the standard constraint qualifications  of
nonlinear programming are violated at any feasible point. Hence
the usual Karush-Kuhn-Tucker conditions fail to hold at a local
minimzer and various first-order optimality conditions such as
Abadie (A-), Bouligand (B-), Clarke (C-), Mordukhovich (M-) and
Strong (S-) stationarity conditions have been studied in the
literature \cite{FleKan03, FuPa99,
KaSchw10a,Out00,Out99,SchSch00,Ye00,Ye05,YeYe97}. B-stationarity
expresses the  first-order necessary condition that there does not
exist a feasible descent direction at a local optimum and this is
actually the type of stationarity we want to characterize.
However, B-stationarity is  very difficult to verify  because it
is a primal first-order condition. Hence the other stationarity
concepts, which are dual first-order conditions, have been
introduced. S-stationarity is sufficient for B-stationarity but it
only holds  under some strong constraint qualification such as
MPEC-LICQ (Linear Independence Constraint Qualification). A
slightly weaker stationary concept is M-stationarity which holds
under fairly mild assumptions. However, it is to be noted that
M-stationarity (and therefore also the weaker concepts of A- and
C-stationarity) does not preclude the occurrence of feasible
descent directions. Till now no dual first-order condition is
known which is equivalent to B-stationarity under some weak
constraint qualification and we will close this gap in this paper.

Compared with the first-order optimality conditions, very little
has been done with the second-order optimality conditions for
MPECs. In \cite{SchSch00} necessary and sufficient conditions
based on the concept of S-stationarity have been stated. In
\cite{GuoLinYe12b} second-order necessary conditions in terms of
S- and C-multipliers were stated and some consequences of a strong
second-order sufficient conditions based on M-multipliers were
given.

Another example for programs with disjunctive constraints arise from programs with vanishing constraints (MPVC)
\begin{equation}
\label{EqVC}H_i(x)\geq 0,\ G_i(x)H_i(x)\leq 0, i=1,\ldots,q
\end{equation}
which can be equivalently formulated as
\[(G_i(x),H_i(x))\in Q_{\rm VC}\]
with $Q_{\rm VC}$ being the union of the 2 polyhedral sets $\R_-\times\R_+$ and $\R\times\{0\}$. For more details on MPVCs and optimality conditions we refer the reader to \cite{AchHoKa08,AchKa08,HoKa08,HoKa09,HoKaOut10,IzSo09}.

For S- and M-stationarity conditions for mathematical programs
with disjunctive constraints we refer to \cite{FleKanOut07}.

The aim of this paper is to present a unified theory of optimality
conditions based on the concepts of generalized differentiation by
Mordukhovich \cite{Mo06a,Mo06b}. In fact, by the Mordukhovich
criterion \cite[Theorem 4.18]{Mo06a}, the M-stationarity
conditions  state that a certain multifunction built by the
objective and the constraints is not metrically regular. Our
optimality conditions rely on the observation that at a local
minimizer for every critical direction such a multifunction cannot
have a certain regular behaviour. They are obtained by applying
the characterizations of directional metric regularity,
subregularity, and mixed regularity/subregularity  as can be found
in the recent papers \cite{Gfr11,Gfr13a,Gfr13b}. The resulting
first-order and second-order optimality conditions are of the form
that for every critical direction there is some multiplier
fulfilling the optimality condition. Recall  that  the standard
second-order necessary optimality conditions  for a nonlinear
programming problem, see e.g. \cite{Ben80, Iof79c,LeMiOs78}, have the same structure, namely that for
every critical direction there is some multiplier fulfilling the
second-order condition. In the context of disjunctive programming
we now need this directional form also for the first-order
conditions in order to obtain  strong necessary conditions.

In section 2 we recall the basic  definitions of the different
versions of regularity and their characterizations by means of
generalized differentiation. In section 3 we state various
optimality conditions for the problem \eqref{EqMPDC}. We obtain
first-order optimality conditions called {\em extended
M-stationarity conditions} and we will show that under some weak
constraint qualification this condition is an equivalent dual
characterization of B-stationarity.

Further we  introduce a  new constraint qualification based on
directional metric subregularity which appears to be rather weak
but is verifiable. Finally, second-order optimality conditions,
both necessary and sufficient are presented.

In section 4 we apply these results  to MPECs by explicitly
calculating the objects from generalized differentiation. Since
extended M-stationarity is still difficult to verify, we present
also the  weaker necessary condition of {\em strong
M-stationarity} which builds a bridge between S- and
M-stationarity and seems to be well suited for numerical purposes.

In what follows we denote by  $\B_{\R^n}:=\{x\in \R^n\mv
\norm{x}\leq 1\}$ the closed unit ball. For a mapping
$F:\R^n\to\R^m$ we denote by $\nabla F(\xb)$ the Jacobian and by
$\nabla^2F(\xb)$ the second derivative as defined by
\[u^T\nabla^2 F(\xb):=\lim_{t\to 0}\frac{\nabla F(\xb+tu)-\nabla F(\xb)}t\ \forall u\in\R^n.\]
Hence, for a scalar  mapping $f:\R^n\to\R$, $\nabla^2 f(\xb)$ can
be identified with the Hessian and for a mapping $F:\R^n\to\R^m$
we have
\[u^T\nabla^2 F(\xb)v=(u^T\nabla^2 F_1(\xb)v,\ldots,u^T\nabla^2 F_m(\xb)v)^T.\]

\section{Preliminaries}
We start by recalling several definitions and results from variational analysis: Let $\Omega\subset\R^n$ be an arbitrary closed set and $x\in\Omega$.
The {\em contingent}  (also called  {\em Bouligand} or {\em tangent}) {\em cone} to $\Omega$ at $x$, denoted by $T(x;\Omega)$, is
given by
\[T(x;\Omega):=\{u\in \R^n\mv \exists (u_k)\to u, (t_k)\downarrow 0: x+t_ku_k\in\Omega\}.\]
We denote by
\begin{equation}\label{DefEpsNormals}
\hat N(x;\Omega)=\{\xi \in\R^n\mv \limsup_{x'\setto \Omega
x}\frac{\xi^T( x'-x)}{\norm{x'-x}}\leq 0\}
\end{equation}
the {\em Fr\'echet} ({\em regular}) {\em normal cone} to $\Omega$.
Finally, the {\em Mordukhovich} ({\em basic/limiting}) {\em normal cone} to $\Omega$ at $x$ is
defined by
\[N(x;\Omega):=\{\xi \mv \exists  (x_k)\setto{\Omega}x,\ (\xi_k)\to \xi: \xi_k\in \hat N(x_k;\Omega) \forall k\}.\]
If $x\not\in \Omega$ we put $T(x;\Omega)=\emptyset$, $\hat N(x;\Omega)=\emptyset$ and $N(x;\Omega)=\emptyset$.

The Mordukhovich normal cone is generally nonconvex whereas the
Fr\'echet normal cone is always convex. In the case of a convex
set $\Omega$, both the Fr\'echet normal cone and the Mordukhovich normal cone
coincide with the standard normal cone from convex analysis and
moreover, the contingent cone is equal to the tangent cone in the
sense of convex analysis.

Note that $\xi\in\hat N(x;\Omega)$ $\Leftrightarrow$ $\xi^Tu\leq 0\ \forall u\in T(x;\Omega)$, i.e. $\hat N(x;\Omega)=\hat N(0;T(x;\Omega))=T(x;\Omega)^\circ$ is the polar cone  of $T(x;\Omega)$.

If $\Omega$ is a convex cone then we have
\[T(x;\Omega)=\Omega +\R\{x\},\quad \hat N(x;\Omega)=\{\xi\in \hat N(0;\Omega)=\Omega^\circ\mv \xi^Tx=0\},\]
whereas in case of  an arbitrary cone (not necessarily convex) we still have $\xi^T x=0$ $\forall \xi\in\hat N(x;\Omega)$ and consequently also $\xi^Tx=0$   $\forall \xi\in N(x;\Omega)$.

If $\Omega$ is the union of finitely many sets $\Omega_i$, $i=1,\ldots,\bar p$, then
\[T(x;\Omega)=\bigcup_{i:x\in\Omega_i}T(x;\Omega_i),\quad \hat N(x;\Omega)=\bigcap_{i:x\in\Omega_i}\hat N(x;\Omega_i).\]
The contingent cone and the F\'echet normal cone to a convex polyhedron $P$ with representation $P=\{x\in\R^n\mv a_j^Tx\leq b_j,\ j=1,\ldots,m\}$ are given
by
\[T(x;P)=\{u\in\R^n\mv a_j^Tu\leq 0,\ j\in{\cal A}(x)\},\quad \hat N(x;P)=\{\sum_{j\in{\cal A}(x)}\mu_ja_j\mv \mu_j\geq0,\ j\in{\cal A}(x)\},\]
where ${\cal A}(x):=\{j\mv a_j^Tx=b_j\}$ denotes the index set of active constraints.

Given a multifunction $M:\R^n\rightrightarrows \R^m$ and a point $(\xb,\yb)\in \Gr M:=\{(x,y)\in X\times Y\mv y\in M(x)\}$ from its graph, the {\em coderivative} of $M$ at $(\xb,\yb)$ is a multifunction $D^\ast M(\xb,\yb):\R^m\rightrightarrows \R^n$ with the values $D^\ast M(\xb,\yb)(\eta):=\{\xi\in \R^n\mv (\xi,-\eta)\in N((\xb,\yb);\Gr M)\}$, i.e. $D^\ast M(\xb,\yb)(\eta)$ is the collection of all $\xi\in \R^n$ for which there are sequences  $(x_k,y_k)\to (\xb,\yb)$ and $(\xi_k,\eta_k)\to(u,v)$ with $(\xi_k, -\eta_k)\in \hat N((x_k,y_k); \Gr M)$.

For more details we refer to the monographs \cite{Mo06a,RoWe98}

The following directional versions of these limiting constructions were introduced in \cite{Gfr13a}. Given a direction $u\in \R^n$, the Mordukhovich normal cone to a subset $\Omega\subset \R^n$ in direction $u$ at $x\in\Omega$ is
defined by
\[N(x;\Omega;u):=\{\xi\in\R^n\mv \exists (t_k)\downarrow 0,\ (u_k)\to u,\ (\xi_k)\to \xi: \xi_k\in \hat N(x+t_ku_k;\Omega) \forall k\}.\]
For a multifunction $M:\R^n\rightrightarrows \R^m$ and a direction $(u,v)\in \R^n\times \R^m$, the coderivative  of $M$ in direction $(u,v)$ at $(\xb,\yb)\in \Gr M$ is defined as the multifunction $D^\ast M((\xb,\yb);(u,v)):\R^m\rightrightarrows\R^n$ given by $D^\ast M((\xb,\yb);(u,v))(\eta):=\{\xi\in \R^n\mv (\xi,-\eta)\in N((\xb,\yb);\Gr M; (u,v))\}$.

Note that the directional versions of the Mordukhovich normal cone and the coderivative as defined in \cite{Gfr13a}  were introduced for general Banach spaces and therefore look somewhat different. In particular, in \cite{Gfr13a} it was distinguished between normal, mixed and reversed mixed coderivatives. However, in finite dimensional spaces weak-$^\ast$ and strong convergence coincide and hence this distinction is superfluous in our setting. In fact the definitions above are equivalent with the definitions from \cite{Gfr13a}.

Note that by the definition we have $N(x;\Omega;0)=N(x;\Omega)$ and $D^\ast M((\xb,\yb);(0,0))=D^\ast M(\xb,\yb)$. Further $N(x;\Omega;u)\subset N(x;\Omega)$ for all $u$ and $N(x;\Omega;u)=\emptyset$ if $u\not\in T(x;\Omega)$.

The following two lemmas give  characterizations of the directional Mordukhovich normal cone.

\begin{lemma}\label{LemInclNormalCone}Let $\Omega\subset\R^n$ be the union of finitely many closed convex sets $\Omega_i$, $i=1,\ldots, \bar p$, $\xb\in\Omega$, $u\in\R^n$. Then
\begin{equation}\label{EqInclNormalCone}
N(\xb;\Omega;u)\subset\{\xi\in N(\xb;\Omega)\mv \xi^Tu=0\}
\end{equation}
and
\begin{equation}
\label{EqInclRegNormalCone} \hat N(u;T(\xb;\Omega))\supset
\{\xi\in \hat N(\xb;\Omega)\mv \xi^T u=0\}
\end{equation}
If $\Omega$ is convex and $u\in T(\xb;\Omega)$ then both inclusion
hold with equality and therefore
$N(\xb;\Omega;u)=N(u;T(\xb;\Omega))=\hat N(u;T(\xb;\Omega))$.
\end{lemma}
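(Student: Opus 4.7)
The plan is to prove the two inclusions separately, then derive the convex equality case as a short corollary. The guiding observation is that, by the finiteness of the decomposition $\Omega=\bigcup_{i=1}^{\bar p}\Omega_i$, any limiting construction at a sequence of points near $\xb$ can be reduced, along a subsequence, to a fixed subfamily of pieces, each of which is a closed convex set for which the Fr\'echet and Mordukhovich normal cones coincide with the convex normal cone. The dual/tangent duality on a convex cone is what produces the orthogonality $\xi^Tu=0$.

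\textbf{Step 1: Inclusion \eqref{EqInclNormalCone}.} I would pick $\xi\in N(\xb;\Omega;u)$ and apply the definition to extract sequences $t_k\downarrow 0$, $u_k\to u$, $\xi_k\to\xi$ with $\xi_k\in\hat N(\xb+t_ku_k;\Omega)$. The membership $\xi\in N(\xb;\Omega)$ is then immediate from the definition of the basic normal cone, as noted just before the lemma. For the orthogonality, I would use finiteness to pass to a subsequence along which the active index set $I:=\{i\mv \xb+t_ku_k\in\Omega_i\}$ is constant. Then by the union formula $\hat N(\xb+t_ku_k;\Omega)=\bigcap_{i\in I}\hat N(\xb+t_ku_k;\Omega_i)$, and convexity of each $\Omega_i$ makes $\hat N(\cdot;\Omega_i)$ the classical convex normal cone. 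Picking any $i\in I$, plugging the feasible point $y=\xb\in\Omega_i$ into the variational inequality $\xi_k^T(y-(\xb+t_ku_k))\leq 0$ yields $\xi_k^Tu_k\geq 0$, hence $\xi^Tu\geq 0$ in the limit. Conversely, since $\xb+t_ku_k\in\Omega_i$ with $t_k\downarrow 0$ and $u_k\to u$, we have $u\in T(\xb;\Omega_i)$; and graph-closedness of the normal-cone mapping for convex sets gives $\xi\in N(\xb;\Omega_i)=T(\xb;\Omega_i)^\circ$, whence $\xi^Tu\leq 0$. Combining both bounds produces $\xi^Tu=0$.

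\textbf{Step 2: Inclusion \eqref{EqInclRegNormalCone}.} Starting from $\xi\in\hat N(\xb;\Omega)$ with $\xi^Tu=0$, I would invoke the union formula from the excerpt to write $\hat N(\xb;\Omega)=\bigcap_{i:\xb\in\Omega_i}\hat N(\xb;\Omega_i)$ and $T(\xb;\Omega)=\bigcup_{i:\xb\in\Omega_i}T(\xb;\Omega_i)$, and apply the same union formula to $T(\xb;\Omega)$ at the point $u$: $\hat N(u;T(\xb;\Omega))=\bigcap_{i:u\in T(\xb;\Omega_i)}\hat N(u;T(\xb;\Omega_i))$. For each index $i$ contributing to this intersection, $T(\xb;\Omega_i)$ is a convex cone, so the excerpt's identity for convex cones gives $\hat N(u;T(\xb;\Omega_i))=\{\eta\in T(\xb;\Omega_i)^\circ\mv \eta^Tu=0\}=\{\eta\in N(\xb;\Omega_i)\mv \eta^Tu=0\}$. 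Since $\xi$ lies in every $N(\xb;\Omega_i)$ with $\xb\in\Omega_i$ and satisfies $\xi^Tu=0$, it belongs to each factor of the intersection, hence to $\hat N(u;T(\xb;\Omega))$.

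\textbf{Step 3: The convex case.} Assume $\Omega$ is convex and $u\in T(\xb;\Omega)$, so in particular $T(\xb;\Omega)$ is itself convex. For \eqref{EqInclRegNormalCone}, equality is just the identity $\hat N(u;T(\xb;\Omega))=\{\xi\in T(\xb;\Omega)^\circ\mv \xi^Tu=0\}=\{\xi\in\hat N(\xb;\Omega)\mv \xi^Tu=0\}$, i.e.\ the same convex-cone formula applied with $\bar p=1$. For \eqref{EqInclNormalCone}, I need to produce the reverse inclusion: given $\xi\in N(\xb;\Omega)$ with $\xi^Tu=0$, I would use $u\in T(\xb;\Omega)$ to pick sequences $t_k\downarrow 0$, $u_k\to u$ with $\xb+t_ku_k\in\Omega$, and then exploit convexity via the computation $\xi^T(y-(\xb+t_ku_k))=\xi^T(y-\xb)-t_k\xi^Tu_k$ to transfer normality from $\xb$ to the perturbed points. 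This reduces, via $\xi^Tu=0$ in the limit, to showing $\xi\in\hat N(\xb+t_ku_k;\Omega)$ up to a small perturbation of $\xi$ absorbing the $t_k\xi^T(u_k-u)$ error; setting $\xi_k:=\xi$ already works because $\xi^T(y-(\xb+t_ku_k))=\xi^T(y-\xb)-t_k\xi^Tu_k\leq -t_k\xi^Tu_k$ and the residual $t_k\xi^Tu_k\to 0$ can be cancelled against a tiny correction, yielding $\xi\in N(\xb;\Omega;u)$. Finally, since $T(\xb;\Omega)$ is convex, $N(u;T(\xb;\Omega))=\hat N(u;T(\xb;\Omega))$, and the chain of equalities is complete.

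The main obstacle is the bookkeeping in Step 1: one must verify that after the subsequence reduction the limit $\xi$ actually lies in $N(\xb;\Omega_i)$ for each $i\in I$, not merely in $N(\xb;\Omega)$, since it is this piecewise membership that makes the duality $\xi^Tu\leq 0$ available via $u\in T(\xb;\Omega_i)$. Everything else is an application of the union formulas for $T$ and $\hat N$ combined with the convex-cone normal-cone identity already recorded in the preliminaries.
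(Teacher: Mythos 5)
Your Steps 1 and 2 are correct. In Step 1 your argument for $\xi^Tu\le 0$ (pass to the limit in the convex variational inequality to get $\xi\in N(\xb;\Omega_i)=T(\xb;\Omega_i)^\circ$ for each $i$ in the stabilized index set, and observe $u\in T(\xb;\Omega_i)$) is a valid alternative to the paper's device of comparing two members $\xb+t_ku_k$ and $\xb+t_{j(k)}u_{j(k)}$ of the same sequence with $t_{j(k)}/t_k\to 0$; both yield the same conclusion and yours is arguably more transparent. Step 2 follows the paper's argument essentially verbatim.

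The gap is in Step 3, in the reverse inclusion of \eqref{EqInclNormalCone}. Given $\xi\in N(\xb;\Omega)$ with $\xi^Tu=0$ and points $x_k=\xb+t_ku_k\in\Omega$, you claim that ``setting $\xi_k:=\xi$ already works'' because the residual $t_k\xi^Tu_k=t_k\xi^T(u_k-u)\to 0$ ``can be cancelled against a tiny correction.'' This is false: the estimate $\sup_{y\in\Omega}\xi^T(y-x_k)\le t_k\norm{\xi}\norm{u_k-u}$ says only that $\xi$ is an approximate normal at $x_k$ in a global sense, and it does not imply that some vector near $\xi$ belongs to $\hat N(x_k;\Omega)$. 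Concretely, take $\Omega=\{(a,b)\in\R^2\mv b\le -a^2\}$, $\xb=(0,0)$, $u=(1,0)\in T(\xb;\Omega)$, $\xi=(0,1)\in N(\xb;\Omega)$ with $\xi^Tu=0$, and $x_k=(t_k,-t_k^{3/2})$; then $u_k=(1,-t_k^{1/2})\to u$ and $\sup_{y\in\Omega}\xi^T(y-x_k)=t_k^{3/2}\to 0$, yet $x_k\in\inn\Omega$, so $\hat N(x_k;\Omega)=\{0\}$ and no correction of $\xi$ of size tending to $0$ lands in it. The conclusion $\xi\in N(\xb;\Omega;u)$ is still true, but only because one may \emph{move the base points}: the paper does this by first arranging $k\norm{u-u_k}\to 0$ and then applying Ekeland's variational principle to $x\mapsto -\xi^T(x-x_k)$ over $\Omega$, which produces points $\tilde x_k$ with $\norm{\tilde x_k-x_k}\le kt_k\xi^T(u-u_k)=o(t_k)$ (so that $(\tilde x_k-\xb)/t_k\to u$ still holds) at which $\xi-\frac 1k\eta_k\in\hat N(\tilde x_k;\Omega)$ for some $\eta_k\in\B_{\R^n}$. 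This relocation of the base point is the essential step your sketch omits, and without it the convex equality case is not proved.
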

\begin{proof}
Let $\xi\in N(\xb;\Omega;u)$ be arbitrarily  fixed and consider
sequences $(t_k)\downarrow 0$, $(u_k)\to u$, $(\xi_k)\to \xi$ with
$\xi_k\in \hat N(\xb+t_ku_k;\Omega)$ for all $k$. Let
$I_k:=\{i\in\{1,\ldots,\bar p\}\mv \xb+ t_ku_k\in \Omega_i\}$.
Since there are only finitely many subsets of $\{1,\ldots,\bar
p\}$, by passing to a subsequence  we can assume that there is
some index set $I$ such that $I_k=I$ for all $k$. Let $\bar i\in
I$ arbitrarily fixed. Since $\Omega_{\bar i}$ is closed we have
$\xb\in \Omega_{\bar i}$ and therefore
$\xi_k^T(\xb-(\xb+t_ku_k))=-t_k\xi_k^Tu_k\leq 0$, implying $\xi^T
u\geq 0$. Since $(t_k)\downarrow 0$, we can find for each $k$ some
index $j(k)\geq k$ with $t_{j(k)}/t_k\leq \frac 1k$. Then for all
$k$ we have $\xi_{j(k)}^T(\xb+t_ku_k-(\xb+t_{j(k)}u_{j(k)}))\leq
0$ and therefore
\[0\geq \lim_{k\to\infty}\xi_{j(k)}^T(u_k-\frac{t_{j(k)}}{t_k} u_{j(k)})=\xi^Tu\]
also holds. Hence the inclusion \eqref{EqInclNormalCone} is shown.

To show \eqref{EqInclRegNormalCone}, consider the index sets $\bar
I:= \{i\in\{1,\ldots,\bar p\}\mv \xb\in \Omega_i\}$, $I_u:=
\{i\in\bar I\mv u\in T(\xb;\Omega_i)\}$ and $\eta\in \{\xi\in \hat
N(\xb;\Omega)\mv \xi^T u=0\}=\bigcap_{i\in \bar I}\{\xi\in \hat
N(\xb;\Omega_i)\mv \xi^Tu=0\}$.  Taking into account that
$T(\xb;\Omega_i)$ is a convex cone and $\Omega_i$ is convex for
each $i$, we have
\[\hat N(u;T(\xb;\Omega_i))=\{\eta\in \hat N(0;T(\xb;\Omega_i))\mv
\eta^Tu=0\}=\{\eta\in \hat N(\xb;\Omega_i)\mv \eta^Tu=0\}\ \forall
i\in I_u\] and therefore
\[\eta\in\{\xi\in \hat N(\xb;\Omega)\mv \xi^T u=0\}=  \bigcap_{i\in \bar I}\{\xi\in \hat N(\xb;\Omega_i)\mv
\xi^Tu=0\}\subset \bigcap_{i\in I_u}\hat N(u;T(\xb;\Omega_i))=\hat
N(u;T(\xb;\Omega))\] proving \eqref{EqInclRegNormalCone}.

To show the assertion about equality in the case of convex
$\Omega$, let $u\in T(\xb;\Omega)$ and $\xi\in N(\xb;\Omega)$ with
$\xi^Tu=0$ be arbitrarily fixed. Then we can find sequences
$(t_k)\downarrow 0$ and $(u_k)\to u$ with
$x_k:=\xb+t_ku_k\in\Omega$ for all $k$ and, by passing to a
subsequence if necessary, we can assume $k\norm{u-u_k}\to 0$.
Because of
\[-\xi^T(x-x_k)=-\xi^T(x-\xb)+t_k\xi^Tu_k=-\xi^T(x-\xb)+t_k\xi^T(u_k-u)\geq t_k\xi^T(u_k-u)\ \forall x\in\Omega,\]
by invoking Ekeland's variational principle, there is for every $k$ some $\tilde x_k\in \Omega$ such that $\norm{\tilde x_k-x_k}\leq kt_k\xi^T(u-u_k)$ and $\tilde x_k$ is a global minimizer of the problem
\[\min_{x\in\Omega} -\xi^T(x-x_k)+\frac 1k\norm{x-\tilde x_k}.\]
By the well known first order optimality conditions from  convex
analysis (see, e.g., \cite[Theorem 27.4]{Ro70}) there is some
element $\eta_k\in\B_{\R^n}$  such that $\xi-\frac
1k\eta_k=:\xi_k\in \hat N(\tilde x_k;\Omega)$ , showing
$\lim_{k\to\infty} \xi_k=\xi$.  Since we also have
\[\limsup_{k\to\infty}\norm{\frac {\tilde x_k-\xb}{t_k}-u}\leq \limsup_{k\to\infty}(\norm{\frac {\tilde x_k-x_k}{t_k}}+\norm{\frac {x_k-\xb}{t_k}-u})\leq \limsup_{k\to\infty}(k\norm{\xi}\norm{u_k-u}+\norm{u_k-u})=0,\]
$\xi\in N(x;\Omega;u)$ follows. Finally note, that equality in
\eqref{EqInclRegNormalCone} holds for convex $\Omega$ because of
\[\hat N(u;T(\xb;\Omega))= \{\xi\in \hat N(0;T(\xb;\Omega))\mv \xi^T u=0\}
=\{\xi\in \hat N(\xb;\Omega)\mv \xi^T u=0\}.\]
\end{proof}

\begin{lemma}\label{LemBasicPropCone}Let $\Omega\subset\R^n$ be the union of finitely many polyhedra $P_i$, $i=1,\ldots, \bar p$ and let $\xb\in\Omega$ and $u\in T(\xb;\Omega)$. Then
\begin{equation}\label{EqInclNormalCone1}
N(\xb;\Omega;u)=\bigcup_{v\in  T(u;T(\xb;\Omega))}\hat N(v; T(u;T(\xb;\Omega))).
\end{equation}
\end{lemma}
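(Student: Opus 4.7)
The plan is to prove both inclusions using as the main technical tool two polyhedral identities: $\hat N(\xb+tu_k;P_i)=\hat N(u_k;T(\xb;P_i))$, valid whenever $t>0$ is small and $\xb+tu_k\in P_i$, and its once-iterated version $\hat N(u+w;T(\xb;P_i))=\hat N(w;T(u;T(\xb;P_i)))$ for small $w$ with $u+w\in T(\xb;P_i)$. Both follow from the standard description of the Fr\'echet normal cone to a polyhedron via active constraints together with the observation that, under small perturbations, only constraints active at the base point can become active at the perturbed point. I would establish these as preliminary facts, then apply them twice: once to pass from $\xb+t_ku_k$ into $T(\xb;\Omega)$, and once more to pass from $u+w_k$ into $T(u;T(\xb;\Omega))$.

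For the inclusion $\subset$, take $\xi\in N(\xb;\Omega;u)$ with witnessing sequences $(t_k)\downarrow 0$, $(u_k)\to u$, $(\xi_k)\to\xi$ and $\xi_k\in\hat N(\xb+t_ku_k;\Omega)$. Passing to a subsequence, the set $I:=\{i\mv \xb+t_ku_k\in P_i\}$ is constant, and the formula for the Fr\'echet normal cone to a union together with the two polyhedral identities yields $\xi_k\in\bigcap_{i\in I}\hat N(w_k;T(u;T(\xb;P_i)))$, where $w_k:=u_k-u$. I would then check, using the equivalence $u+w_k\in T(\xb;P_i)\Leftrightarrow w_k\in T(u;T(\xb;P_i))$ valid for small $w_k$, that $\{i\mv u\in T(\xb;P_i),\ w_k\in T(u;T(\xb;P_i))\}=I$ for large $k$, which identifies the previous intersection with $\hat N(w_k;T(u;T(\xb;\Omega)))$. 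A further subsequence making the active sets of $w_k$ in each $T(u;T(\xb;P_i))$, $i\in I$, constant renders $\hat N(w_k;T(u;T(\xb;\Omega)))$ independent of $k$, so $\xi$ lies in it and I may set $v:=w_K\in T(u;T(\xb;\Omega))$ for any sufficiently large $K$.

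For the inclusion $\supset$, take $v\in T(u;T(\xb;\Omega))$ and $\xi\in\hat N(v;T(u;T(\xb;\Omega)))$, and set $I_v:=\{i\mv u\in T(\xb;P_i),\ v\in T(u;T(\xb;P_i))\}$, which is nonempty by the union formula for the tangent cone. I would build the constant sequence $\xi_k\equiv\xi$, together with $u_k:=u+s_kv$ and $t_k\downarrow 0$, with $s_k\downarrow 0$ chosen small enough. For $i\in I_v$ the inclusion $u_k\in T(\xb;P_i)$ holds for small $s_k$, hence $\xb+t_ku_k\in P_i$ for small $t_k$, and cone homogeneity combined with the two polyhedral identities gives $\hat N(\xb+t_ku_k;P_i)=\hat N(s_kv;T(u;T(\xb;P_i)))=\hat N(v;T(u;T(\xb;P_i)))\ni\xi$. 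For $i\notin I_v$ a three-case analysis (either $\xb\notin P_i$, or $u\notin T(\xb;P_i)$, or $v\notin T(u;T(\xb;P_i))$) shows $u_k\notin T(\xb;P_i)$, and by convexity of $P_i$ therefore $\xb+t_ku_k\notin P_i$. Thus $\{i\mv \xb+t_ku_k\in P_i\}=I_v$ and $\xi_k=\xi\in\hat N(\xb+t_ku_k;\Omega)$, witnessing $\xi\in N(\xb;\Omega;u)$.

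The main obstacle is the bookkeeping in the $\subset$ direction: to convert distributed information $\xi_k\in\hat N(\xb+t_ku_k;\Omega)$ with a drifting base point into the single statement $\xi\in\hat N(v;T(u;T(\xb;\Omega)))$ at one explicit direction $v$, one must (i) localize to the right collection of polyhedra that are eventually active, (ii) replace the perturbed base point by $\xb$ plus a direction, and (iii) pin down a single $v$ from the limiting process rather than only a limit direction. The finiteness of the possible active index sets, which allows all the relevant quantities to be made constant along a subsequence, is precisely what makes this bookkeeping finite and turns the limiting $v$ into a simple choice $v=w_K$ realized inside the sequence.
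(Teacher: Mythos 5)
Your proposal is correct and follows essentially the same route as the paper's proof: both directions rest on the same active-set bookkeeping for the polyhedra $P_i$, with your curve $\xb+t_k(u+s_kv)$ playing the role of the paper's $\xb+tu+t^2v$ in the inclusion $\supset$, and your choice $v=w_K=u_K-u$ (with active sets made constant along a subsequence) matching the paper's choice $v=u_K$ in the inclusion $\subset$, the two being interchangeable since $\hat N(\cdot\,;T(u;T(\xb;P_i)))$ is unchanged by translation along $u$. The packaging into the two polyhedral identities is a tidier organization of the same computations, not a different argument.
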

\begin{proof}
Let  the polyhedra $P_i$, $i=1,\ldots,\bar p$ be represented  by
\[P_i=\{x\in\R^n \mv a_{ij}^Tx\leq b_j,\  j=1,\ldots,m_i\}\]
and denote $\bar{\cal P}:=\{i\in\{1,\ldots,\bar p\}\mv \xb\in P_i\}$ and $\bar{\cal A}_i:=\{j\in\{1,\ldots,m_i\}\mv a_{ij}^T\xb=b_i\}$ for $i\in\bar{\cal P}$.
Then
\[T(\xb;\Omega)=\bigcup_{i\in\bar{\cal P}}T(\xb;P_i)= \bigcup_{i\in\bar{\cal P}}\{v\in\R^n\mv a_{ij}^Tv\leq 0,\ j\in\bar{\cal A}_i\}\]
and denoting ${\cal P}(u):=\{i\in\bar {\cal P}\mv u\in T(\xb;P_i)\}$ and ${\cal A}_i(u):=\{j\in\bar{\cal A}_i\mv a_{ij}^Tu=0\}$ for $i\in{\cal P}(u)$ we have
\begin{equation}
\label{EqTanTanCone}
T(u;T(\xb;\Omega))=\bigcup_{i\in{\cal P}(u)}\{v\in\R^n\mv a_{ij}^T v\leq 0,\ j\in{\cal A}_i(u)\}.
\end{equation}
Now let $v\in T(u;T(\xb;\Omega))$ and $\xi \in \hat
N(v;T(u;T(\xb;\Omega)))$ be  arbitrarily fixed and let ${\cal
P}^v:=\{i\in{\cal P}(u)\mv v\in T(u;T(\xb;P_i))\}$ and ${\cal
A}_i^v:=\{j\in{\cal A}_i(u)\mv a_{ij}^Tv=0\}$, $i\in{\cal P}^v$.
Then $\xi\in \bigcap_{i\in{\cal P}^v}\hat N(v;T(u;T(\xb;P_i)))$
and thus for each $i\in{\cal P}^v$ there are nonnegative numbers
$\mu_{ij}\geq 0$, $j\in{\cal A}^v_i$ such that
$\xi=\sum_{j\in{\cal A}_i^v}\mu_{ij}a_{ij}$. We claim that for all
$t>0$ sufficiently small $\xi\in \hat N(\xb+tu+t^2v;\Omega)$. To
prove this claim it is sufficient to show ${\cal
P}^v=\{i\in\{1,\ldots,\bar p\}\mv \xb+tu+t^2 v\in P_i\}$ and
${\cal A}_i^v=\{j\in\{1,\ldots,m_i\}\mv
a_{ij}^T(\xb+tu+t^2v)=b_j\}$, $i\in{\cal P}^v$ for all $t>0$
sufficiently small, since then
\[\hat N(\xb+tu+t^2v;\Omega)=\bigcap_{i\in{\cal P}^v}\{\sum_{j\in{\cal A}_i^v}\mu_{ij}a_{ij}\mv \mu_{ij}\geq 0, j\in{\cal A}_i^v\}.\]
Let $i\in{\cal P}^v$ and $j\in\{1,\ldots,m_i\}$. The index set $\{1,\ldots,m_i\}$ can be partitioned into the four sets $J_1=\{1,\ldots,m_i\}\setminus \bar{\cal A}_i$, $J_2:=\bar{\cal A}_i\setminus{\cal A}_i(u)$, $J_3:={\cal A}_i(u)\setminus {\cal A}_i^v$ and ${\cal A}_i^v$. If $j\in {\cal A}_i^v$ we have $a_{ij}^Tv=a_{ij}^Tu=a_{ij}\xb-b_j=0$, if $j\in J_3$ we have $a_{ij}^Tv<a_{ij}^Tu=a_{ij}\xb-b_j=0$, if $j\in J_2$ we have $a_{ij}^Tu<a_{ij}\xb-b_j=0$ and finally $a_{ij}\xb-b_j<0$ for $j\in J_1$. This shows $a_{ij}^T(\xb+tu+t^2v)-b_j=0$, $j\in {\cal A}^v_i$ and $a_{ij}^T(\xb+tu+t^2v)-b_j<0$, $j\in J_1\cup J_2\cup J_3$ for all $t>0$ sufficiently small and we can conclude ${\cal P}^v\subset\{i\in\{1,\ldots,\bar p\}\mv \xb+tu+t^2 v\in P_i\}$ and ${\cal A}_i^v=\{j\in\{1,\ldots,m_i\}\mv a_{ij}^T(\xb+tu+t^2v)=b_j\}$, $i\in{\cal P}^v$. For $i\not\in {\cal P}^v$ we either have $i\in I_1:=\{1,\ldots,\bar p\}\setminus \bar{\cal P}$ or $i\in I_2:=\bar{\cal P}\setminus{\cal P}(u)$ or $i\in I_3:={\cal P}(u)\setminus{\cal P}^v$. If $i\in I_1$ there is some $j\in\{1,\ldots,m_i\}$ with $a_{ij}^T\xb-b_j>0$, if $i\in I_2$ there is some $j\in\bar{\cal A}_i$ with $0=a_{ij}^T\xb-b_j<a_{ij}^Tu$ and finally for $i\in I_3$ there is some $j\in{\cal A}_i(u)$ with $0=a_{ij}^T\xb-b_j=a_{ij}^Tu<a_{ij}^Tv$. Hence there is some $j$ with $a_{ij}^T(\xb+tu+t^2v)>0$ for all $t>0$ sufficiently small and ${\cal P}^v\supset\{i\in\{1,\ldots,\bar p\}\mv \xb+tu+t^2 v\in P_i\}$ follows and our claim is proved. Since $\lim_{t\downarrow0} t^{-1}(\xb+tu+t^2v-\xb)=u$ we obtain $\xi\in N(\xb;\Omega;u)$ and $\bigcup_{v\in  T(u;T(\xb;\Omega))}\hat N(v; T(u;T(\xb;\Omega)))\subset N(\xb;\Omega;u)$ follows.

To show the reverse inclusion, let $\xi\in N(\xb;\Omega;u)$  and
consider sequences $(t_k)\downarrow 0$, $(u_k)\to u$ and
$(\xi_k)\to \xi$ such that $\xi_k\in \hat N(\xb+t_ku_k;\Omega)$.
Then for all $k$ sufficiently large we have ${\cal P}^k:=\{i\in
\{1,\ldots,\bar p\}\mv \xb+t_ku_k\in P_i\}\subset {\cal P}(u)$ and
${\cal A}_i^k:=\{j\in\{1,\ldots,m_i\}\mv
a_{ij}^T(\xb+t_ku_k)=b_i\}=\{j\in \bar{\cal A}_i\mv
a_{ij}^Tu_k=0\}\subset {\cal A}_i(u)$, $i\in{\cal P}^k$  and
\[\xi_k\in \bigcap_{i\in{\cal P}^k}\hat N(\xb+t_ku_k;P_i)=\bigcap_{i\in{\cal P}^k}\{\sum_{j\in{\cal A}_i^k}\mu_{ij}a_{ij}\mv \mu_{ij}\geq0\}.\]
It follows that $a_{ij}^Tu_k\leq 0$,  $j\in{\cal A}_i(u)$,
$i\in{\cal P}^k$ and hence $u_k\in T(u; T(\xb;P_i))$, $i\in {\cal
P}^k$. Since there are only finitely many subsets of
$\{1,\ldots,\bar p\}$ and $\{1,\ldots,m_i\}$, $i\in\{1,\ldots,\bar
p\}$ we can assume, by eventually passing to a subsequence, that
there are index sets ${\cal P}^\xi\subset{\cal P}(u)$, ${\cal
A}_i^\xi\subset {\cal A}_i(u)$, $i\in{\cal P}^\xi$ such that
${\cal P}^k={\cal P}^\xi$, ${\cal A}_i^k={\cal A}_i^\xi$,
$i\in{\cal P}^\xi$ for all $k$. Because the normal cones $\hat
N(\xb+t_ku_k;P_i)$, $i\in{\cal P}^\xi$ are closed, we obtain
$\xi\in\hat N (\xb+t_ku_k;P_i)$, $i\in{\cal P}^\xi$. Now let $k$
be arbitrarily fixed. For every $i\in{\cal P}(u)\setminus {\cal
P}^\xi$ there is some index $j_i\in{\cal A}_i(u)$ with
$a_{ij_i}^Tu_k>0$ and therefore $u_k\not\in T(u; T(\xb;P_i))$.
Since $T(u; T(\xb;P_i))$ is closed we can find $\delta>0$ such
that for every $w\in T(u;T(\xb;\Omega))\cap(u_k+\delta\B_{\R^n})$
we have $w\not\in T(u; T(\xb;P_i))$, $i\in {\cal P}(u)\setminus
{\cal P}^\xi$ showing ${\cal P}^w:=\{i\in {\cal P}(u)\mv w\in
T(u;T(\xb;P_i))\}\subset {\cal P}^\xi$. Thus, for every $i\in{\cal
P}^w$ there are nonnegative numbers $\mu_{ij}\geq 0$, $j\in{\cal
A}_i^\xi$ with $\xi=\sum_{j\in{\cal A}_i^\xi}\mu_{ij}a_{ij}$
implying
\[\xi^T(w-u_k)=\sum_{j\in{\cal A}_i^\xi}\mu_{ij}a_{ij}^T(w-u_k)=\sum_{j\in{\cal A}_i^\xi}\mu_{ij}a_{ij}^Tw\leq 0\]
because of $a_{ij}^Tw\leq 0$, $j\in{\cal A}_i(u)\supset{\cal A}_i^\xi$ and we conclude $\xi\in \hat N(u_k,T(u;T(\xb;\Omega)))$. This finishes the proof.
\end{proof}

In particular it follows from \eqref{EqInclNormalCone1} that for every $\bar v\in T(u;T(\xb;\Omega))$ we have
\begin{eqnarray}
\label{EqInclNormalCone2}&\hat N(\bar v; T(u;T(\xb;\Omega)))\subset N(\xb;\Omega;u),&\\
\label{EqInclNormalCone3} &N(\bar v; T(u;T(\xb;\Omega)))=\limsup_{v\to\bar v}\hat N(v; T(u;T(\xb;\Omega)))\subset N(\xb;\Omega;u).&
 \end{eqnarray}

In what follows we consider the notions of metric regularity and subregularity, respectively, and its characterization by coderivatives and Mordukhovich normal cones.

Recall that a multifunction $M:\R^n\rightrightarrows \R^m$ is called {\em
metrically regular} with modulus $\kappa>0$ near the point $(\xb,\yb)\in \Gr M$ from its graph,
provided there exist  neighborhoods $U$ of $\xb$ and $V$ of $\yb$
such that
\begin{equation}
\label{EqReg0} \dist{x,M^{-1}(y)}\leq \kappa \dist{y,M(x)}\quad \forall (x,y)\in U\times V.
\end{equation}
Here $\dist{x,\Omega}$ denotes the usual distance between a point $x$ and a set $\Omega$.

It is well known that metric regularity of the multifunction $M$ near $(\xb,\yb)$ is equivalent to the Aubin property of the inverse multifunction $M^{-1}$. A multifunction $S:\R^m\rightrightarrows \R^n$ has the {\em Aubin property} with modulus $L\geq 0$ near some point $(\yb,\xb)\in\Gr S$, if there are neighborhoods $U$ of $\xb$ and $V$ of $\yb$ such that
\[S(y_1)\cap U\subset S(y_2)+L\norm{y_1-y_2}\B_{\R^n}\quad \forall y_1,y_2\in V.\]
We refer to the monographs \cite{Mo06a,Mo06b,KlKum02,RoWe98} and the survey  \cite{Iof00} for an extensive treatment of these subjects and the related notions of  {\em pseudo-Lipschitz continuity}, {\em Lipschitz-like property} and {\em openness with a linear rate}.

Metric regularity can be equivalently characterized by the so-called {\em Mordukhovich criterion} (cf. \cite[Theorem 4.18]{Mo06a}, \cite[Theorem 9.43]{RoWe98}):

\begin{theorem}For a multifunction $M:\R^n\rightrightarrows\R^m$ with closed graph and any $(\xb,\yb)\in\Gr M$ the following statements are equivalent:
\begin{enumerate}
\item $M$ is metrically regular near $(\xb,\yb)$.
\item $\ker D^\ast M(\xb,\yb)=\{0\}$, i.e. $0\in D^\ast M(\xb,\yb)(\lambda)\Rightarrow \lambda=0$.
\end{enumerate}
\end{theorem}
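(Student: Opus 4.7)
The plan is to prove each implication separately. Implication (1)$\Rightarrow$(2) rests on a Fréchet-normal estimate derived from metric regularity, while (2)$\Rightarrow$(1) is a contradiction argument based on Ekeland's variational principle.

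For (1)$\Rightarrow$(2), the key observation is that metric regularity of $M$ near $(\xb,\yb)$ with modulus $\kappa$ is equivalent to the Aubin property of $M^{-1}$ with the same modulus near $(\yb,\xb)$. Hence, for every $(x,y)\in\Gr M$ close to $(\xb,\yb)$, every $v\in\R^m$ and every small $t>0$, there exists $(x_t,y+tv)\in\Gr M$ with $\norm{x_t-x}\le\kappa t\norm{v}+o(t)$. Substituting such $(x_t,y+tv)$ into the Fréchet inequality defining $(\xi,-\eta)\in\hat N((x,y);\Gr M)$, dividing by $t$, taking $t\downarrow 0$ and optimizing over unit $v$ yields the pointwise bound $\norm{\eta}\le\kappa\norm{\xi}$. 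Applied along any defining sequence of an element $\lambda\in\ker D^\ast M(\xb,\yb)$ — i.e.\ $\lambda=\lim\eta_k$ with matching $\xi_k\to 0$ — this estimate forces $\lambda=0$.

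For (2)$\Rightarrow$(1), I would argue by contraposition. Failure of metric regularity produces sequences $(x_k,y_k)\to(\xb,\yb)$ with $d_k:=\dist{y_k,M(x_k)}>0$ and $R_k:=\dist{x_k,M^{-1}(y_k)}>k d_k$; pick $\bar y_k\in M(x_k)$ with $\norm{\bar y_k-y_k}=d_k$. Apply Ekeland's variational principle to the lower semicontinuous function $(x,y)\mapsto\norm{y-y_k}+\delta_{\Gr M}(x,y)$ with tolerance $d_k$ and radius $r_k:=\min(\sqrt{d_k},R_k/2)$. This produces $(\tilde x_k,\tilde y_k)\in\Gr M$ satisfying $\norm{(\tilde x_k,\tilde y_k)-(x_k,\bar y_k)}\le r_k$ that exactly minimizes the functional perturbed by $(d_k/r_k)\norm{(x,y)-(\tilde x_k,\tilde y_k)}$. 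Since $r_k<R_k$ forces $\tilde x_k\notin M^{-1}(y_k)$, one has $\tilde y_k\ne y_k$ and $\norm{\cdot-y_k}$ is smooth at $\tilde y_k$ with unit gradient $v_k$. The first-order condition at this minimizer, combined with standard Fréchet subdifferential calculus, delivers $(\xi_k,-\eta_k)\in\hat N((\tilde x_k,\tilde y_k);\Gr M)$ with $\norm{\xi_k}\le d_k/r_k$ and $\norm{\eta_k-v_k}\le d_k/r_k$; both error terms tend to zero. Using $r_k\to 0$ and closedness of $\Gr M$, a subsequential limit produces $\lambda$ with $\norm{\lambda}=1$ and $(0,-\lambda)\in N((\xb,\yb);\Gr M)$, contradicting~(2).

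The main obstacle is the simultaneous calibration of the Ekeland parameters in (2)$\Rightarrow$(1): the ratio $d_k/r_k$ must vanish so that the perturbation subgradient disappears in the limit, $r_k$ itself must vanish so that $(\tilde x_k,\tilde y_k)\to(\xb,\yb)$, and $r_k$ must stay strictly below $R_k$ so that $\tilde y_k\ne y_k$ and $\norm{\cdot-y_k}$ is smooth there with a unit gradient rather than a ball of subgradients. The hypothesis $R_k>k d_k$ combined with $d_k\to 0$ makes the choice $r_k=\min(\sqrt{d_k},R_k/2)$ satisfy all three requirements simultaneously. The remaining pieces — the Fréchet-sum-rule bookkeeping at the Ekeland minimizer and the passage to the limiting normal cone — are standard variational-analysis facts.
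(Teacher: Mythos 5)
The paper does not prove this theorem; it is the classical Mordukhovich criterion, quoted with references to the monographs of Mordukhovich and Rockafellar--Wets. Your argument is precisely the standard proof found there: the estimate $\norm{\eta}\le\kappa\norm{\xi}$ for Fr\'echet normals $(\xi,-\eta)$ to $\Gr M$ derived from the Aubin property of $M^{-1}$, and an Ekeland-based contraposition for the converse. The implication (1)$\Rightarrow$(2) is complete and correct as sketched.

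In (2)$\Rightarrow$(1) there is one genuine gap: you invoke $d_k\to 0$, but this does not follow from the mere failure of metric regularity. The witnesses $(x_k,y_k)$ are only known to satisfy $\norm{x_k-\xb},\norm{y_k-\yb}\to 0$ and $R_k>k\,d_k$; since $x\mapsto\dist{\yb,M(x)}$ need not be upper semicontinuous, $d_k=\dist{y_k,M(x_k)}$ can stay bounded away from $0$. In that case $\bar y_k$ does not converge to $\yb$, the Ekeland points $(\tilde x_k,\tilde y_k)$ do not converge to $(\xb,\yb)$ (so the limiting normal is produced at the wrong reference point), and $d_k/r_k=\max(\sqrt{d_k},\,2d_k/R_k)$ no longer vanishes. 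The fix is a standard reduction that must be stated: if for some $\gamma>0$ the regularity estimate held for all $(x,y)\in U\times V$ with $\dist{y,M(x)}\le\gamma$ (with $V\subset \yb+\gamma\B_{\R^m}$), then it would hold on all of $U\times V$ after enlarging the constant, because for $\dist{y,M(x)}>\gamma$ one has $\dist{x,M^{-1}(y)}\le\norm{x-\xb}+\dist{\xb,M^{-1}(y)}\le\norm{x-\xb}+\kappa\,\dist{y,M(\xb)}\le\norm{x-\xb}+\kappa\norm{y-\yb}\le C$ and hence $\dist{x,M^{-1}(y)}\le(C/\gamma)\dist{y,M(x)}$. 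Consequently, failure of metric regularity lets you choose the witnesses with the additional property $d_k\le 1/k$, and with that your calibration $r_k=\min(\sqrt{d_k},R_k/2)$ works exactly as described. The remaining appeal to ``standard Fr\'echet subdifferential calculus'' is acceptable: in $\R^n\times\R^m$ the fuzzy sum rule, or the representation of the limiting normal cone as an outer limit of $\varepsilon$-normals with $\varepsilon_k=d_k/r_k\downarrow 0$, converts the Ekeland first-order condition into the normals you claim, possibly at points slightly perturbed from $(\tilde x_k,\tilde y_k)$, which is harmless for the passage to the limit.
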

Applying this criterion to multifunctions of the form $M(x)=F(x)-\Omega$ we obtain the following collorary, see e.g. \cite{RoWe98}:
\begin{corollary}
Let $M:\R^n\rightrightarrows\R^m$, $M(x)=F(x)-\Omega$ be a multifunction, where $F:\R^n\to\R^m$ is continuously differentiable, $\Omega\subset\R^m$ is closed and let $\xb\in\R^n$ be given with $F(\xb)\in\Omega$. Then $M$ is metrically regular near $(\xb,0)$ if and only if
\begin{equation}\label{EqMCCQ}
\nabla F(\xb)^T\lambda=0,\ \lambda\in N(F(\xb);\Omega)\ \Rightarrow \lambda=0
\end{equation}
\end{corollary}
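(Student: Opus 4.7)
The plan is to apply the Mordukhovich criterion from the preceding theorem to the multifunction $M(x)=F(x)-\Omega$, so the task reduces to computing $D^\ast M(\xb,0)$ and identifying its kernel with the set $\{\lambda\in N(F(\xb);\Omega)\mv \nabla F(\xb)^T\lambda=0\}$ that appears in \eqref{EqMCCQ}.

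The first step is to represent $\Gr M$ as a preimage of $\Omega$ under the smooth map $\Phi:\R^n\times\R^m\to\R^m$ defined by $\Phi(x,y):=F(x)-y$. Its Jacobian at $(\xb,0)$ is
\[\nabla\Phi(\xb,0)=\bigl(\nabla F(\xb),-I\bigr),\]
which is surjective thanks to the $-I$ block, so $\Phi$ is a submersion at $(\xb,0)$ and no constraint qualification is needed in order to invoke the chain rule for Mordukhovich normals to a preimage. That rule then yields
\[N\bigl((\xb,0);\Gr M\bigr)=\nabla\Phi(\xb,0)^T N(F(\xb);\Omega)=\{(\nabla F(\xb)^T\lambda,-\lambda)\mv \lambda\in N(F(\xb);\Omega)\}.\]

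From here the computation is essentially automatic. Unwinding the definition
\[D^\ast M(\xb,0)(\lambda)=\{\xi\in\R^n\mv (\xi,-\lambda)\in N((\xb,0);\Gr M)\}\]
forces $\lambda\in N(F(\xb);\Omega)$ and $\xi=\nabla F(\xb)^T\lambda$, so
\[\ker D^\ast M(\xb,0)=\{\lambda\in N(F(\xb);\Omega)\mv \nabla F(\xb)^T\lambda=0\}.\]
Combining this identification with the Mordukhovich criterion yields the stated equivalence.

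The only potentially delicate point in this plan is invoking the preimage chain rule for the limiting normal cone as an equality rather than a mere inclusion; but since $\Phi$ is a smooth mapping with surjective derivative at $(\xb,0)$, this is the classical case of a smooth submersion, for which the formula $N(z;\Phi^{-1}(\Omega))=\nabla\Phi(z)^T N(\Phi(z);\Omega)$ holds verbatim and requires no additional hypotheses on $\Omega$.
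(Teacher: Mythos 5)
Your proposal is correct and follows exactly the route the paper intends: the paper derives this corollary by applying the Mordukhovich criterion to $M(x)=F(x)-\Omega$ and refers to \cite{RoWe98} for the standard coderivative computation, which is precisely the preimage/submersion argument you spell out. The surjectivity of $\bigl(\nabla F(\xb),-I\bigr)$ does indeed make the normal-cone chain rule an equality, so $\ker D^\ast M(\xb,0)=\{\lambda\in N(F(\xb);\Omega)\mv \nabla F(\xb)^T\lambda=0\}$ and the equivalence follows.
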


Among other things metric regularity is important in the context of constraint qualifications:

\begin{example}Consider a system of inequalities  and equalities
\[ g(x)\leq 0,\ h(x)=0\]
with continuously differentiable functions $g:\R^n\to\R^l$, $h:\R^n\to \R^p$. Recall that at a solution $\xb$ the {\em Mangasarian Fromovitz constraint qualification} (MFCQ) is said to hold, if the gradients $\nabla h_i(\xb)$ are linearly independent and there exists a vector $z\in\R^n$ with
\[\nabla h(\xb)z=0, \nabla g_i(\xb) z<0,\ i\in I(\xb),\]
where $I(\xb)=\{i\mv g_i(\xb)=0\}$ denotes the index set of active inequalities.

It is well known \cite{Rob76a} that MFCQ is equivalent with metric regularity of the multifunction $M:\R^n\rightrightarrows \R^l\times \R^p$
\[M(x):=(g(x),h(x))-\R_-^l\times \{0\}^p\]
near $(\xb,0)$. Straightforward calculations yield that condition \eqref{EqMCCQ} reads as
\[\sum_{i\in I(\xb)}\nabla g_i(\xb)\lambda^g_i+\sum_{i=1}^p\nabla h_i(\xb)\lambda^h_i=0, \lambda^g_i\geq 0,i\in I(\xb)\ \Rightarrow \lambda^g_i=0, i\in I(\xb), \lambda^h_i=0,\ i=1,\ldots,p\]
which is also called {\em positive linear independence constraint qualification}.
\end{example}

Condition \eqref{EqMCCQ} appears under different names in the literature. E.g. in \cite{GuoLinYe12a}, \cite{Ye00}, \cite{Ye05} it is called {\em no nonzero abnormal multiplier constraint qualification} (NNAMCQ), whereas in \cite{FleKanOut07} it is called {\em generalized Mangasarian-Fromovitz constraint qualification} (GMFCQ).

When fixing $y=\yb$ in \eqref{EqReg0} we obtain the weaker property of {\em metric subregularity} of $M$ at $(\xb,\yb)$, i.e. we require the estimate
\begin{equation}
\label{EqSubReg0} \dist{x,M^{-1}(\yb)}\leq \kappa \dist{\yb,M(x)}\quad \forall x\in U
\end{equation}
with some neighborhood $U$of $\xb$ and a positive real $\kappa>0$.

The metric subregularity property was introduced by Ioffe
\cite{Iof79a, Iof00} using the terminology ''regularity at a
point''. The notation ''metric subregularity'' was suggested in
\cite{DoRo04}.  It is well known
\cite{DoRo04} that metric subregularity of $M$ at $(\xb,\yb)$ is
equivalent to {\em calmness} of the inverse multifunction $M^{-1}$
at $(\yb,\xb)$. It seems to be that the concept of calmness of a
set-valued map first appear in Ye and Ye \cite{YeYe97} under the
term ''pseudo upper-Lipschitz continuty''. Criteria for subregularity and calmness,
respectively, can be found e.g. in the papers
\cite{FabHenKrOut10, Gfr11, Gfr14a, HenJou02, HenJouOut02, HenOut01, HenOut05, IofOut08, Kum09, ZhNg07, ZhNg10}. An important subclass of
multifunctions which are known to be metrically subregular  at
every point of its graph, is given by polyhedral multifunctions,
i.e. multifunctions whose graph is the union of finitely many
polyhedral sets. This result is due to Robinson \cite{Rob81}.  An
important special case of polyhedral multifunctions is given by
linear systems, where subregularity is a consequence of Hoffman's
error bound \cite{Hoff52}, whereas, as pointed out in the example above, metric regularity is equivalent to MFCQ.

We consider also the following concept of mixed metric regularity/subregularity for multifunctions $M$ composed by two multifunctions $M_i:\R^n\rightrightarrows \R^{m_i}$, $i=1,2$, i.e. $M$ has the form
\[M=(M_1,M_2):\R^n\to\R^{m_1}\times\R^{m_2},\ M(x)=M_1(x)\times M_2(x).\]
We say that $M=(M_1,M_2)$ is {\em mixed metrically regular/subregular} at a point $(\xb,(\yb_1,\yb_2))\in \Gr M$, if there are neighborhoods $U$ of $\xb$ and $V_1$ of $\yb_1$ such that
\[\dist{x,M^{-1}(y_1,\yb_2)}\leq \kappa \dist{(y_1,\bar y_2),M(x)}\quad \forall (x,y_1)\in U\times V_1.\]
Clearly, mixed metric regularity/subregularity of $(M_1,M_2)$ implies metric subregularity of $M$.
\begin{theorem}\label{ThMixedRegSubreg}
Let $M_i:\R^n\rightrightarrows\R^{m_i}$, $M_i(x)=F_i(x)-\Omega_i$, $i=1,2$ be two multifunctions, where $F_i:\R^n\to\R^{m_i}$ is continuously differentiable and $\Omega_i\subset\R^{m_i}$ is the union of finitely many convex polyhedra and let $F_i(\xb)\in\Omega_i$. Assume that $M_2$ is metrically subregular at $(\xb,0)$ and that
\[\nabla F_1(\xb)^T\lambda_1+\nabla F_2(\xb)^T\lambda_2=0,\ \lambda_i\in N(F_i(\xb);\Omega_i), i=1,2 \ \Rightarrow\ \lambda_1=0.\]
Then the multifunction $M=(M_1,M_2)$ is mixed regular/subregular at $(\xb,(0,0))$
\end{theorem}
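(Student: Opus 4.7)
The proof is by contradiction. Assuming the estimate fails, I extract sequences $x_k\to\xb$, $y_{1,k}\to 0$ with $\alpha_k:=\dist{x_k,M^{-1}(y_{1,k},0)}>k\,\beta_k$, where $\beta_k:=\dist{F_1(x_k)-y_{1,k},\Omega_1}+\dist{F_2(x_k),\Omega_2}$. Closedness of $\Omega_1,\Omega_2$ forces $\beta_k>0$, and continuity of $F_1,F_2$ gives $\beta_k\to 0$. The metric subregularity of $M_2$ with modulus $\kappa_2$ then supplies $z_k\in M_2^{-1}(0)$ with $\|z_k-x_k\|\le 2\kappa_2\dist{F_2(x_k),\Omega_2}\le 2\kappa_2\beta_k$, and a Lipschitz estimate for $F_1$ yields $\varphi_k(z_k)\le C\beta_k$ for $\varphi_k(x):=\dist{F_1(x)-y_{1,k},\Omega_1}$ and some uniform constant $C$.

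\textbf{Localized Ekeland step.} I apply Ekeland's variational principle to the continuous function $\varphi_k$ on the closed complete set $M_2^{-1}(0)$ with radius $\lambda_k:=\min(\alpha_k/3,\sqrt{\beta_k})$. This produces $\tilde z_k\in M_2^{-1}(0)$ with $\|\tilde z_k-z_k\|\le\lambda_k$ that minimizes $\varphi_k(x)+\rho_k\|x-\tilde z_k\|$ over $M_2^{-1}(0)$, where $\rho_k=C\beta_k/\lambda_k\to 0$ and $\tilde z_k\to\xb$ since $\lambda_k\to 0$. A short triangle-inequality argument rules out the case $\varphi_k(\tilde z_k)=0$: in both subcases of the $\min$, combining $\tilde z_k\in M^{-1}(y_{1,k},0)$ with $\|x_k-\tilde z_k\|\le 2\kappa_2\beta_k+\lambda_k$ contradicts $\alpha_k>k\beta_k$ for $k$ large. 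Hence $\varphi_k(\tilde z_k)>0$.

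\textbf{Multiplier extraction and limit.} Let $q_k$ be a projection of $F_1(\tilde z_k)-y_{1,k}$ onto $\Omega_1$ and set $\eta_k:=(F_1(\tilde z_k)-y_{1,k}-q_k)/\varphi_k(\tilde z_k)$; the proximal-normal property gives $\|\eta_k\|=1$ and $\eta_k\in\hat N(q_k;\Omega_1)$, while $\nabla F_1(\tilde z_k)^T\eta_k$ is a Fr\'echet subgradient of $\varphi_k$ at $\tilde z_k$. The calmness/subregularity-based Lagrange multiplier rule, applied to the minimization of $\varphi_k+\rho_k\|\cdot-\tilde z_k\|$ on $M_2^{-1}(0)$, produces $\mu_k\in N(F_2(\tilde z_k);\Omega_2)$ satisfying
\[\bigl\|\nabla F_1(\tilde z_k)^T\eta_k+\nabla F_2(\tilde z_k)^T\mu_k\bigr\|\le\rho_k,\qquad \|\mu_k\|\le\kappa_2(L_1+1),\]
where $L_1$ is a local Lipschitz constant of $F_1$. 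Passing to convergent subsequences, $q_k\to F_1(\xb)$, $\eta_k\to\bar\eta$ with $\|\bar\eta\|=1$, $\mu_k\to\bar\mu$, and the limits satisfy $\bar\eta\in N(F_1(\xb);\Omega_1)$, $\bar\mu\in N(F_2(\xb);\Omega_2)$, and $\nabla F_1(\xb)^T\bar\eta+\nabla F_2(\xb)^T\bar\mu=0$ with $\bar\eta\ne 0$. This contradicts the qualification condition, completing the proof.

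\textbf{Main obstacle.} The delicate step is the multiplier extraction when only metric \emph{sub}regularity of $M_2$ is assumed: the set $M_2^{-1}(0)$ need not satisfy any classical constraint qualification, so the usual Fr\'echet/Mordukhovich chain rules do not apply directly. The uniform bound $\|\mu_k\|\le\kappa_2(L_1+1)$ is derived via the calmness/exact-penalty form of the Lagrange rule, exploiting the subregularity modulus $\kappa_2$ together with the Lipschitz constant $L_1$ of the penalized objective. This is where the assumption on $M_2$ is used in an essential way, beyond merely producing the auxiliary feasible point $z_k$, and it is what makes the qualification condition require only the $\lambda_1$-component to vanish.
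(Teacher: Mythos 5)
Your argument is correct in its architecture, but it takes a genuinely different route from the paper: the paper's proof is a two-line reduction, writing $M(x)=(F_1(x),F_2(x))+S(x)$ with $S(x)=(-\Omega_1)\times(-\Omega_2)$ and invoking \cite[Lemma 2.4, Theorem 4.3]{Gfr13b}, whose coderivative criterion for mixed regularity/subregularity specializes (after computing $D^\ast S$) to exactly the stated multiplier condition. You instead re-derive the criterion from scratch in this polyhedral setting: negate the estimate, use subregularity of $M_2$ to project $x_k$ onto $M_2^{-1}(0)$, run Ekeland on $M_2^{-1}(0)$ for the penalized distance $\varphi_k$, rule out $\varphi_k(\tilde z_k)=0$ by the $\min(\alpha_k/3,\sqrt{\beta_k})$ radius choice (both subcases check out), and extract multipliers via exact penalization of the constraint $F_2(x)\in\Omega_2$ --- which is legitimate because subregularity of $M_2$ at $(\xb,0)$ persists, with the same modulus, at the nearby points $\tilde z_k\in M_2^{-1}(0)$. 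Your diagnosis of where subregularity of $M_2$ enters beyond producing $z_k$, namely in the uniform bound $\norm{\mu_k}\leq\kappa_2(L_1+1)$ that permits passing to the limit and explains why only $\lambda_1$ must vanish, is exactly right and is the substantive content of the cited theorem. What the paper's route buys is brevity and reuse of a general Banach-space result; what yours buys is a self-contained finite-dimensional proof whose mechanism is visible.

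One local inaccuracy to repair: since $\Omega_1$ is a \emph{union} of polyhedra, the point $w_k:=F_1(\tilde z_k)-y_{1,k}\notin\Omega_1$ may have several projections, and then the Fr\'echet subdifferential of $\dist{\cdot,\Omega_1}$ at $w_k$ is empty, so your chosen $\eta_k=(w_k-q_k)/\varphi_k(\tilde z_k)$ need not yield a Fr\'echet subgradient of $\varphi_k$ at $\tilde z_k$. Run the Fermat/sum/chain rules with \emph{limiting} subdifferentials instead: in finite dimensions $\partial\dist{\cdot,\Omega_1}(w_k)\subset\{(w_k-q)/\dist{w_k,\Omega_1}\mv q\in\Pi_{\Omega_1}(w_k)\}$, so the rule itself selects some projection $q_k$ with $\eta_k\in\hat N(q_k;\Omega_1)$, $\norm{\eta_k}=1$, and the remainder of your limit argument (outer semicontinuity of the limiting normal cones along $q_k\to F_1(\xb)$ and $F_2(\tilde z_k)\to F_2(\xb)$) goes through unchanged.
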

\begin{proof}
Consider the multifunction $S:\R^n\rightrightarrows \R^{m_1}\times\R^{m_2}$, $S(x)=(-\Omega_1)\times(-\Omega_2)$. Since $F_1$ is  continuously differentiable, it is also Lipschitz near $\xb$ and therefore $M_1$ has the Aubin property near $(\xb,0)$. In finite dimensions every linear operator has closed range. Hence we can invoke \cite[Lemma 2.4]{Gfr13b} together with \cite[Theorem 4.3]{Gfr13b} to obtain that the condition
\[0\in \nabla F_1(\xb)^T\lambda_1+\nabla F_2(\xb)^T\lambda_2 +D^\ast S(\xb,(-F_1(\xb),-F_2(\xb)))(\lambda_1,\lambda_2)\ \Rightarrow\ \lambda_1=0\]
is sufficient for mixed regularity/subregularity of $M$. Since
\[D^\ast S(\xb,(-F_1(\xb),-F_2(\xb)))(\lambda_1,\lambda_2)=\begin{cases}
\{0\}&\mbox{if $\lambda_i\in N(F_i(\xb);\Omega_i)$, $i=1,2$,}\\\emptyset&\mbox{else,}
\end{cases}\]
the assertion follows.
\end{proof}

For our analysis we also need the notion of directional metric subregularity. To define this property it is convenient to introduce  the following neighborhoods of directions:
Given   a direction $u\in \R^n$ and positive numbers $\rho,\delta>0$, the set $V_{\rho,\delta}(u)$,
 is given by
\begin{equation}
\label{EqDefNbhd}V_{\rho,\delta}(u):=\{z\in\rho \B_{\R^n}\mv
\big\Vert \norm{u} z- \norm{z} u \big\Vert\leq\delta \norm{z}\
\norm{u}\}.
\end{equation}
This can also be written in the form
\[V_{\rho,\delta}(u)=\begin{cases}\{0\}\cup \big\{z\in\rho \B_{\R^n}\setminus \{0\}\mv
\left\Vert \frac z{\norm{z}} - \frac u{\norm{u}} \right\Vert\leq\delta\big\}&\mbox{if $u\not=0$,}\\
\rho \B_{\R^n} &\mbox{if $u=0$.}\end{cases}\]

Given $u\in \R^n$, the multifunction $M:\R^n\rightrightarrows\R^m$ is said to be
{\em metrically subregular in direction $u$} at $(\xb,\yb)\in\Gr M$, if there are  positive reals $\rho>0,\delta>0$  and $\kappa>0$
such that
\begin{equation}
\label{EqDirSubReg} \dist{x,M^{-1}(\yb)}\leq \kappa \dist{\yb,M(x)}
\end{equation}
holds for all $x\in \xb+V_{\rho,\delta}(u)$.

Note that metric subregularity in direction $0$ is equivalent to the property of metric subregularity.

\begin{theorem}\label{ThDirSubReg}
Let $M_i:\R^n\rightrightarrows\R^{m_i}$, $M_i(x)=F_i(x)-\Omega_i$, $i=1,2$ be two multifunctions, where $F_i$ is continuously differentiable and $\Omega_i$ is the union of finitely many convex polyhedra and let $F_i(\xb)\in\Omega_i$. Further, given $u\in\R^n$ assume that $M_2$ is metrically subregular in direction $u$ at $(\xb,0)$.
\begin{enumerate}
\item If
\[\nabla F_1(\xb)^T\lambda_1+\nabla F_2(\xb)^T\lambda_2=0,\ \lambda_i\in N(F_i(\xb);\Omega_i;\nabla F_i(\xb)u), i=1,2 \ \Rightarrow\ \lambda_1=0,\]
then the multifunction $M=(M_1,M_2)$ is metrically subregular in direction $u$ at $(\xb,(0,0))$.
\item Assume that $F_i$, $i=1,2$ are  twice Fr\'echet differentiable at $\xb$ and $u\not=0$. If
\[u^T\nabla^2(\lambda_1^T F_1+ \lambda_2^T F_2)(\xb)u<0\]
holds for all $(\lambda_1,\lambda_2)\in N(F_1(\xb);\Omega_1;\nabla F_1(\xb)u)\times N(F_2(\xb);\Omega_2;\nabla F_2(\xb)u)$ with $\nabla F_1(\xb)^T\lambda_1+\nabla F_2(\xb)^T\lambda_2=0$ and $\lambda_1\not=0$,  then the multifunction $M=(M_1,M_2)$ is metrically subregular in direction $u$ at $(\xb,(0,0))$.
\end{enumerate}
\end{theorem}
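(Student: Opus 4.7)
The plan is to apply the directional coderivative characterizations of metric subregularity developed in \cite{Gfr13a, Gfr13b} to the combined multifunction $M=(M_1,M_2)$, in direct analogy with the proof of Theorem \ref{ThMixedRegSubreg}. I first compute the relevant directional coderivatives: from the graphical representation $(x,y)\in\Gr M_i \Leftrightarrow F_i(x)-y\in\Omega_i$ together with a directional chain rule, one obtains $D^*M_i((\xb,0);(u,0))(\lambda_i)=\{\nabla F_i(\xb)^T\lambda_i\}$ if $\lambda_i\in N(F_i(\xb);\Omega_i;\nabla F_i(\xb)u)$, and $\emptyset$ otherwise; moreover $M_1$ inherits the Aubin property near $(\xb,0)$ from the Lipschitz continuity of $F_1$.

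For Part 1, the assumed first-order condition is precisely the directional coderivative qualification
\[0\in\nabla F_1(\xb)^T\lambda_1+D^*M_2((\xb,0);(u,0))(\lambda_2),\ \lambda_1\in N(F_1(\xb);\Omega_1;\nabla F_1(\xb)u)\ \Rightarrow\ \lambda_1=0,\]
which, combined with the hypothesized directional subregularity of $M_2$ and the Aubin property of $M_1$, implies the directional metric subregularity of $M$ via the directional analog of \cite[Lemma 2.4]{Gfr13b} and \cite[Theorem 4.3]{Gfr13b} already used in Theorem \ref{ThMixedRegSubreg}.

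For Part 2, I would argue by contradiction. If $M$ is not metrically subregular in direction $u$, there is a sequence $x_k=\xb+t_kz_k\to\xb$ along $V_{\rho,\delta}(u)$ with $t_k\downarrow 0$ violating the subregularity estimate with modulus $k$. Applying Ekeland's variational principle to the residual $x\mapsto\dist{(0,0),M(x)}$ and writing down first-order optimality conditions yields nearby approximate-feasibility points $\tilde x_k$ together with Lagrange-type multipliers $(\lambda_1^k,\lambda_2^k)$ with $\lambda_i^k\in N(F_i(\tilde x_k);\Omega_i)$. Normalizing so that $\|\lambda_1^k\|=1$, passing to a subsequence, and exploiting the polyhedrality of $\Omega_i$ so that active indices stabilize, Lemma \ref{LemBasicPropCone} then yields limits $\lambda_i\in N(F_i(\xb);\Omega_i;\nabla F_i(\xb)u)$ with $\lambda_1\neq 0$ and $\nabla F_1(\xb)^T\lambda_1+\nabla F_2(\xb)^T\lambda_2=0$. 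Substituting the second-order Taylor expansion $F_i(x_k)=F_i(\xb)+t_k\nabla F_i(\xb)u+\tfrac{t_k^2}{2}u^T\nabla^2 F_i(\xb)u+o(t_k^2)$ into the appropriate Lagrangian inequality, and using polyhedrality to sharpen $\dist{F_i(x_k),\Omega_i}$ to order $o(t_k^2)$, produces $u^T\nabla^2(\lambda_1^T F_1+\lambda_2^T F_2)(\xb)u\geq 0$, contradicting the hypothesis.

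The hard part is Part 2: guaranteeing that the normalized multipliers limit to a nonzero $\lambda_1$ inside the smaller \emph{directional} normal cones $N(F_i(\xb);\Omega_i;\nabla F_i(\xb)u)$ rather than the full limiting cones $N(F_i(\xb);\Omega_i)$, and matching the $o(t_k^2)$ Taylor residuals against the rate of subregularity violation. Lemma \ref{LemBasicPropCone} and the active-index-stabilization afforded by the polyhedral structure of the $\Omega_i$ are what make this balance tractable.
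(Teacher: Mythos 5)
Your Part 1 is essentially the paper's own argument: both routes reduce the claim to the directional coderivative criterion of \cite[Theorem 4.3]{Gfr13b} combined with the sum rule \cite[Lemma 2.4]{Gfr13b}, the only computation needed being that the relevant directional coderivative of the polyhedral part equals $\nabla F_i(\xb)^T\lambda_i$ when $\lambda_i\in N(F_i(\xb);\Omega_i;\nabla F_i(\xb)u)$ and is empty otherwise; whether one attaches this to $M_i$ directly, as you do, or to the constant multifunction $S(x)=(-\Omega_1)\times(-\Omega_2)$, as the paper does, is immaterial.

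For Part 2 you depart from the paper, which again simply invokes \cite[Theorem 4.3]{Gfr13b} (that theorem already contains the second-order sufficient condition for directional metric subregularity), whereas you attempt to re-derive it from scratch via Ekeland's principle. The outline has the right shape, but it has a genuine gap exactly where you flag ``the hard part'', and flagging is not filling: you never say where the directional metric subregularity of $M_2$ enters your argument, and it must enter precisely to force $\lambda_1\not=0$ in the limit. Ekeland applied to the symmetric residual $x\mapsto\dist{(0,0),M(x)}$ only produces a nonzero pair $(\lambda_1^k,\lambda_2^k)$; after the normalization that keeps the pair bounded, the limit may perfectly well have $\lambda_1=0$, a case about which the hypothesis of Part 2 says nothing, so no contradiction results. (Normalizing by $\norm{\lambda_1^k}=1$ instead presupposes $\lambda_1^k\not=0$ and lets $\lambda_2^k$ escape to infinity.) The standard cure is to use the subregularity of $M_2$ to modify the residual so that a violation of subregularity of $M$ forces the $M_1$-part to dominate --- this is the content of \cite[Lemma 2.4]{Gfr13b}, and neither Lemma \ref{LemBasicPropCone} nor active-index stabilization supplies it. A second, smaller gap: the inequality $\dist{x_k,M^{-1}(0,0)}>k\,\dist{(0,0),M(x_k)}$ gives no a priori relation between the residual and $t_k^2$, so the claimed matching of the $o(t_k^2)$ Taylor remainder against the rate of violation needs an argument (one must first dispose of subsequences along which the first-order criterion of Part 1 already applies). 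As written, Part 2 is a plausible roadmap to the cited result rather than a proof.
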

\begin{proof}
Using similar arguments as in the proof of Theorem \ref{ThMixedRegSubreg}, the assertion follows from \cite[Theorem 4.3]{Gfr13b} together with \cite[Lemma 2.4]{Gfr13b} by taking into account
\begin{eqnarray*}\lefteqn{D^\ast S((\xb,(-F_1(\xb),-F_2(\xb))); (u, (-\nabla F_1(\xb)u,-\nabla F_2(\xb)u)) )(\lambda_1,\lambda_2)}\\
&=&\begin{cases}
\{0\}&\mbox{if $\lambda_i\in N(F_i(\xb);\Omega_i;\nabla F_i(\xb)u)$, $i=1,2$,}\\\emptyset&\mbox{else,}
\end{cases}\end{eqnarray*}
\end{proof}

Characterization of directional metric subregularity also yields a characterization of metric subregularity:
\begin{lemma}
Let $M:\R^n\rightrightarrows \R^m$ be a multifunction and $(\xb,\yb)\in\Gr M$. Then $M$ is metrically subregular at $(\xb,\yb)$ if and only if it is metrically subregular in every direction $u\not=0$ at $(\xb,\yb)$.
\end{lemma}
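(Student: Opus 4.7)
The forward implication is essentially a matter of restricting the neighborhood. If $M$ is metrically subregular at $(\xb,\yb)$, then \eqref{EqSubReg0} holds on some $U=\xb+\rho_0\B_{\R^n}$ with constant $\kappa$. For any direction $u\neq 0$ and any $\delta>0$ we have $V_{\rho,\delta}(u)\subset \rho\B_{\R^n}$ by the definition \eqref{EqDefNbhd}, so choosing $\rho\leq\rho_0$ gives $\xb+V_{\rho,\delta}(u)\subset U$ and \eqref{EqDirSubReg} holds with the same $\kappa$. Thus directional subregularity in every nonzero direction is automatic.

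The reverse implication is the substantive part, and I would prove it by contradiction together with a compactness argument on the unit sphere. Suppose $M$ is directionally subregular at $(\xb,\yb)$ in every direction $u\neq 0$, but not subregular at $(\xb,\yb)$. Negating \eqref{EqSubReg0}, for each integer $k\geq 1$ there exists $x_k$ with $\norm{x_k-\xb}\leq 1/k$ and
\[
\dist{x_k,M^{-1}(\yb)}>k\,\dist{\yb,M(x_k)}.
\]
In particular $x_k\notin M^{-1}(\yb)$, so $x_k\neq\xb$. Set $u_k:=(x_k-\xb)/\norm{x_k-\xb}$; by compactness of the unit sphere, passing to a subsequence we may assume $u_k\to u$ with $\norm{u}=1$, so $u\neq 0$.

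Now apply the hypothesis in direction $u$: there exist $\rho,\delta,\kappa>0$ such that \eqref{EqDirSubReg} holds on $\xb+V_{\rho,\delta}(u)$. Since $u_k\to u$ and $\norm{x_k-\xb}\to 0$, for all $k$ sufficiently large we have $\norm{u_k-u}\leq \delta$ and $\norm{x_k-\xb}\leq \rho$, so writing $x_k-\xb=\norm{x_k-\xb}\,u_k$ and using the second description of $V_{\rho,\delta}(u)$ in the excerpt shows $x_k-\xb\in V_{\rho,\delta}(u)$. Hence $\dist{x_k,M^{-1}(\yb)}\leq \kappa\,\dist{\yb,M(x_k)}$, which combined with the earlier strict inequality forces $k<\kappa$ (if $\dist{\yb,M(x_k)}>0$) or $\dist{x_k,M^{-1}(\yb)}=0$ (if $\dist{\yb,M(x_k)}=0$); either way we obtain a contradiction for $k$ large.

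The only mildly delicate point is verifying the membership $x_k-\xb\in V_{\rho,\delta}(u)$ from the explicit definition \eqref{EqDefNbhd}, but this is direct: with $z=x_k-\xb$ one has $\norm{z}u-\norm{u}z=\norm{z}(u-u_k)$ (using $\norm{u}=1$), whose norm is $\norm{z}\,\norm{u-u_k}\leq \delta\norm{z}\norm{u}$ once $\norm{u_k-u}\leq\delta$. I do not anticipate a serious obstacle; the argument is just a standard ``rescaling/compactness'' reduction to the directional setting.
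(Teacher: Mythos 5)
Your proof is correct and follows essentially the same route as the paper's: the nontrivial direction is handled by taking a sequence $(x_k)$ witnessing the failure of subregularity, normalizing $u_k=(x_k-\xb)/\norm{x_k-\xb}$, extracting a convergent subsequence $u_k\to u$ with $\norm{u}=1$, and observing that $x_k\in\xb+V_{\rho,\delta}(u)$ for large $k$, which contradicts (in the paper, contrapositively establishes the failure of) metric subregularity in direction $u$. The additional details you supply — the explicit verification of membership in $V_{\rho,\delta}(u)$ and the case split on whether $\dist{\yb,M(x_k)}$ vanishes — are correct and merely make explicit what the paper leaves implicit.
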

\begin{proof}
The ''only if''-part is obviously true by the definition. We prove the if-part by contraposition. Assume that $M$ is not metrically subregular at $(\xb,\yb)$. Then we can find a sequence $(x_k)\to \xb$ satisfying $\dist{x_k,M^{-1}(\yb)}> k\dist{\yb,M(x_k)}$ for all $k$. Since $\xb\in M^{-1}(\yb)$ we conclude $x_k\not=\xb$ and therefore $u_k=\frac{x_k-\xb}{\norm{x_k-\xb}}$ is well defined. By eventually passing to a subsequence, we can assume that the sequence $(u_k)$ converges to some element $u\in\R^n$ with $\norm{u}=1$. Now let $\rho>0$ and $\delta>0$ be arbitrarily fixed. Then for all $k$ sufficiently large we have $x_k\in\xb+\rho \B_{\R^n}$ and
\[\Bigl\Vert \frac{x_k-\xb}{\norm{x_k-\xb}}-\frac u{\norm{u}}\Bigr\Vert=\norm{u_k-u}\leq\delta,\]
showing $x_k\in\xb+V_{\rho,\delta}(u)$. Hence $M$ is not metrically subregular in direction $u$.
\end{proof}

\section{Optimality conditions for the disjunctive program}

Now we apply the results of the preceding section to the problem \eqref{EqMPDC}.
We  denote the feasible region of \eqref{EqMPDC} by ${\cal F}$ and for a feasible point $\xb\in{\cal F}$ we define the {\em linearized cone } by
\[T_{\rm lin}(\xb):= \{u\in\R^n\mv \nabla F(\xb)u\in T(F(\xb);\Omega)\}\]
and the {\em cone of critical directions }  by
\[{\cal C}(\xb):=\{u\in T_{\rm lin}(\xb)\mv \nabla f(\xb) u\leq 0\}.\]
Note that always $0\in{\cal C}(\xb)$ and that $T(\xb;{\cal F})\subset T_{\rm lin}(\xb)$.

Throughout this section we assume that for every $u\in T_{\rm lin}(\xb)$ the constraint mapping $M(x)=F(x)-\Omega$ can be split into two parts $M=(M_1,M_2):\R^n\rightrightarrows \R^{m_1}\times\R^{m_2}$  with $M_i(x)=F_i(x)-\Omega_i$ and $m=m_1+m_2$,
where for each $i\in\{1,2\}$ the mapping $F_i:\R^n\to\R^{m_i}$ is continuously differentiable, $\Omega_i\subset\R^{m_i}$ is the union of finitely many convex polyhedra, $F=(F_1,F_2)$, $\Omega=\Omega_1\times\Omega_2$ and $M_2$ is  metrically subregular in direction $u$ at the  point $(\xb,0)$ with modulus $\kappa_2(u)$.

This assumption is e.g. automatically fulfilled if $F_2$ is affine linear, because then $M_2$ is a polyhedral multifunction and therefore metrically subregular at every point of its graph \cite{Rob81}. If we cannot identify some part of the multifunction which is metrically subregular in the considered direction $u$ then we can simply take $m_2=0$. Note that this splitting is not unique and to ease the notation we also suppress the dependence on $u$.

To state our optimality conditions in a general framework we consider for arbitrary  $\eta\in\R^n$ and $\xb\in {\cal F}$ the multifunction ${\cal M}^{\eta,\xb}:=({\cal M}_1^{\eta,\xb},{\cal M}_2):\R^n\rightrightarrows\R\times\R^m$ given by
\[ {\cal M}_1^{\eta,\xb}(x)=f(x)-f(\xb)+(\eta^T(x-\xb))^3-\R_-,\ {\cal M}_2(x):=M(x).\]

\begin{proposition}\label{PropOptCond}
Let $\xb$ be a local minimizer for \eqref{EqMPDC}. Then ${\cal M}^{0,\xb}$ is not mixed regular/subregular at $(\xb,0)$ and for every nonzero critical direction $0\not=u\in{\cal C}(\xb)$ there exists some $\eta$ such that ${\cal M}^{\eta,\xb}$ is not metrically subregular in direction $u$.
\end{proposition}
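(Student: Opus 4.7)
For the first assertion I would argue by contraposition. If ${\cal M}^{0,\xb}$ were mixed metrically regular/subregular at $(\xb,(0,0))$, then plugging $x=\xb$ and the perturbation $y_1=-t$ with $t\downarrow 0$ into the defining inequality would give
$\dist{\xb,({\cal M}^{0,\xb})^{-1}(-t,0)}\leq\kappa\dist{(-t,0),{\cal M}^{0,\xb}(\xb)}\leq\kappa t$,
where the last step only uses $(0,0)\in{\cal M}^{0,\xb}(\xb)$, which holds because $F(\xb)\in\Omega$. This would furnish a feasible point $x_t$ with $\norm{x_t-\xb}\leq\kappa t$ and $f(x_t)\leq f(\xb)-t<f(\xb)$; since $x_t\to\xb$, this contradicts the local minimality of $\xb$.

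For the second assertion I will show that the explicit choice $\eta:=u$ already forces ${\cal M}^{u,\xb}$ to fail to be metrically subregular in direction $u$. Suppose instead the subregularity holds with constants $\rho,\delta,\kappa>0$, and probe the estimate along the ray $x_t:=\xb+tu$ with $t\downarrow 0$; for $t\leq\rho/\norm{u}$ we have $x_t\in\xb+V_{\rho,\delta}(u)$, since the directional condition in \eqref{EqDefNbhd} is trivially satisfied when $x_t-\xb$ is a positive multiple of $u$. Criticality of $u$ supplies $\nabla f(\xb)u\leq 0$ and $\nabla F(\xb)u\in T(F(\xb);\Omega)$; because $\Omega$ is a finite union of convex polyhedra, this tangent vector lies in $T(F(\xb);P_{i_0})$ for some component $P_{i_0}$ containing $F(\xb)$, and polyhedrality of $P_{i_0}$ then yields $F(\xb)+t\nabla F(\xb)u\in P_{i_0}\subset\Omega$ for all small $t\geq 0$. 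The $C^1$-expansion $F(x_t)=F(\xb)+t\nabla F(\xb)u+o(t)$ therefore gives $\dist{F(x_t),\Omega}=o(t)$, while $f(x_t)-f(\xb)+(u^T(x_t-\xb))^3=t\nabla f(\xb)u+t^3\norm{u}^6+o(t)$ has positive part $o(t)$ because $\nabla f(\xb)u\leq 0$ and $t^3=o(t)$. Thus the full residual is $o(t)$, and metric subregularity in direction $u$ produces $\tilde x_t\in({\cal M}^{u,\xb})^{-1}(0,0)$ with $\norm{\tilde x_t-x_t}=o(t)$.

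The contradiction now comes from pitting two bounds on $u^T(\tilde x_t-\xb)$ against each other. Membership of $\tilde x_t$ in the preimage means $\tilde x_t$ is feasible and $f(\tilde x_t)-f(\xb)+(u^T(\tilde x_t-\xb))^3\leq 0$; local minimality of $\xb$ (applicable since $\tilde x_t\to\xb$) forces $f(\tilde x_t)\geq f(\xb)$, hence $(u^T(\tilde x_t-\xb))^3\leq 0$ and consequently $u^T(\tilde x_t-\xb)\leq 0$. On the other hand, $u^T(\tilde x_t-\xb)=t\norm{u}^2+u^T(\tilde x_t-x_t)=t\norm{u}^2+o(t)$, which is strictly positive for small $t>0$ since $u\neq 0$, a contradiction. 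I expect the only delicate point to be the estimate $\dist{F(x_t),\Omega}=o(t)$, which is precisely where the hypothesis that $\Omega$ is a union of convex polyhedra enters; a nonpolyhedral $\Omega$ need not accommodate a full segment along a mere tangent direction. The \emph{raison d'\^etre} of the choice $\eta=u$ is that the cubic term $(u^T(x-\xb))^3$ contributes only $O(t^3)=o(t)$ to the residual---so it does not spoil the feasibility estimate---while still yielding the decisive sign $u^T(\tilde x_t-\xb)\leq 0$ from the preimage inclusion.
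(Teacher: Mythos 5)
Your proof is correct. It differs from the paper in an essential way: the paper disposes of this proposition in one line by citing Proposition 5.1 of \cite{Gfr13b}, whereas you give a direct, self-contained argument tailored to the finite-dimensional, polyhedral setting of \eqref{EqMPDC}. Both halves of your argument are sound: for the mixed regularity/subregularity part, perturbing only the objective component by $y_1=-t$ and using $(0,0)\in{\cal M}^{0,\xb}(\xb)$ does produce feasible points $x_t\to\xb$ with $f(x_t)\le f(\xb)-t$, contradicting local minimality; and for the directional part, the explicit choice $\eta=u$ works because the residual along $x_t=\xb+tu$ is $o(t)$ (the tangent direction $\nabla F(\xb)u$ yields an actual feasible segment in some polyhedral component $P_{i_0}$, and the cubic term contributes only $O(t^3)$), while any $\tilde x_t\in({\cal M}^{u,\xb})^{-1}(0,0)$ within $o(t)$ of $x_t$ must satisfy $u^T(\tilde x_t-\xb)\le 0$ by local minimality, contradicting $u^T(\tilde x_t-\xb)=t\norm{u}^2+o(t)>0$. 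What your approach buys is transparency: it exhibits a concrete $\eta$ and makes visible exactly where polyhedrality of $\Omega$ enters (the step $\dist{F(x_t),\Omega}=o(t)$, which you correctly flag as the delicate point), and it avoids importing the Banach-space machinery of \cite{Gfr13b}. What the citation buys the paper is generality --- the cited proposition covers nonsmooth data and general closed $\Omega$ --- and consistency with the framework used throughout Section 2. Two minor points you may wish to make explicit: the preimages $({\cal M}^{\eta,\xb})^{-1}(\cdot)$ are nonempty and closed (they contain $\xb$ when the right-hand side is $(0,0)$, and closedness lets you pick a nearest point), so the distance estimates really do furnish the points $x_t$ and $\tilde x_t$ you use.
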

\begin{proof}
Follows from \cite[Proposition 5.1]{Gfr13b}.
\end{proof}

We define  the  {\em generalized Lagrangian} ${\cal L}:\R^n\times\R\times \R^{m}\to \R$ by
\[{\cal L}(x,\lambda_0,\lambda):=\lambda_0f(x)+\lambda^TF(x)\]
Given $\xb\in {\cal F}$, $u\in T_{\rm lin}(\xb)$ and $\lambda_0\geq 0$, we define the sets of multipliers
\[\Lambda^{\lambda_0}(\xb;u):=\left\{\lambda=(\lambda_1,\lambda_2)\in\R^{m_1}\times\R^{m_2}\mv \begin{array}{l}
\lambda\in N(F(\xb);\Omega;\nabla F(\xb)u),\\
\nabla_x{\cal L}(\xb,\lambda_0,\lambda)=0,\\
 \lambda_0+\norm{\lambda_1}\not=0\end{array}\right\}\]
and
\[\hat\Lambda^{\lambda_0}(\xb;u):=\left\{\lambda=(\lambda_1,\lambda_2)\in\R^{m_1}\times\R^{m_2}\mv \begin{array}{l}
\lambda\in\hat N(\nabla F(\xb)u;T(F(\xb);\Omega)),\\
\nabla_x{\cal L}(\xb,\lambda_0,\lambda)=0,\\
 \lambda_0+\norm{\lambda_1}\not=0\end{array}\right\}.\]
If $u=0$ we set
$\Lambda^{\lambda_0}(\xb):=\Lambda^{\lambda_0}(\xb;0)$,
$\hat\Lambda^{\lambda_0}(\xb):=\hat\Lambda^{\lambda_0}(\xb;0)$.

We see from the definition that the splitting $M=(M_1,M_2)$ only
influences the sets $\Lambda^0(\xb;u)$ and $\hat\Lambda^0(\xb;u)$
by the requirement that certain components of the multipliers are
not all zero.

The following lemma gives some relations between these multiplier
sets.
\begin{lemma}\label{LemRelationsMult}
For every $\lambda^0\geq 0$ and every critical direction
$u\in{\cal C}(\xb)$ we have
\[\hat\Lambda^{\lambda_0}(\xb)\subset\hat\Lambda^{\lambda_0}(\xb;u)\subset\Lambda^{\lambda_0}(\xb;u)\subset
\Lambda^{\lambda_0}(\xb)\] and equality holds, if $\bar p=1$,
i.e. $\Omega$ is a convex polyhedron.
\end{lemma}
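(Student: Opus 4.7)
I will verify the chain of four multiplier sets inclusion by inclusion, then argue equality in the polyhedral special case. The recurring tool is the following orthogonality observation: for any $\lambda=(\lambda_1,\lambda_2)\in\hat\Lambda^{\lambda_0}(\xb)$ and any $u\in{\cal C}(\xb)$, the identity $\lambda^T\nabla F(\xb)u=0$ holds. Indeed, $\lambda\in\hat N(F(\xb);\Omega)$ combined with $\nabla F(\xb)u\in T(F(\xb);\Omega)$ (which holds because $u\in T_{\rm lin}(\xb)$) yields $\lambda^T\nabla F(\xb)u\leq 0$ from the defining inequality of the Fr\'echet normal cone, while the stationarity identity $\nabla F(\xb)^T\lambda=-\lambda_0\nabla f(\xb)$ paired with $\lambda_0\geq 0$ and $\nabla f(\xb)u\leq 0$ yields the reverse inequality.

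\textbf{The three inclusions.} Write $w:=\nabla F(\xb)u$. For $\hat\Lambda^{\lambda_0}(\xb)\subset\hat\Lambda^{\lambda_0}(\xb;u)$, take $\lambda\in\hat\Lambda^{\lambda_0}(\xb)$: the orthogonality $\lambda^Tw=0$ together with inclusion \eqref{EqInclRegNormalCone} of Lemma~\ref{LemInclNormalCone} places $\lambda$ in $\hat N(w;T(F(\xb);\Omega))$, so $\lambda\in\hat\Lambda^{\lambda_0}(\xb;u)$. For $\hat\Lambda^{\lambda_0}(\xb;u)\subset\Lambda^{\lambda_0}(\xb;u)$, I first note the elementary bound $\hat N(w;T(F(\xb);\Omega))\subset\bigl(T(w;T(F(\xb);\Omega))\bigr)^\circ$: given $\xi$ in the left-hand set and $v$ in the right-hand tangent cone, choose $t_k\downarrow 0$ and $v_k\to v$ with $w+t_kv_k\in T(F(\xb);\Omega)$; applying the Fr\'echet inequality along $w+t_kv_k\to w$ inside $T(F(\xb);\Omega)$ forces $\xi^Tv\leq 0$. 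Since $T(w;T(F(\xb);\Omega))$ is a closed cone containing the origin, its polar coincides with $\hat N(0;T(w;T(F(\xb);\Omega)))$, and \eqref{EqInclNormalCone2} with $\bar v=0$ embeds this polar into $N(F(\xb);\Omega;w)$. The outermost inclusion $\Lambda^{\lambda_0}(\xb;u)\subset\Lambda^{\lambda_0}(\xb)$ is immediate from the general fact $N(F(\xb);\Omega;w)\subset N(F(\xb);\Omega)$ recalled in Section~2.

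\textbf{Equality when $\bar p=1$.} If $\Omega$ is a single convex polyhedron then the Mordukhovich and Fr\'echet normal cones coincide on it, which gives $\hat\Lambda^{\lambda_0}(\xb)=\Lambda^{\lambda_0}(\xb)$; moreover, the concluding equality clause of Lemma~\ref{LemInclNormalCone} yields $\hat N(w;T(F(\xb);\Omega))=N(F(\xb);\Omega;w)=\{\xi\in\hat N(F(\xb);\Omega):\xi^Tw=0\}$, whence $\hat\Lambda^{\lambda_0}(\xb;u)=\Lambda^{\lambda_0}(\xb;u)$. Because every $\lambda\in\Lambda^{\lambda_0}(\xb)=\hat\Lambda^{\lambda_0}(\xb)$ automatically satisfies $\lambda^Tw=0$ by the orthogonality observation, we obtain the missing reverse containment $\Lambda^{\lambda_0}(\xb)\subset\Lambda^{\lambda_0}(\xb;u)$, collapsing the whole chain to equalities. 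The one step not contained verbatim in a previously proved lemma is the sequence argument for $\hat N(w;T(F(\xb);\Omega))\subset(T(w;T(F(\xb);\Omega)))^\circ$ above; this is the main (but essentially routine) obstacle, necessary only because $T(F(\xb);\Omega)$ is a union of convex cones rather than itself convex.
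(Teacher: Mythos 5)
Your proof is correct and follows essentially the same route as the paper: the orthogonality $\lambda^T\nabla F(\xb)u=0$ combined with \eqref{EqInclRegNormalCone} for the first inclusion, \eqref{EqInclNormalCone2} at $\bar v=0$ for the second, monotonicity of the directional limiting normal cone for the third, and the coincidence of regular and limiting normal cones on convex sets to collapse the chain when $\bar p=1$. The only cosmetic difference is that you re-derive the containment $\hat N(\nabla F(\xb)u;T(F(\xb);\Omega))\subset \hat N(0;T(\nabla F(\xb)u;T(F(\xb);\Omega)))$ by a sequence argument, whereas the paper obtains it from the general polarity identity $\hat N(x;\Omega)=T(x;\Omega)^\circ=\hat N(0;T(x;\Omega))$ already recorded in its preliminaries.
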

\begin{proof}
By \eqref{EqInclNormalCone2} we have
\[\hat N(\nabla F(\xb)u; T(F(\xb);\Omega))=\hat N(0;T(\nabla F(\xb)u;T(F(\xb);\Omega)))
\subset N(F(\xb);\Omega;\nabla F(\xb)u)\] and thus
$\hat\Lambda^{\lambda_0}(\xb;u)\subset\Lambda^{\lambda_0}(\xb;u)$.
Since we also have $N(F(\xb);\Omega;\nabla F(\xb)u)\subset
N(F(\xb);\Omega)$ by the definition of the directional limiting
normal cone, the inclusion $\Lambda^{\lambda_0}(\xb;u)\subset
\Lambda^{\lambda_0}(\xb)$ immediately follows.

Now consider $\lambda\in \hat\Lambda^{\lambda_0}(\xb)$. Then
$\lambda\in T(F(\xb);\Omega)^\circ$ and hence $\lambda^T\nabla
F(\xb)u\leq 0$ because of $\nabla F(\xb)u\in T(F(\xb);\Omega)$.
From $u\in{\cal C}(\xb)$,$ \nabla_x{\cal
L}(\xb,\lambda_0,\lambda)=0$ and $\lambda^0\geq 0$ we deduce
$\lambda^T\nabla F(\xb)u =-\lambda_0\nabla f(\xb)u\geq 0$ and thus
$\lambda^T\nabla F(\xb)u=0$. Using the inclusion
\eqref{EqInclRegNormalCone} we obtain  $\lambda\in
\hat\Lambda^{\lambda_0}(\xb;u)$.

The assertion about equality follow immediately from the fact,
that for convex sets $\Omega$ the limiting and the regular normal
cone coincide and hence
$\hat\Lambda^{\lambda_0}(\xb)=\Lambda^{\lambda_0}(\xb).$
\end{proof}

The sets $\Lambda^{\lambda_0}(\xb;u)$ will be used for formulating
necessary optimality conditions, whereas $\hat
\Lambda^{\lambda_0}(\xb;u)$ plays a role when stating sufficient
conditions.

Note that $N(F(\xb);\Omega;\nabla F(\xb)u)=N(F_1(\xb);\Omega_1;\nabla F_1(\xb)u)\times N(F_2(\xb);\Omega_2;\nabla F_2(\xb)u)$ and
that for every critical direction $u$ and every $\lambda_0\geq0$ such that $\Lambda^{\lambda_0}(\xb;u)\not=\emptyset$ we have
\begin{equation}\label{EqLambda0Nabla_f}\lambda_0 \nabla f(\xb)u=-\lambda^T\nabla F(\xb)u=0\ \forall\lambda\in\Lambda^{\lambda_0}(\xb;u)\end{equation}
because of  \eqref{EqInclNormalCone}.

We are now in the position to state our main result on first-order and second-order necessary optimality conditions:

\begin{theorem}\label{ThBaseOptCond}Let $\xb$ be a local minimizer for the problem \eqref{EqMPDC} and let $u\in{\cal C}(\xb)$.  Then there exists $\lambda_0\geq 0$ such that $\Lambda^{\lambda_0}(\xb;u)\not=\emptyset$. If $f$ and $F$ are twice Fr\'echet differentiable at $\xb$ then there exist some $\lambda\in\Lambda^{\lambda_0}(\xb;u)$ with
\begin{equation}\label{EqSecOrdNec}u^T\nabla_x^2{\cal L}(\xb,\lambda_0,\lambda)u\geq 0.
\end{equation}
If $M$ is metrically subregular in direction $u$ at $(\xb,0)$ then these conditions hold with ${\lambda_0}=1$.
\end{theorem}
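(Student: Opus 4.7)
The plan is to convert the optimality of $\xb$ into the failure of suitable regularity/subregularity properties of the augmented multifunction ${\cal M}^{\eta,\xb}$ via Proposition \ref{PropOptCond}, and then to extract the multipliers by contraposing Theorem \ref{ThMixedRegSubreg} and Theorem \ref{ThDirSubReg}. The elementary observation that makes this clean is that for $\phi_\eta(x):=f(x)-f(\xb)+(\eta^T(x-\xb))^3$ one has $\nabla\phi_\eta(\xb)=\nabla f(\xb)$ and, whenever $f$ is twice Fr\'echet differentiable at $\xb$, also $\nabla^2\phi_\eta(\xb)=\nabla^2 f(\xb)$. Hence the cubic perturbation is invisible at the first- and second-order level at $\xb$ and will not perturb the Lagrangian stationarity or the Hessian inequality.

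For $u=0$, Proposition \ref{PropOptCond} asserts that ${\cal M}^{0,\xb}$ is not mixed regular/subregular at $(\xb,0)$. I would regroup its components as $(\tilde M_1,\tilde M_2):=(({\cal M}_1^{0,\xb},M_1),M_2)$; by the $u=0$ instance of the standing assumption, $\tilde M_2 = M_2$ is metrically subregular at $(\xb,0)$. Contraposing Theorem \ref{ThMixedRegSubreg} then delivers $\lambda_0\in N(0;\R_-)=\R_+$ and $\lambda_i\in N(F_i(\xb);\Omega_i)$, $i=1,2$, with $(\lambda_0,\lambda_1)\not=0$ and $\lambda_0\nabla f(\xb)+\nabla F(\xb)^T\lambda=0$, so $\lambda=(\lambda_1,\lambda_2)\in \Lambda^{\lambda_0}(\xb)$. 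For $u\neq 0$, Proposition \ref{PropOptCond} supplies some $\eta\in\R^n$ for which ${\cal M}^{\eta,\xb}$ is not metrically subregular in direction $u$, and the same regrouping together with the contrapositive of Theorem \ref{ThDirSubReg}(1) yields a multiplier $\lambda\in\Lambda^{\lambda_0}(\xb;u)$ with $\lambda_0\ge 0$; the identity $\nabla\phi_\eta(\xb)=\nabla f(\xb)$ is exactly what eliminates $\eta$ from the stationarity relation.

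For the second-order claim I would apply instead the contrapositive of Theorem \ref{ThDirSubReg}(2), whose existential content now additionally contains $u^T\nabla_x^2(\lambda_0\phi_\eta+\lambda^TF)(\xb)u\ge 0$; by $\nabla^2\phi_\eta(\xb)=\nabla^2 f(\xb)$ this is precisely \eqref{EqSecOrdNec} with the same $\lambda_0,\lambda$. Finally, if $M$ itself is metrically subregular in direction $u$ at $(\xb,0)$, I repeat the previous arguments with the coarser splitting $(\tilde M_1,\tilde M_2):=({\cal M}_1^{\eta,\xb},M)$, so that the entire constraint mapping sits in the subregular block; the non-triviality condition then collapses to $\lambda_0\not=0$, and rescaling lets one take $\lambda_0=1$. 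The only real obstacle in the argument is bookkeeping, namely aligning the auxiliary splitting of ${\cal M}^{\eta,\xb}$ with the prescribed $(M_1,M_2)$ decomposition and translating the non-triviality condition delivered by the contrapositives of Theorems \ref{ThMixedRegSubreg}/\ref{ThDirSubReg} into the form $\lambda_0+\norm{\lambda_1}\not=0$ used in the definition of $\Lambda^{\lambda_0}(\xb;u)$.
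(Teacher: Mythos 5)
Your proof is correct and runs on exactly the paper's machinery (Proposition \ref{PropOptCond} plus the contrapositives of Theorems \ref{ThMixedRegSubreg} and \ref{ThDirSubReg}), but you organize the blocks differently, and the difference is worth recording. The paper makes a case distinction: when $M$ is (directionally) subregular it puts the whole constraint map into the subregular slot, i.e.\ works with $({\cal M}_1^{\eta,\xb},M)$, which forces $\lambda_0\not=0$; when it is not, it drops the objective entirely and applies Theorem \ref{ThDirSubReg} to $(M_1,M_2)$ alone, obtaining $\Lambda^0(\xb;u)\not=\emptyset$. You instead use the single regrouping $(({\cal M}_1^{\eta,\xb},M_1),M_2)$, whose contrapositive delivers $(\lambda_0,\lambda_1)\not=(0,0)$ in one stroke --- exactly the non-triviality condition $\lambda_0+\norm{\lambda_1}\not=0$ in the definition of $\Lambda^{\lambda_0}(\xb;u)$ --- so no case split is needed, and the final normalization $\lambda_0=1$ under directional subregularity of $M$ is recovered by reverting to the coarser grouping, as in the paper. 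Two points you use implicitly deserve a sentence: for $u\not=0$ directional metric subregularity is a property of the multifunction ${\cal M}^{\eta,\xb}$ itself, so its failure is insensitive to how you group the components, whereas for $u=0$ the mixed regular/subregular property does depend on the grouping and you need the (easy) implication that your finer mixed property implies the one asserted to fail in Proposition \ref{PropOptCond}, obtained by restricting the perturbation to $y_2=0$ in the $M_1$-slot; and you need the inclusion $N\bigl((0,F_1(\xb));\R_-\times\Omega_1;(\nabla f(\xb)u,\nabla F_1(\xb)u)\bigr)\subset N(0;\R_-;\nabla f(\xb)u)\times N(F_1(\xb);\Omega_1;\nabla F_1(\xb)u)$, which holds because one sequence $(t_k)$ serves both components (the paper invokes the analogous product identity for $\Omega=\Omega_1\times\Omega_2$ just before the theorem). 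Your observations $\nabla\phi_\eta(\xb)=\nabla f(\xb)$ and $\nabla^2\phi_\eta(\xb)=\nabla^2 f(\xb)$ are precisely what the paper uses tacitly, and for $u=0$ the second-order inequality is the trivial $0\geq 0$.
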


\begin{proof}
First consider the case $u=0$: From Proposition \ref{PropOptCond}
we know that $({\cal M}_1^{0,\xb},M)$ is not mixed
regular/subregular at $(\xb,0)$. If $M$ is subregular at
$(\xb,0)$, then it follows from Theorem \ref{ThMixedRegSubreg}
that there exist $0\not=\lambda_0\in N(0;\R_-)$ and
$\lambda=(\lambda_1,\lambda_2)\in
N(F(\xb);\Omega)=N(F_1(\xb);\Omega_1)\times N(F_2(\xb);\Omega_2)$
with $\lambda_0\nabla f(\xb)^T+\nabla F(\xb)^T\lambda=0$. Hence
$\lambda_0>0$ and since $N(F(\xb),\Omega)$ is a cone, it follows
that
$\frac{\lambda}{\lambda_0}=(\frac{\lambda_1}{\lambda_0},\frac{\lambda_2}{\lambda_0})\in
\Lambda^1(\xb)$. If $M=(M_1,M_2)$ is not metrically subregular at
$(\xb,0)$, then it is also not mixed regular/subregular at
$(\xb,0)$ and  by Theorem \ref{ThMixedRegSubreg} we obtain
$\Lambda^0(\xb)\not=\emptyset$. Of course, the second-order
condition \eqref{EqSecOrdNec} is trivially fulfilled for $u=0$.

Now let $u\not=0$. If $M$ is metrically subregular in direction
$u$  at $(\xb,0)$, we choose $\eta$ such that $({\cal
M}_1^{\eta,\xb},M)$ is not metrically subregular in direction $u$
according to Proposition \ref{PropOptCond} and apply Theorem
\ref{ThDirSubReg}. Therefore there exists $0\not=\lambda_0\in
N(0;\R_-;\nabla f(\xb)u)$ and $\lambda=(\lambda_1,\lambda_2)\in
N(F(\xb);\Omega;\nabla F(\xb)u)=N(F_1(\xb);\Omega_1; \nabla
F_1(\xb)u)\times N(F_2(\xb);\Omega_2;\nabla F_2(\xb)u)$ with
$\lambda_0\nabla f(\xb)^T+\nabla F(\xb)^T\lambda=0$. In addition,
if $F$ and $f$ are twice Fr\'echet differentiable, we can assume
that \[u^T\nabla_x^2(\lambda_0f+\lambda^T
F)(\xb)u=u^T\nabla_x^2{\cal L}(\xb,\lambda_0,\lambda)u\geq 0.\]
Then $\lambda_0>0$,
$\frac{\lambda}{\lambda_0}=(\frac{\lambda_1}{\lambda_0},\frac{\lambda_2}{\lambda_0})\in
\Lambda^1(\xb;u)$ and $u^T\nabla_x^2{\cal
L}(\xb,1,\frac{\lambda}{\lambda_0})u\geq 0$.

If $M$ is not metrically subregular in direction $u$, we can
apply Theorem \ref{ThDirSubReg} to $(M_1,M_2)$ to conclude that
the assertions hold with
$(\lambda_1,\lambda_2)\in\Lambda^0(\xb;u)$.
\end{proof}

In case of the nonlinear programming problem, where the
constraints are given by smooth inequality and equality
constraints, i.e. $\Omega$ is a convex polyhedron of the form
$\R_-^l\times\{0\}^p$, by Lemma \ref{LemRelationsMult} we have
$\hat\Lambda^{\lambda_0}(\xb)=\hat\Lambda^{\lambda_0}(\xb;u)=\Lambda^{\lambda_0}(\xb;u)=
\Lambda^{\lambda_0}(\xb)$ for every $\lambda_0\geq0$ and every
critical direction $u$. Moreover, in case that $m_2=0$, i.e. we do
not identify some part of the constraints being subregular, the
sets $\Lambda^1(\xb)$ and $\{(\lambda_0,\lambda)\mv \lambda_0\geq
0,\lambda\in\Lambda^{\lambda_0}(\xb)\}$ coincide with the sets of
multipliers fulfilling the Karush-Kuhn-Tucker conditions and the
Fritz John conditions, respectively. Hence, in this case we can
recover from Theorem \ref{ThBaseOptCond} the usual first-order and
second-order necessary conditions of both Fritz-John and
Karush-Kuhn-Tucker type,  see e.g. \cite{Ben80,Iof79c,LeMiOs78}. However, the statement of
Theorem \ref{ThBaseOptCond} is a little bit stronger. In Example
\ref{ExGuignard} below we will give an example of a nonlinear
programming problem, where we cannot reject a non-optimal point by
the usual necessary optimality conditions, but by using our theory
based on directional metric subregularity we can. However, we will
not work out the theory for the nonlinear programming problem, but
for the more general problem with disjunctive constraints.

Next we relate the first-order optimality conditions contained in
Theorem \ref{ThBaseOptCond} with some  stationarity concepts:
\begin{definition}
Let $\xb$ be feasible for the problem \eqref{EqMPDC}. We say  that
\begin{enumerate}
\item $\xb$ is {\em B-stationary} (Bouligand-stationary) if
\[\nabla f(\xb)u\geq 0\ \forall u\in T(\xb;{\cal F}),\]
i.e. $-\nabla f(\xb)^T\in \hat N(\xb;{\cal F})$,
 \item $\xb$ is {S-stationary} (strongly stationary) if
\[\hat \Lambda^1(\xb)\not=\emptyset,\]
\item $\xb$ is {\em M-stationary} (Mordukhovich-stationary) if
\[\Lambda^1(\xb)\not=\emptyset,\]
\item $\xb$ is {\em extended M-stationary} if
\[\Lambda^1(\xb;u)\not=\emptyset\ \forall u\in{\cal C}(\xb).\]
\end{enumerate}
\end{definition}
It is well known that a local minimizer is  B-stationary. The
B-stationarity condition expresses that at a local minimizer there
does not exist any feasible descent direction.

Our definition of B-stationarity corresponds  to the definition of
B-stationarity for MPECs as can be found in the monograph
\cite{LuPaRa96}. The definitions of M-stationarity and
S-stationarity were introduced in \cite{FleKanOut07} and are in
accordance with the definitions for MPECs \cite{SchSch00}. The
definition of extended M-stationarity is motivated by Theorem
\ref{ThBaseOptCond}.

\begin{lemma}\label{LemS_StatB_Stat}If $\xb$ is S-stationary
 then $\nabla f(\xb)u\geq 0$ $\forall u\in T_{\rm lin}(\xb)$ and consequently $\xb$ is B-stationary.
\end{lemma}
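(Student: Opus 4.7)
The plan is to unpack the definition of S-stationarity directly and use the polarity relation between the linearized tangent cone and the Fréchet normal cone. By definition, $\hat\Lambda^1(\xb)\neq\emptyset$ furnishes a multiplier $\lambda\in\hat N(\nabla F(\xb)\cdot 0;T(F(\xb);\Omega))=\hat N(0;T(F(\xb);\Omega))$ satisfying $\nabla f(\xb)+\nabla F(\xb)^T\lambda=0$. The paper has already recorded that $\hat N(0;K)=K^\circ$ for any cone $K$, so $\lambda$ lies in the polar of $T(F(\xb);\Omega)$.

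Next I would pick any $u\in T_{\rm lin}(\xb)$. By the very definition of the linearized cone, $\nabla F(\xb)u\in T(F(\xb);\Omega)$, and therefore $\lambda^T\nabla F(\xb)u\leq 0$ by the polarity relation. Substituting the stationarity identity $\nabla F(\xb)^T\lambda=-\nabla f(\xb)^T$ gives
\[
\nabla f(\xb)u=-\lambda^T\nabla F(\xb)u\geq 0,
\]
which is the first claim of the lemma.

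For the consequence, I would invoke the inclusion $T(\xb;\mathcal{F})\subset T_{\rm lin}(\xb)$ which was stated in the definition of $T_{\rm lin}(\xb)$ at the beginning of this section. Restricting the inequality $\nabla f(\xb)u\geq 0$ from $T_{\rm lin}(\xb)$ to the smaller set $T(\xb;\mathcal{F})$ immediately yields B-stationarity.

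There really is no hard step here; the entire argument is a one-line polarity computation once the definitions are unfolded. The only point where care is needed is checking that $\hat\Lambda^1(\xb)=\hat\Lambda^1(\xb;0)$ is interpreted with $u=0$, so that the normal-cone membership is $\lambda\in\hat N(0;T(F(\xb);\Omega))$, and recognizing this as the polar cone $T(F(\xb);\Omega)^\circ$. No splitting $M=(M_1,M_2)$ or second-order information is needed.
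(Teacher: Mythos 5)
Your proof is correct and follows essentially the same route as the paper: take $\lambda\in\hat\Lambda^1(\xb)\subset\hat N(0;T(F(\xb);\Omega))=T(F(\xb);\Omega)^\circ$, observe $\nabla F(\xb)u\in T(F(\xb);\Omega)$ for $u\in T_{\rm lin}(\xb)$, and conclude $\nabla f(\xb)u=-\lambda^T\nabla F(\xb)u\geq 0$. The final step via the inclusion $T(\xb;{\cal F})\subset T_{\rm lin}(\xb)$ is exactly what the paper leaves implicit, so nothing is missing.
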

\begin{proof}Consider an arbitrarily fixed  direction  $u\in T_{\rm lin}(\xb)$.
Since $\nabla F(\xb)u\in T(F(\xb);\Omega)$,  for every
$\lambda\in\hat\Lambda^1(\xb)\subset \hat N(0;T(F(\xb);\Omega))$
we have $-\nabla f(\xb)u=\lambda^T\nabla F(\xb)u\leq 0$.
\end{proof}

Hence, every S-stationary solution is also B-stationary. However,
the converse direction is only true under some relatively strong
constraint qualification.

\begin{definition}\label{DefLICQ_u}
Let $u\in T_{\rm lin}(\xb)$. We say that the {\em linear
independence constraint qualification condition in direction $u$}
(LICQ(u)) holds at $\xb$ if there is some subspace $L\subset \R^m$
such that
\[T(\nabla F(\xb)u;T(F(\xb);\Omega))+L\subset T(\nabla F(\xb)u;T(F(\xb);\Omega))\]
and
\[\nabla F(\xb)\R^n+L=\R^m.\]
\end{definition}
Note that LICQ(0) is related to the non-degeneracy condition
\cite[(4.172)]{BonSh00}. The notation LICQ(u) is motivated by the
fact that for the MPEC \eqref{EqMPEC} the condition LICQ(0) is
equivalent with the well-known MPEC-LICQ constraint qualification,
as we will see in Section \ref{SecMPEC}. In particular, for the
nonlinear programming problem LICQ(0) is the same as LICQ, i.e.,
the gradients of the active constraints are linearly independent.

\begin{lemma}\label{LemB_StatS_Stat}If $\xb$ is B-stationary and
LICQ(0) holds at $\xb$, then $\xb$ is also S-stationary.
\end{lemma}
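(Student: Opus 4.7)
The plan is to construct a single multiplier $\lambda^\ast\in L^\perp$ satisfying $\nabla F(\xb)^T\lambda^\ast=-\nabla f(\xb)^T$ and lying in $T(F(\xb);\Omega)^\circ=\hat N(F(\xb);\Omega)$, which places $\lambda^\ast\in\hat\Lambda^1(\xb)$ and thereby proves S-stationarity.

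First I would record the linear-algebraic consequences of LICQ(0): from $\nabla F(\xb)\R^n+L=\R^m$ it follows that $\ker\nabla F(\xb)^T\subset L$, that $\nabla F(\xb)^T$ is injective on $L^\perp$, and that $\nabla F(\xb)^T L^\perp=(\nabla F(\xb)^{-1}L)^\perp$. Moreover, since $0\in T(F(\xb);\Omega)$ and $T(F(\xb);\Omega)+L\subset T(F(\xb);\Omega)$, we get $L\subset T(F(\xb);\Omega)$. Next I would verify $T(\xb;{\cal F})=T_{\rm lin}(\xb)$ under LICQ(0) via a Robinson/Lyusternik-type argument applied branch by branch to the active polyhedra $P_i$. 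B-stationarity then yields $\nabla f(\xb)u\geq 0$ for every $u\in T_{\rm lin}(\xb)$, and in particular on the subspace $\nabla F(\xb)^{-1}L\subset T_{\rm lin}(\xb)$ this inequality holds for both $u$ and $-u$, forcing $\nabla f(\xb)$ to vanish on $\nabla F(\xb)^{-1}L$. Consequently $-\nabla f(\xb)^T\in(\nabla F(\xb)^{-1}L)^\perp=\nabla F(\xb)^T L^\perp$, and by the injectivity of $\nabla F(\xb)^T$ on $L^\perp$ there is a unique $\lambda^\ast\in L^\perp$ with $\nabla F(\xb)^T\lambda^\ast=-\nabla f(\xb)^T$.

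The technical heart of the argument is to show that this $\lambda^\ast$ lies in $T(F(\xb);\Omega)^\circ$. Fix $y\in T(F(\xb);\Omega)$; by $\nabla F(\xb)\R^n+L=\R^m$ there exist $v\in\R^n$ and $\ell\in L$ with $y=\nabla F(\xb)v+\ell$. The $L$-invariance of $T(F(\xb);\Omega)$, together with $-\ell\in L$, gives $\nabla F(\xb)v=y-\ell\in T(F(\xb);\Omega)$, hence $v\in T_{\rm lin}(\xb)=T(\xb;{\cal F})$, and B-stationarity forces $\nabla f(\xb)v\geq 0$. Using $\lambda^\ast\in L^\perp$ to annihilate the $\ell$-contribution,
\[
\lambda^{\ast T}y=\lambda^{\ast T}\nabla F(\xb)v+\lambda^{\ast T}\ell=(\nabla F(\xb)^T\lambda^\ast)^T v+0=-\nabla f(\xb)v\leq 0,
\]
which is precisely the polarity condition. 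The obstacle I expect to be most delicate is the identity $T(\xb;{\cal F})=T_{\rm lin}(\xb)$: LICQ(0) is formulated on the whole $\Omega$, so transferring it to a Robinson-type CQ on each individual branch $F^{-1}(P_i)$ takes some care, although the conclusion is standard for linear-independence-type constraint qualifications.
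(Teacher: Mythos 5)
Your construction of the S-multiplier itself is correct and is genuinely different from the paper's route: the paper obtains this lemma as a corollary of Proposition \ref{PropLICQ}, whose proof passes through two auxiliary linear programs, their M-stationarity conditions and a convexification of $T(\nabla F(\xb)u;T(F(\xb);\Omega))$, whereas you solve $\nabla F(\xb)^T\lambda^\ast=-\nabla f(\xb)^T$ on $L^\perp$ explicitly and verify the polarity $\lambda^\ast\in T(F(\xb);\Omega)^\circ=\hat N(F(\xb);\Omega)$ by decomposing an arbitrary $y\in T(F(\xb);\Omega)$ as $\nabla F(\xb)v+\ell$. That part is clean and, for the case $u=0$ treated here, simpler than the paper's argument. (One small slip: $\nabla F(\xb)\R^n+L=\R^m$ yields $\ker\nabla F(\xb)^T\cap L^\perp=\{0\}$, not $\ker\nabla F(\xb)^T\subset L$; only the former is used, so nothing breaks.)

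The genuine gap is exactly the step you flagged, namely $\nabla f(\xb)v\ge 0$ for all $v\in T_{\rm lin}(\xb)$, which you propose to get from $T(\xb;{\cal F})=T_{\rm lin}(\xb)$ via a branch-by-branch Robinson/Lyusternik argument. That transfer fails: LICQ(0) does not imply Robinson's constraint qualification for the individual systems $F(x)\in P_i$, because the subspace $L$ is only required to leave the \emph{union} $T(F(\xb);\Omega)$ invariant, not each convex piece $T(F(\xb);P_i)$ separately; hence $L$ need not lie in $\lin T(F(\xb);P_i)$ and $\nabla F(\xb)\R^n-T(F(\xb);P_i)=\R^m$ can fail. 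For instance, with $\Omega=(\R_+\times\{0\})\cup(\R_-\times\{0\})$, $L=\R\times\{0\}$ and $\nabla F(\xb)\R^n=\Span\{(1,1)\}$, LICQ(0) holds but Robinson's CQ is violated for the branch $\R_+\times\{0\}$. The inclusion $T_{\rm lin}(\xb)\subset T(\xb;{\cal F})$ is nevertheless true under LICQ(0), but the proof has to go through the paper's directional machinery rather than classical branchwise regularity: for every $v\in T_{\rm lin}(\xb)$ the same subspace $L$ witnesses LICQ(v) (if $C+L\subset C$ and $w\in C$, then $T(w;C)+L\subset T(w;C)$), so $\Lambda^0(\xb;v)=\emptyset$ by Lemma \ref{LemBasLICQ}, so $M$ is metrically subregular in direction $v$ by Lemma \ref{LemSuffDirSubreg}, and the computation in the proof that WDMSCQ implies GGCQ then gives $v\in T(\xb;{\cal F})$. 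With that substitution for your second step, the rest of your argument goes through.
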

\begin{proof}
Follows from Proposition \ref{PropLICQ} below.
\end{proof}

For MPECs \eqref{EqMPEC} it is well known, that the  weaker
constraint qualification GMFCQ \eqref{EqMCCQ} does not guarantee
S-stationarity of a local minimizer. Although GMFCQ implies that
$T(\xb;{\cal F})=T_{\rm lin}(\xb)$, we only have the inclusion
$\nabla F(\xb)^T\hat N(F(\xb);\Omega)\subset \hat N(0;T_{\rm
lin}(\xb))$ (see \cite[Theorem 6.14]{RoWe98}) and equality, which
would be required for S-stationarity, is not fulfilled in general.

From Theorem \ref{ThBaseOptCond} it follows  that a local
minimizer is M-stationary if there exists one critical direction
$u$ such that the multifunction $M$ associated with the
constraints is metrically subregular in direction $u$. Further, if
$M$ is metrically subregular in every critical direction $u$, then
a local minimizer is also extended M-stationary. Note that the
requirement that $M$ is metrically subregular in one respectively
any critical direction is  not a constraint qualification in
general, since the objective function is also involved in the
definition of critical directions. The only exception is the
trivial critical direction $u=0$, because metric subregularity of
$M$ in direction $0$ means metric subregularity of $M$. Hence,
under the constraint qualification  of metric subregularity of the
constraint mapping $M$ we have that every local minimizer is also
extended M-stationary.

We will now show that this holds true under some weaker constraint
qualification than metric subregularity. Actually we will prove
that under a suitable weak constraint qualification extended M-
stationarity is equivalent to B-stationarity.

\begin{definition}(cf. \cite{FleKanOut07})
We say that the {\em generalized} (or dual) {\em Guignard constraint qualification}
(GGCQ) holds at the feasible point $\xb\in{\cal F}$ if
\[\hat N(\xb;{\cal F})=\hat N(0;T_{\rm lin}(\xb)).\]
\end{definition}

Recall that a polyhedral cone is finitely  generated \cite[\S
19]{Ro70}. For each $i=1,\ldots,\bar p$ the set $P_i$ is
polyhedral and therefore both the tangent cone $T(F(\xb);P_i)$ and
the cone $L_i:=\{u\in\R^n\mv\nabla F(\xb)u\in T(F(\xb);P_i)\}$ are
polyhedral cones and consequently finitely generated. Hence,
$\co(\bigcup_{i=1}^{\bar p} L_i) = \co\{u\in\R^n\mv \nabla
F(\xb)u\in T(F(\xb);\Omega)\}$ is also finitely generated, at
least by the union of the generators for $L_i$, but maybe by a
smaller set. That is, there exists a set ${\cal
U}=\{u_1,\ldots,u_N\}\subset T_{\rm lin}(\xb)$ such that
\begin{equation}\label{EqFiniteGen}
\co T_{\rm lin}(\xb)=\{\sum_{i=1}^N \alpha_iu_i\mv \alpha_i\geq 0, i=1,\ldots,N\}
\end{equation}

\begin{theorem}
\label{ThEquivM-B-stat}Assume that GGCQ  is satisfied  at the point $\xb\in{\cal F}$ feasible for the problem \eqref{EqMPDC} and let $\co T_{\rm lin}(\xb)$ be finitely generated by the set ${\cal U}=\{u_1,\ldots,u_N\}\subset T_{\rm lin}(\xb)$. Then the following statements are equivalent:
\begin{enumerate}
\item[(a)] $\xb$ is B-stationary.
\item[(b)] $\nabla f(\xb) u\geq0$ $\forall u\in T_{\rm lin}(\xb)$.
\item[(c)] $\xb$ is extended M-stationary.
\item[(d)] For every direction $u\in {\cal U}\cap {\cal C}(\xb)$ there holds $\Lambda^1(\xb;u)\not=\emptyset$.
\end{enumerate}
\end{theorem}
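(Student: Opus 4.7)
The plan is to close the cycle $(a) \Leftrightarrow (b)$, $(d) \Rightarrow (b)$, and $(b) \Rightarrow (c)$, since $(c) \Rightarrow (d)$ is immediate from $\mathcal{U} \cap \mathcal{C}(\bar x) \subseteq \mathcal{C}(\bar x)$. For $(a) \Leftrightarrow (b)$ I would invoke GGCQ directly: it yields $\hat N(\bar x; \mathcal{F}) = \hat N(0; T_{\rm lin}(\bar x)) = T_{\rm lin}(\bar x)^\circ$, so the condition $-\nabla f(\bar x)^T \in \hat N(\bar x; \mathcal{F})$ defining B-stationarity is exactly $\nabla f(\bar x) u \geq 0$ for every $u \in T_{\rm lin}(\bar x)$.

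For $(d) \Rightarrow (b)$ I would inspect each generator $u_k \in \mathcal{U}$ separately. If $u_k \notin \mathcal{C}(\bar x)$, then $\nabla f(\bar x) u_k > 0$ by definition of the critical cone. Otherwise pick $\lambda \in \Lambda^1(\bar x; u_k)$, nonempty by (d); Lemma \ref{LemInclNormalCone} gives $\lambda^T \nabla F(\bar x) u_k = 0$, and combining with the stationarity identity $\nabla f(\bar x)^T + \nabla F(\bar x)^T \lambda = 0$ forces $\nabla f(\bar x) u_k = 0$. Either way $\nabla f(\bar x) u_k \geq 0$, and since the generation hypothesis writes every $u \in T_{\rm lin}(\bar x) \subseteq \co T_{\rm lin}(\bar x)$ as $u = \sum_k \alpha_k u_k$ with $\alpha_k \geq 0$, linearity propagates nonnegativity to all of $T_{\rm lin}(\bar x)$.

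The real work is $(b) \Rightarrow (c)$. Fix $u \in \mathcal{C}(\bar x)$; (b) together with the definition of $\mathcal{C}(\bar x)$ pins $\nabla f(\bar x) u = 0$. Since $u \in T_{\rm lin}(\bar x)$, the image $y := \nabla F(\bar x) u$ lies in $T(F(\bar x); P_{i^*})$ for some $i^* \in \bar{\mathcal{P}}$, and $u$ belongs to the convex polyhedral cone $L_{i^*} := \{w : \nabla F(\bar x) w \in T(F(\bar x); P_{i^*})\} \subseteq T_{\rm lin}(\bar x)$. By (b), $\nabla f(\bar x)$ is nonnegative on $L_{i^*}$ and vanishes at $u$, so $u$ minimizes this linear functional over $L_{i^*}$; Farkas then produces a multiplier of the form $\lambda = \sum_{j \in \mathcal{A}_{i^*}(y)} \mu_j a_{i^* j}$ with $\mu_j \geq 0$ satisfying $\nabla F(\bar x)^T \lambda + \nabla f(\bar x)^T = 0$ and $\lambda \in \hat N(y; T(F(\bar x); P_{i^*}))$. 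It remains to promote $\lambda$ into $N(F(\bar x); \Omega; y)$: by Lemma \ref{LemBasicPropCone} it is enough to display $v \in T(y; T(F(\bar x); \Omega))$ with $\lambda \in \hat N(v; T(y; T(F(\bar x); \Omega)))$, and I would take $v$ in the relative interior of the face $\{w \in T(y; T(F(\bar x); P_{i^*})) : \lambda^T w = 0\}$, chosen so that $v \notin T(y; T(F(\bar x); P_i))$ for every other $i \in \bar{\mathcal{P}}$ with $y \in T(F(\bar x); P_i)$. The main obstacle is precisely this avoidance: after reducing the natural cover of $T(y; T(F(\bar x); \Omega))$ by the pieces $T(y; T(F(\bar x); P_i))$ to a minimal one, one has to verify that the complementarity face above is not entirely contained in the remaining pieces; once this is done, the perturbed point $F(\bar x) + ty + t^2 v$ lies only in $P_{i^*}$ for small $t > 0$, and the active-set calculation from the proof of Lemma \ref{LemBasicPropCone} delivers $\lambda \in \hat N(F(\bar x) + ty + t^2 v; \Omega)$, giving $\lambda \in N(F(\bar x); \Omega; y)$ as needed.
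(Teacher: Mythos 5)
Your handling of (a)$\Leftrightarrow$(b), (c)$\Rightarrow$(d) and (d)$\Rightarrow$(b) is correct and essentially identical to the paper's (the identity $\lambda^T\nabla F(\xb)u_k=0$ you pull from Lemma \ref{LemInclNormalCone} is exactly \eqref{EqLambda0Nabla_f}). The problem is (b)$\Rightarrow$(c), where you take a different, direct route and leave its decisive step unproved. After Farkas hands you $\lambda\in\hat N(\nabla F(\xb)u;T(F(\xb);P_{i^*}))$ with $\nabla F(\xb)^T\lambda=-\nabla f(\xb)^T$, membership of $\lambda$ in $N(F(\xb);\Omega;\nabla F(\xb)u)$ requires, via Lemma \ref{LemBasicPropCone} and the intersection formula for Fr\'echet normals of a finite union, a point $v$ such that $\lambda$ lies simultaneously in $\hat N(v;T(\nabla F(\xb)u;T(F(\xb);P_i)))$ for \emph{every} $i$ whose piece contains $v$. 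Your plan is to choose $v$ in the relative interior of the face $\{w\in T(\nabla F(\xb)u;T(F(\xb);P_{i^*}))\mv\lambda^Tw=0\}$ so as to avoid all other pieces, but you explicitly flag this avoidance as unverified, and it can genuinely fail: that face may be contained in, or covered by, the other cones $T(\nabla F(\xb)u;T(F(\xb);P_j))$ (a convex set covered by finitely many closed convex sets need not miss all of them on a relatively open subset), and then your fixed $\lambda$ must additionally lie in the typically smaller normal cones of those pieces, which nothing in the Farkas step guarantees. Since the Farkas multiplier over a single, arbitrarily chosen $P_{i^*}$ is neither unique nor canonical, the strategy of fixing $\lambda$ first and then hunting for $v$ has no fallback when avoidance fails; the correct multiplier may be attached to a different piece, a different face, or a genuine intersection of pieces.

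The paper avoids this combinatorial difficulty entirely. It observes that (b) says $u=0$ minimizes $\nabla f(\xb)w$ over $T_{\rm lin}(\xb)$, that $\Omega$ coincides with $F(\xb)+T(F(\xb);\Omega)$ near $F(\xb)$ because $\Omega$ is a finite union of polyhedra, hence $0$ is a local minimizer of the fully linearized program $\min\nabla f(\xb)w$ subject to $F(\xb)+\nabla F(\xb)w\in\Omega$, whose constraint mapping is polyhedral and therefore metrically subregular in every direction by Robinson's theorem. Theorem \ref{ThBaseOptCond} then yields $\Lambda^1(\cdot;u)\neq\emptyset$ for every critical direction of the linearized problem, and these multiplier sets coincide with $\Lambda^1(\xb;u)$ for the original problem. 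If you want to keep your elementary approach, you would in effect have to reprove the directional result for polyhedral systems that Theorem \ref{ThBaseOptCond} encapsulates; as written, the argument has a genuine gap at exactly the point where the disjunctive structure bites.
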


\begin{proof}Using the equivalences
\[\mbox{$\xb$ is B-stationary}\Leftrightarrow -\nabla f(\xb)\in \hat N(\xb;{\cal F}),\quad -\nabla f(\xb)\in\hat N(0;T_{\rm lin}(\xb))\Leftrightarrow \nabla f(\xb) u\geq0\ \forall u\in T_{\rm lin}(\xb)\]
we obtain  (a)$\Leftrightarrow$(b) from GGCQ.

Next we show (b)$\Rightarrow$(c). Statement (b) means that $u=0$ is a solution of the problem
\[\min \nabla f(\xb)u \quad \text{subject to}\quad \nabla F(\xb)u\in T(F(\xb);\Omega).\]
Since $\Omega$ is the  union of finitely many polyhedra, there is
a neighborhood $U$ of $F(\xb)$ such that $\Omega\cap
U=(F(\xb)+T(F(\xb);\Omega))\cap U$ and thus $u=0$ is a local
minimizer of the problem
\begin{equation}
\label{EqLinProbl}\min \nabla f(\xb)u \quad \text{subject to}\quad F(\xb)+\nabla F(\xb)u\in \Omega
\end{equation}
The constraint mapping  $u\rightrightarrows F(\xb)+\nabla F(\xb)u-
\Omega$ is a polyhedral multifunction and therefore metrically
subregular at $0$ by Robinson's result \cite{RoWe98}. Hence we can
apply Theorem \ref{ThBaseOptCond} to obtain that $0$ is extended
M-stationary for the problem \eqref{EqLinProbl}. But it is easy to
see that extended M-stationarity of $u=0$ for the problem
\eqref{EqLinProbl}  is equivalent to  extended M-stationarity of
$\xb$ for the problem \eqref{EqMPDC} and the assertion follows.

The implication (c)$\Rightarrow$(d) is  obviously true. Finally we
show (d)$\Rightarrow$(b). Since $\Lambda^1(\xb;u)\not=\emptyset$
implies $\nabla f(\xb)u=0$ by \eqref{EqLambda0Nabla_f}, we see
that (d) implies $\nabla f(\xb)u\geq 0$ $\forall u\in{\cal U}$.
Since $\co T_{\rm lin}(\xb)$ is generated by $\cal U$ we obtain
$\nabla f(\xb)u\geq 0$ $\forall u\in\co T_{\rm lin}(\xb)$ and (b)
follows.
\end{proof}

\begin{remark}
Assumption GGCQ is only needed to prove $(a)\Leftrightarrow (b)$.
Since we always have $T(\xb;{\cal F})\subset T_{\rm lin}(\xb)$ and
consequently
\begin{equation}\label{EqInclGGCQ}\hat N(\xb;{\cal F})\supset \hat N(0;T_{\rm lin}(\xb)),\end{equation}
we obtain that the relations
\[(a)\Leftarrow (b)\Leftrightarrow (c)\Leftrightarrow (d)\]
are valid without assuming GGCQ.
\end{remark}

Obviously extended M-stationarity implies  M-stationarity. Putting
all together we obtain the following picture:
\[\begin{array}{ccccccc}&&\fbox{local minimizer}\\&&\Downarrow\\
\fbox{S-stationarity}&
\begin{array}{c}\Longrightarrow\\\mathop{\Longleftarrow}\limits_{{\rm LICQ(0)}}\end{array}
&\fbox{B-stationarity}&\begin{array}{c}\mathop{\Longrightarrow}\limits^{\rm
GGCQ}\\\Longleftarrow\end{array}&\fbox{ext.
M-stationarity}&\Longrightarrow&\fbox{M-stationarity}
\end{array}
\]

From Theorem \ref{ThEquivM-B-stat}  we derive that GGCQ is a
constraint qualification for a local minimizer to be extended
M-stationary. The following proposition states, that GGCQ is in
some sense the weakest possible constraint qualification ensuring
extended M-stationarity.
\begin{proposition}
Assume that $\xb\in{\cal F}$ is an extended M-stationary point of
\[\min f(x)\mbox{ subject to } F(x)\in\Omega\]
for every continuously differentiable function $f:\R^n\to\R$ with
$\xb$ being a local minimizer. Then GGCQ is fulfilled at $\xb$.
\end{proposition}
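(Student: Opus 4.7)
By \eqref{EqInclGGCQ} the inclusion $\hat N(\xb;\mathcal{F})\supset\hat N(0;T_{\rm lin}(\xb))$ is automatic, so only the reverse inclusion requires work. The plan is to fix $\xi\in\hat N(\xb;\mathcal{F})$ and produce a $C^1$ function $f:\R^n\to\R$ satisfying $\nabla f(\xb)=-\xi$ for which $\xb$ is a local minimizer of \eqref{EqMPDC}. Once such an $f$ is at hand, the hypothesis forces $\xb$ to be extended M-stationary for $f$, and the implication (c)$\Rightarrow$(b) of Theorem~\ref{ThEquivM-B-stat}---whose validity without GGCQ is explicitly noted in the Remark following that theorem---delivers $\nabla f(\xb)u\geq 0$ for every $u\in T_{\rm lin}(\xb)$. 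Substituting $\nabla f(\xb)=-\xi$, this reads $\xi^Tu\leq 0$ on $T_{\rm lin}(\xb)$, i.e.\ $\xi\in\hat N(0;T_{\rm lin}(\xb))$.

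To construct $f$, I would introduce the non-decreasing function
\[r(t):=\max\bigl\{0,\sup\{\xi^T(x-\xb)\mv x\in\mathcal{F},\ \norm{x-\xb}\leq t\}\bigr\},\]
for which $\xi\in\hat N(\xb;\mathcal{F})$ is precisely the condition $r(t)=o(t)$ as $t\downarrow 0$. A standard smoothing---bound $r(t)\leq t\epsilon(t)$ with $\epsilon(t):=\sup_{0<s\leq t}r(s)/s\to 0$, replace $\epsilon$ by a continuous non-decreasing majorant, and integrate once---yields $\phi\in C^1([0,\infty))$ with $\phi(0)=\phi'(0)=0$ and $\phi(t)\geq r(t)$ on some $[0,t_0]$. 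Because $\phi'(0)=0$, the composition $x\mapsto\phi(\norm{x-\xb})$ is of class $C^1$ on $\R^n$ (its gradient $\phi'(\norm{x-\xb})(x-\xb)/\norm{x-\xb}$ extends continuously by $0$ at $\xb$), so
\[f(x):=-\xi^T(x-\xb)+\phi(\norm{x-\xb})\]
is $C^1$ with $\nabla f(\xb)=-\xi$; moreover for $x\in\mathcal{F}$ close to $\xb$ one has $f(x)\geq -\xi^T(x-\xb)+r(\norm{x-\xb})\geq 0=f(\xb)$, confirming the local minimality.

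The main obstacle is precisely this smooth variational description of $\xi$: combining $C^1$ regularity of $\phi$, the vanishing $\phi'(0)=0$, and domination of the only-$o(t)$-regular function $r$ requires some care (in particular regularising the jumps of the non-decreasing function $\epsilon$ before integration). Should the explicit construction feel cumbersome, a cleaner route is to invoke an abstract smooth variational description of regular normals (of the type available in the monograph \cite{Mo06a}), which provides directly a $C^1$ function $s$ with $\nabla s(\xb)=\xi$ attaining a local maximum on $\mathcal{F}$ at $\xb$; taking $f:=-s$ then completes the argument at once.
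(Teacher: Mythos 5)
Your proposal is correct and follows essentially the same route as the paper: the paper argues by contraposition, picks $\xi\in\hat N(\xb;{\cal F})\setminus\hat N(0;T_{\rm lin}(\xb))$, and invokes the smooth variational description of regular normals (\cite[Theorem 1.30]{Mo06a}) to produce a $C^1$ function $\varphi$ with $\nabla\varphi(\xb)=\xi^T$ maximized over ${\cal F}$ at $\xb$, then derives a contradiction from $\Lambda^1(\xb;u)\not=\emptyset$ via \eqref{EqLambda0Nabla_f} --- which is exactly the ``cleaner route'' you name at the end. Your explicit construction of the majorant $\phi$ and the direct (rather than contrapositive) framing via the implication (c)$\Rightarrow$(b) of Theorem~\ref{ThEquivM-B-stat} are sound and self-contained, but amount to the same argument.
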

\begin{proof}
By contraposition. Assume that GGCQ is not fulfilled at $\xb$.
Then, by taking into account \eqref{EqInclGGCQ}, there is some
$\xi\in\hat N(\xb;{\cal F})\setminus \hat N(0;T_{\rm lin}(\xb))$
and thus $\xi^T u>0$ for some $u\in T_{\rm lin}(\xb)$. By
\cite[Theorem 1.30]{Mo06a} there is some continuously differentiable function
$\varphi$ with $\nabla \varphi(\xb)=\xi^T$ such that $\varphi$ attains
its global maximum over ${\cal F}$ at $\xb$. Therefore, by
taking $f=-\varphi$, $u$ is a critical direction fulfilling
the extended M-stationarity condition $\Lambda^1(\xb;u)\not=\emptyset$ and by \eqref{EqLambda0Nabla_f} we obtain  $\nabla
f(\xb)u=-\xi^T u=0$, a contradiction.
\end{proof}

 GGCQ is
very difficult to verify in general. Hence we present another
constraint qualification  stronger than GGCQ but verifiable:
\begin{definition}
We say that the {\em weak directional metric subregularity constraint qualification} (WDMSCQ) is satisfied at the point $\xb$ feasible for \eqref{EqMPDC}, if there is a finite set ${\cal U}\subset T_{\rm lin}(\xb)$ generating $\co T_{\rm lin}(\xb)$ such that $M(x)=F(x)-\Omega$ is metrically subregular in every direction $u\in {\cal U}$ at $(\xb,0)$.
\end{definition}

\begin{proposition} WDMSCQ $\Rightarrow$ GGCQ.
\end{proposition}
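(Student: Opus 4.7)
The plan is to establish the nontrivial inclusion $\hat N(\xb;{\cal F})\subset \hat N(0;T_{\rm lin}(\xb))$, since the reverse inclusion is already provided by \eqref{EqInclGGCQ}. Via polarity this amounts to showing that every $\xi\in \hat N(\xb;{\cal F})$ satisfies $\xi^Tu\leq 0$ for all $u\in T_{\rm lin}(\xb)$. Because WDMSCQ supplies a finite set ${\cal U}\subset T_{\rm lin}(\xb)$ generating $\co T_{\rm lin}(\xb)$, every $u\in T_{\rm lin}(\xb)$ can be written as a nonnegative combination $\sum_i\alpha_i u_i$ with $u_i\in{\cal U}$, so it suffices to verify $\xi^Tu\leq 0$ for each $u\in{\cal U}$.

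The key reduction is to prove that every $u\in{\cal U}$ actually lies in the tangent cone $T(\xb;{\cal F})$; the desired inequality $\xi^Tu\leq 0$ then follows immediately from $\xi\in \hat N(\xb;{\cal F})$. Fix a nonzero $u\in{\cal U}$ (the case $u=0$ is trivial) and take any sequence $t_k\downarrow 0$. I set $x_k:=\xb+t_k u$; since $x_k-\xb$ is collinear with $u$ and $\norm{x_k-\xb}\to 0$, we have $x_k\in\xb+V_{\rho,\delta}(u)$ for all sufficiently large $k$, where $\rho,\delta,\kappa$ are the moduli of directional metric subregularity of $M$ in direction $u$. I now estimate $\dist{F(x_k),\Omega}$ by $o(t_k)$: from $u\in T_{\rm lin}(\xb)$ there exists a polyhedron $P_{i_0}$ with $F(\xb)\in P_{i_0}$ and $\nabla F(\xb)u\in T(F(\xb);P_{i_0})$, and since $P_{i_0}$ is a convex polyhedron, every tangent direction at $F(\xb)$ is realizable by straight-line motion for small positive steps, giving $F(\xb)+t_k\nabla F(\xb)u\in P_{i_0}\subset\Omega$ for large $k$. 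Continuous differentiability of $F$ then yields $\norm{F(x_k)-(F(\xb)+t_k\nabla F(\xb)u)}=o(t_k)$ and hence $\dist{F(x_k),\Omega}=o(t_k)$.

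Applying directional metric subregularity now delivers points $y_k\in{\cal F}$ with $\norm{y_k-x_k}\leq\kappa\,o(t_k)=o(t_k)$, and consequently $(y_k-\xb)/t_k\to u$ shows $u\in T(\xb;{\cal F})$ as required. The main obstacle in this argument is the production of the $o(t_k)$-bound on $\dist{F(x_k),\Omega}$; this is precisely where the polyhedral structure of $\Omega$ becomes indispensable, since it allows the tangent vector $\nabla F(\xb)u$ to be realized exactly by a first-order feasible motion inside some $P_{i_0}\subset\Omega$, rather than only asymptotically as would be the case for a general closed set. Combining the inclusion ${\cal U}\subset T(\xb;{\cal F})$ with the nonnegative-combination reduction from the first paragraph completes the proof.
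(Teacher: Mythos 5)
Your proof is correct and follows essentially the same route as the paper's: reduce to the finite generating set ${\cal U}$ via nonnegative combinations, use the polyhedrality of $\Omega$ to get $F(\xb)+t\nabla F(\xb)u\in\Omega$ for small $t>0$, and then invoke directional metric subregularity to place each generator in $T(\xb;{\cal F})$, forcing $\xi^Tu\leq 0$. The only difference is that you argue directly for the inclusion $\hat N(\xb;{\cal F})\subset\hat N(0;T_{\rm lin}(\xb))$ whereas the paper argues by contraposition; the substance is identical.
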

\begin{proof}
By contraposition. Assuming that GGCQ is not fulfilled at $\xb$,
by taking into account \eqref{EqInclGGCQ}, there is some
$\xi\in\hat N(\xb;{\cal F})\setminus \hat N(0;T_{\rm lin}(\xb))$ and thus $\xi^Tu>0$ for some $u\in T_{\rm lin}(\xb)$. Since $u$ can be
represented as a nonnegative linear combination of
$u_1,\ldots,u_N$, there exists $\tilde u\in {\cal U}$ with
$\xi^T\tilde u>0$. Because $\Omega$ is the union of finitely many
polyhedral sets, there is some neighborhood $U$ of $F(\xb)$ such
that $\Omega\cap U=(F(\xb)+T(F(\xb);\Omega))\cap U$ and therefore
$F(\xb)+t\nabla F(\xb)\tilde u\in\Omega$ for all $t\geq 0$
sufficiently small. Since $M$ is assumed to be metrically
subregular in direction $\tilde u$ there is some $\kappa>0$ such
that
\[\dist{\xb+t\tilde u,{\cal F}}=\dist{\xb+t\tilde u, M^{-1}(0)}\leq\kappa\dist{0,M(\xb+t\tilde u)}\leq \kappa \norm{F(\xb)+t\nabla F(\xb)\tilde u-F(\xb+t\tilde u)}\]
holds for all $t\geq 0$ sufficiently small. This implies that for every $t>0$ we can find some $x_t\in{\cal F}$ satisfying
\[0\leq\limsup_{t\downarrow 0}\norm{\frac {x_t-\xb}t-\tilde u}=\limsup_{t\downarrow 0}\norm{\frac {x_t-(\xb+t\tilde u)}t}\leq \limsup_{t\downarrow0}\frac{\kappa\norm{F(\xb)+t\nabla F(\xb)\tilde u-F(\xb+t\tilde u)}}t=0.\]
Hence $\tilde u\in T(\xb,{\cal F})$ and  because of $\xi\in \hat N(\xb;{\cal F})$  we have  $\xi^T\tilde u\leq 0$ contradicting $\xi^T\tilde u>0$.
\end{proof}

Note that Theorem \ref{ThDirSubReg}  provides point based
conditions to verify WDMSCQ. We reformulate these conditions in
the following lemma:

\begin{lemma}\label{LemSuffDirSubreg}Let $\xb$ be feasible for the problem \eqref{EqMPDC} and let $u\in T_{\rm lin}(\xb)$.
If either
\begin{enumerate}
\item $\Lambda^0(\xb;u)=\emptyset$, or
\item $F$ is twice Fr\'echet differentiable at $\xb$ and $u^T\nabla_x^2{\cal L}(\xb,0,\lambda)u<0$ $\forall \lambda\in \Lambda^0(\xb;u)$,
\end{enumerate}
then $M$ is metrically subregular in direction $u$.
\end{lemma}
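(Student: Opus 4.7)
The plan is a direct application of Theorem \ref{ThDirSubReg} to the splitting $M=(M_1,M_2)$ that was fixed at the beginning of the section for the direction $u$ (recall that by standing assumption $M_2$ is metrically subregular in direction $u$ at $(\xb,0)$). Since $M(x)=M_1(x)\times M_2(x)=F(x)-\Omega$, metric subregularity of the composite multifunction $(M_1,M_2)$ in direction $u$ at $(\xb,(0,0))$ coincides with metric subregularity of $M$ in direction $u$ at $(\xb,0)$, so it suffices to verify the hypothesis of one of the two parts of Theorem \ref{ThDirSubReg}.

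For assertion (1), I would unfold the definition of $\Lambda^0(\xb;u)$. Using $\lambda_0=0$ one has $\nabla_x{\cal L}(\xb,0,\lambda)=\nabla F(\xb)^T\lambda=\nabla F_1(\xb)^T\lambda_1+\nabla F_2(\xb)^T\lambda_2$, while the condition $\lambda\in N(F(\xb);\Omega;\nabla F(\xb)u)$ factors as $\lambda_i\in N(F_i(\xb);\Omega_i;\nabla F_i(\xb)u)$ for $i=1,2$, and the requirement $\lambda_0+\norm{\lambda_1}\neq 0$ becomes $\lambda_1\neq 0$. Thus $\Lambda^0(\xb;u)=\emptyset$ is precisely the implication
\[\nabla F_1(\xb)^T\lambda_1+\nabla F_2(\xb)^T\lambda_2=0,\ \lambda_i\in N(F_i(\xb);\Omega_i;\nabla F_i(\xb)u), i=1,2\ \Rightarrow\ \lambda_1=0,\]
which is exactly the hypothesis of Theorem \ref{ThDirSubReg}(1), and the conclusion follows.

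For assertion (2), the same identification of $\Lambda^0(\xb;u)$ applies. Writing $u^T\nabla_x^2{\cal L}(\xb,0,\lambda)u=u^T\nabla^2(\lambda^TF)(\xb)u=u^T\nabla^2(\lambda_1^T F_1+\lambda_2^T F_2)(\xb)u$, the hypothesis that this expression is negative for every $\lambda\in\Lambda^0(\xb;u)$ is precisely the directional second-order condition in Theorem \ref{ThDirSubReg}(2), which applies provided $u\neq 0$. If $u=0$, the assumed strict inequality cannot hold for any $\lambda$ because the left-hand side is $0$, so the assumption forces $\Lambda^0(\xb;0)=\emptyset$ and one reduces to case (1).

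There is essentially no difficult step here: the content of the lemma is simply a translation of Theorem \ref{ThDirSubReg} into the language of the multiplier sets $\Lambda^0(\xb;u)$. The only small subtlety worth addressing explicitly in the write-up is the case $u=0$ in part (2), which is not covered by Theorem \ref{ThDirSubReg}(2) but reduces trivially to part (1), as noted above.
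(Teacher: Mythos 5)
Your proposal is correct and follows exactly the route the paper intends: the paper states this lemma without proof, explicitly presenting it as a reformulation of Theorem \ref{ThDirSubReg} in terms of the multiplier sets, and your unfolding of $\Lambda^0(\xb;u)$ (using $N(F(\xb);\Omega;\nabla F(\xb)u)=N(F_1(\xb);\Omega_1;\nabla F_1(\xb)u)\times N(F_2(\xb);\Omega_2;\nabla F_2(\xb)u)$ and the standing subregularity assumption on $M_2$) matches that identification precisely. Your explicit treatment of the $u=0$ case in part (2) is a sensible addition that the paper leaves implicit.
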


This lemma states that, if for a critical direction $u$ either the
first-order necessary optimality condition or the second-order
necessary optimality conditions cannot be fulfilled with
multiplier $\lambda_0=0$, then the constraint mapping $M$ is
metrically subregular in direction $u$.

\begin{example}\label{ExGuignard}
Consider the nonlinear programming problem
\begin{eqnarray*}
\min &&-x_1\\
 -x_1+\vert x_1\vert^{\frac 32}&\leq& 0,\\ -x_2&\leq& 0,\\
 (x_1-2x_2)(x_2-2x_1)&\leq& 0,\\
\end{eqnarray*}
at $\xb=(0,0)$. Then $\xb$ is not a local minimizer and we will
demonstrate how this  can be verified by our necessary conditions.
Note that we cannot reject $\xb$  as a local minimizer by the
usual second-order necessary conditions of nonlinear programming,
because the term $\vert x_1\vert^{\frac 32}$  is not twice
Fr\'echet differentiable at $x_1=0$.

Consider the multifunction  $\tilde M(x):=\tilde
F(x)-\tilde\Omega:=(x_1-2x_2)(x_2-2x_1)-\R_-$ and let
$u=(u_1,u_2)\in T_{\rm lin}(\xb)=\R_+^2$ with
$(u_1-2u_2)(u_2-2u_1)< 0$. We shall now show by using Lemma
\ref{LemSuffDirSubreg} that $\tilde M$ is metrically subregular in
direction $u$ at $(\xb,0)$. Straightforward calculations yield
that the corresponding set of multipliers is
$\tilde\Lambda^0(\xb;u)=\R_+\setminus\{0\}$ and for every
$\lambda>0$ we have $u^T\nabla_x^2\tilde{\cal
L}(\xb,0,\lambda)u=2\lambda(u_1-2u_2)(u_2-2u_1)< 0$ establishing
directional metric subregularity.

Hence, for $u=(u_1,u_2)\in T_{\rm lin}(\xb)$ we can use the splitting of the constraint mapping
\begin{eqnarray*}
M_1(x)&=&\left(\begin{array}{c}-x_1+\vert x_1\vert^{\frac 32}\\-x_2\end{array}\right)-\R_-^2,\ M_2(x)=(x_1-2x_2)(x_2-2x_1)-\R_-\mbox{ if $(u_1-2u_2)(u_2-2u_1)< 0$,}\\
M_1(x)&=&\left(\begin{array}{c}-x_1+\vert x_1\vert^{\frac 32}\\-x_2\\(x_1-2x_2)(x_2-2x_1)\end{array}\right)-\R_-^3,\ M_2(x)=\{0\}\mbox{ if $(u_1-2u_2)(u_2-2u_1)\geq 0$.}
\end{eqnarray*}

Now we consider the critical direction $u=(1,0)\in{\cal C}(\xb)=T_{\rm lin}(\xb)=\R_+^2$. The Lagrange function is given by
\[{\cal L}(x,\lambda_0,\lambda_1,\lambda_2,\lambda_3)=-\lambda_0x_1+\lambda_1(-x_1+\vert x_1\vert^{\frac 32})-\lambda_2x_2+\lambda_3(x_1-2x_2)(x_2-2x_1)\]
and for $\lambda_0\geq 0$ the set $\Lambda^{\lambda_0}(\xb;u)$ is given by
\[\Lambda^{\lambda_0}(\xb;(1,0))=\left\{(\lambda_1,\lambda_2,\lambda_3)\mv\begin{array}{l}\nabla _x{\cal L}(\xb,\lambda_0,\lambda_1,\lambda_2,\lambda_3)=(-\lambda_0-\lambda_1,-\lambda_2)=0\\
(\lambda_1,\lambda_2,\lambda_3)\in N((-1,0,0);\R_-^3)=\{0\}\times \R^2_+\\
\lambda_0+\vert \lambda_1\vert +\vert \lambda_2\vert>0\end{array}\right\}=\emptyset.\]
Hence the first-order optimality conditions of Theorem \ref{ThBaseOptCond} are violated and $\xb$ is not a local minimizer.

Obviously $T_{\rm lin}(\xb)=\R^2_+$ is generated by the two
directions $u=(1,0)^T$ and $v=(0,1)^T$. We have just established
$\Lambda^0(\xb;u)=\emptyset$ showing metric subregularity of the
constraint mapping in direction $u$ by Lemma
\ref{LemSuffDirSubreg}. In the same way one can also show metric
subregularity in direction $v$ and thus WDMSCQ and consequently
GGCQ is fulfilled. Note that $T(\xb;{\cal F})=\{(x_1,x_2)\mv 0\leq
x_1\leq x_2/2\}\cup\{(x_1,x_2)\mv 0\leq x_2\leq
x_1/2\}\}\not=T_{\rm lin}(\xb)$, i.e. the so-called {\em Abadie
constraint qualification} fails to hold but nevertheless we were
able to prove GGCQ.

Further, the mapping $M(x)=F(x)-\Omega$ is not metrically
subregular. E.g., consider the points $x_t:=(t, t)$ for arbitrary
$t>0$ satisfying $\lim_{t\downarrow 0}(x_t-\xb)/t=(1,1)$,
$F(x_t)=(-t+t^{\frac 32},-t,t^2)\not \in\Omega$,
$\dist{0,M(x_t)}=t^2$ and $\dist{x_t,M^{-1}(0)}=t/\sqrt{5}$ for
$0<t<1$, showing that $M$ is not metrically subregular in
direction $(1,1)$. Similar arguments show that metric
subregularity also fail to hold in every direction $u$ with
$(u_1-2u_2)(u_2-2u_1)\geq 0$. Note that the lack of metric
subregularity would also follow from the lack of the Abadie
constraint qualification.
\end{example}

We consider now second-order sufficient conditions. Consider the
following definition owing to Penot \cite{Pen98}:
\begin{definition}\label{DepEssLocMin}
We say that $\xb\in\R^n$ is an {\em essential local minimizer of
second order} for problem \eqref{EqMPDC}, if $\xb$ is feasible and
there exists some neighborhood $U$ of $\xb$ and some real
$\beta>0$ such that
\[\max\{f(x)- f(\xb), \dist{F(x),\Omega}\}\geq \beta\norm{x-\xb}^2\ \forall x\in U.\]
\end{definition}
Obviously at an essential local minimizer of second order the following {\em quadratic growth condition} is fulfilled:
\[f(x)\geq f(\xb)+\beta\norm{x-\xb}^2\ \forall x\in{\cal F}\cap U.\]
This quadratic growth  condition is also sufficient for $\xb$ to
be an essential local minimizer of second order, if the constraint
mapping $M$ is metrically subregular at $(\xb,0)$ and $f$ is
Lipschitz near $\xb$. To see this one could use similar arguments
as in \cite[Section 3]{Gfr06} by noting that convexity of $\Omega$
is not needed and the assumption of metric regularity used in
\cite{Gfr06} can be replaced by assuming metric subregularity.

\begin{theorem}
Assume that $\xb$ is a local minimizer but not an essential local
minimizer of second order for the problem \eqref{EqMPDC}. Then
there exists a twice continuously differentiable function
$h=(\delta f,\delta F):\R^n\to\R\times\R^m$ with $h(\xb)=0$,
$\nabla h(\xb)=0$, $\nabla^2 h(\xb)=0$ such that $\xb$ is not a
local minimizer for the problem
\[\min (f+\delta f)(\xb)\mbox{ subject to }(F+\delta F)(x)\in\Omega.\]
\end{theorem}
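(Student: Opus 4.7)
The plan is to construct $h$ as a sum of small smooth bumps localised around a suitable sequence $x_k\to\xb$ along which the second-order growth condition fails. Each individual bump will simultaneously push $F(x_k)$ into $\Omega$ and drop the objective below $f(\xb)$, so that the perturbed problem will admit feasible points strictly below the value $f(\xb)$ arbitrarily close to $\xb$, while the scaling of the bumps will be chosen so that all contributions vanish up to second order at $\xb$.

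First I would negate Definition \ref{DepEssLocMin}: for every positive integer $k$ there exists $x_k\in\R^n$ with $0<\|x_k-\xb\|<1/k$ and $\max\{f(x_k)-f(\xb),\dist{F(x_k),\Omega}\}\leq\epsilon_k r_k^2$, where $r_k:=\|x_k-\xb\|$ and $\epsilon_k>0$ with $\epsilon_k\to 0$. Passing to a subsequence I may assume $r_{k+1}<r_k/4$. Since $\Omega$ is closed, I pick $y_k\in\Omega$ attaining $\|F(x_k)-y_k\|=\dist{F(x_k),\Omega}$ and set $a_k:=F(x_k)-y_k$ and $b_k:=2\epsilon_k r_k^2$; then $\|a_k\|\leq\epsilon_k r_k^2$ and $b_k> f(x_k)-f(\xb)$.

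Next, fix a bump $\phi\in C^\infty(\R^n;[0,1])$ with $\phi(0)=1$ and $\phi(y)=0$ for $\|y\|\geq 1$, set $\rho_k:=r_k/4$, and define
\[
\delta f(x):=-\sum_k b_k\,\phi\Bigl(\frac{x-x_k}{\rho_k}\Bigr),\qquad \delta F(x):=-\sum_k a_k\,\phi\Bigl(\frac{x-x_k}{\rho_k}\Bigr).
\]
Because $r_{k+1}<r_k/4$, the balls $B(x_k,\rho_k)$ are pairwise disjoint and each excludes $\xb$, so locally at most one summand contributes and $h=(\delta f,\delta F)$ is $C^\infty$ on $\R^n\setminus\{\xb\}$. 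The pointwise bounds $\sup_{B(x_k,\rho_k)}\|\nabla^j(b_k\phi((\cdot-x_k)/\rho_k))\|=O(b_k/\rho_k^j)=O(\epsilon_k r_k^{2-j})$ for $j=0,1,2$, and the analogous bounds for $a_k$, show that $h$ and all of its first and second derivatives extend continuously to $\xb$ with value zero; hence $h\in C^2(\R^n)$ with $h(\xb)=0$, $\nabla h(\xb)=0$ and $\nabla^2 h(\xb)=0$.

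Finally, at each $x_k$ one has $(F+\delta F)(x_k)=F(x_k)-a_k=y_k\in\Omega$ and $(f+\delta f)(x_k)=f(x_k)-b_k<f(\xb)=(f+\delta f)(\xb)$, so $\xb$ is not a local minimiser of the perturbed problem. The main technical point is ensuring $C^2$-regularity of $h$ at $\xb$ with vanishing Hessian there; this is what dictates the scaling $\rho_k$ comparable to $r_k$ together with $b_k,\|a_k\|=O(\epsilon_k r_k^2)$, and it succeeds precisely because the failure of essential local minimality of second order yields the extra factor $\epsilon_k\to 0$.
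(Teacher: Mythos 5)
Your construction is correct: the negation of Definition \ref{DepEssLocMin} supplies points $x_k\to\xb$ with $\max\{f(x_k)-f(\xb),\dist{F(x_k),\Omega}\}\leq\epsilon_k\|x_k-\xb\|^2$, $\epsilon_k\to 0$, and your disjointly supported bumps of height $O(\epsilon_k r_k^2)$ on balls of radius $\rho_k\sim r_k$ give exactly the $O(\epsilon_k r_k^{2-j})$ bounds on the $j$-th derivatives needed for $h\in C^2$ with $h(\xb)=0$, $\nabla h(\xb)=0$, $\nabla^2h(\xb)=0$, while making each $x_k$ feasible with objective value below $f(\xb)$. The paper itself only cites the proof of \cite[Theorem 3.5]{Gfr06} (noting convexity of $\Omega$ is not needed there), and your argument is essentially that same localized-perturbation construction, written out in full.
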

\begin{proof}
Follows from  the proof of \cite[Theorem 3.5]{Gfr06} by recognizing that
convexity of $\Omega$ is not needed in that proof.
\end{proof}

From this statement it follows that a characterization of $\xb$
being an essential local minimizer of second order is the weakest
possible sufficient second-order optimality condition which uses
solely function values and derivatives up to order 2 at the point
$\xb$.

For each $u\in T_{\rm lin}(\xb)$ we now denote by ${\cal P}(u)$ the index set
\[{\cal P}(u):=\{i\in{1,\ldots,\bar p}\mv \nabla F(\xb)u\in T(F(\xb); P_i)\}\]
Since $T(F(\xb);\Omega)=\bigcup_{i=1}^{\bar p}T(F(\xb);P_i)$ we have ${\cal P}(u)\not=\emptyset$ for every $u\in T_{\rm lin}(\xb)$.
\begin{lemma}\label{LemCharEssLocMin}
Let $\xb$ be feasible for the problem \eqref{EqMPDC} and let $f$, $F$ be twice Fr\'echet differentiable at $\xb$. Then the following statements are equivalent:
\begin{enumerate}
\item[(a)] $\xb$ is an essential local minimizer of second order.
\item[(b)] For every nonzero critical direction $0\not=u\in{\cal
C}(\xb)$ and every $i\in{\cal P}(u)$ there  exists some multiplier
$(\lambda_0,\lambda)\in\hat N(\nabla f(\xb)u;\R_-)\times\hat
N(\nabla F(\xb)u;T(F(\xb);P_i))$ with $\nabla_x {\cal
L}(\xb,\lambda_0,\lambda)=0$ and
\begin{equation}
u^T\nabla_x^2 {\cal L}(\xb,\lambda_0,\lambda)u>0.
\end{equation}
\item[(c)] For every nonzero critical direction $0\not=u\in{\cal C}(\xb)$ there does not exist $v\in\R^n$ with
\begin{eqnarray}\label{EqEssMinCond1}
\nabla f(\xb)v+\frac 12 u^T\nabla^2f(\xb)u &\in& T(\nabla f(\xb)u;\R_-)\\
\label{EqEssMinCond2} \nabla F(\xb)v+\frac 12 u^T\nabla^2F(\xb)u&\in& T(\nabla F(\xb)u; T(F(\xb);\Omega))
\end{eqnarray}
\end{enumerate}
\end{lemma}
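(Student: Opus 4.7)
The plan is to prove the three statements equivalent by the cyclic chain $(a) \Rightarrow (c) \Rightarrow (b) \Rightarrow (a)$. The link $(c) \Leftrightarrow (b)$ will be pure convex duality via Motzkin's theorem of the alternative, applicable because every cone in sight is polyhedral, whereas the two implications touching $(a)$ will both be proved by contraposition: for $(a) \Rightarrow (c)$ I will build an explicit test curve $t \mapsto \xb + tu + t^2 v$ that breaks the quadratic growth estimate, and for $(b) \Rightarrow (a)$ I will expand the Lagrangian to second order along a presumed bad sequence and derive a contradiction.

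For $(a) \Rightarrow (c)$, starting from a critical direction $0 \neq u \in \mathcal{C}(\xb)$ together with some $v \in \R^n$ satisfying the two containments in $(c)$, I will use the representation \eqref{EqTanTanCone} to pick $i \in \mathcal{P}(u)$ with $w := \nabla F(\xb) v + \tfrac{1}{2} u^T \nabla^2 F(\xb) u \in T(\nabla F(\xb) u; T(F(\xb); P_i))$. The very computation already carried out inside the proof of Lemma \ref{LemBasicPropCone} then places $F(\xb) + t \nabla F(\xb) u + t^2 w$ into $P_i \subset \Omega$ for every sufficiently small $t > 0$. A second-order Taylor expansion of $F$ along $x_k := \xb + t_k u + t_k^2 v$ with $t_k \downarrow 0$ gives $F(x_k) = F(\xb) + t_k \nabla F(\xb) u + t_k^2 w + o(t_k^2)$, so $\dist{F(x_k),\Omega} = o(t_k^2)$; a parallel expansion of $f$, split into the sub-cases $\nabla f(\xb) u < 0$ (trivial) and $\nabla f(\xb) u = 0$ (where the scalar containment reduces to $\nabla f(\xb) v + \tfrac12 u^T \nabla^2 f(\xb) u \leq 0$), yields $f(x_k) - f(\xb) \leq o(t_k^2)$. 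Together with $\norm{x_k-\xb}^2 = t_k^2 \norm{u}^2 + o(t_k^2)$ this contradicts the quadratic growth required by Definition \ref{DepEssLocMin}.

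For $(c) \Leftrightarrow (b)$, I fix $0 \neq u \in \mathcal{C}(\xb)$ and $i \in \mathcal{P}(u)$ and reformulate $(c)$ at the pair $(u,i)$, using \eqref{EqTanTanCone} to decompose the union over $i$, as infeasibility in $v$ of the linear system $A v + c_0 \in K$, where $A := \binom{\nabla f(\xb)}{\nabla F(\xb)}$, $c_0 := \tfrac12 \binom{u^T \nabla^2 f(\xb) u}{u^T \nabla^2 F(\xb) u}$, and $K := T(\nabla f(\xb) u; \R_-) \times T(\nabla F(\xb) u; T(F(\xb); P_i))$. Because all factors of $K$ are polyhedral cones, $A(\R^n) + K$ is polyhedral and hence closed, and Motzkin's transposition theorem converts infeasibility into the existence of $(\lambda_0,\lambda) \in K^\circ$ with $A^T(\lambda_0,\lambda) = 0$ and $(\lambda_0,\lambda)^T c_0 > 0$. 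Unpacking $K^\circ = \hat N(\nabla f(\xb) u; \R_-) \times \hat N(\nabla F(\xb) u; T(F(\xb); P_i))$ (which uses that the polar of the tangent cone to a convex cone at a point of that cone equals the regular normal cone there), together with $A^T(\lambda_0,\lambda) = \nabla_x \mathcal{L}(\xb,\lambda_0,\lambda)^T$ and $(\lambda_0,\lambda)^T c_0 = \tfrac12 u^T \nabla_x^2 \mathcal{L}(\xb,\lambda_0,\lambda) u$, delivers exactly $(b)$.

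For $(b) \Rightarrow (a)$, assuming $(a)$ fails I pick a sequence $x_k \to \xb$ with $\max\{f(x_k)-f(\xb), \dist{F(x_k),\Omega}\} < t_k^2/k$, $t_k := \norm{x_k-\xb}$. A convergent subsequence of $u_k := (x_k-\xb)/t_k$ with unit-norm limit $u$, combined with the first-order Taylor estimates obtained by dividing both defining inequalities by $t_k$, shows $u \in \mathcal{C}(\xb) \setminus \{0\}$; a nearest-point selection $y_k \in \Omega$ with $\norm{F(x_k)-y_k} \leq t_k^2/k$, pigeon-holed to a single polyhedron $P_i$ by finiteness of $\{P_1,\dots,P_{\bar p}\}$, produces $\nabla F(\xb) u \in T(F(\xb); P_i)$ in the limit, so that $i \in \mathcal{P}(u)$. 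Invoking $(b)$ at $(u,i)$ gives $(\lambda_0,\lambda)$ with $\nabla_x \mathcal{L}(\xb,\lambda_0,\lambda) = 0$ and $u^T \nabla_x^2 \mathcal{L}(\xb,\lambda_0,\lambda) u > 0$. A second-order Taylor expansion of $\mathcal{L}(\cdot,\lambda_0,\lambda)$ around $\xb$ then yields the lower bound $\mathcal{L}(x_k,\lambda_0,\lambda) - \mathcal{L}(\xb,\lambda_0,\lambda) = \tfrac{t_k^2}{2} u^T \nabla_x^2 \mathcal{L}(\xb,\lambda_0,\lambda) u + o(t_k^2)$, while the bounds $\lambda_0 \geq 0$, $f(x_k)-f(\xb) < t_k^2/k$, $\norm{F(x_k)-y_k} \leq t_k^2/k$, and $\lambda^T(y_k - F(\xb)) \leq 0$ (the last because $y_k - F(\xb) \in T(F(\xb); P_i)$ for large $k$ and $\lambda \in T(F(\xb); P_i)^\circ$) give the matching upper bound $\mathcal{L}(x_k,\lambda_0,\lambda) - \mathcal{L}(\xb,\lambda_0,\lambda) \leq (\lambda_0 + \norm{\lambda}) t_k^2/k$. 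These contradict each other for large $k$. The most delicate step I anticipate is precisely this subsequence selection: I must first stabilize the polyhedron $P_i$ along the sequence and then transfer this stabilization into the conic inclusion $\nabla F(\xb) u \in T(F(\xb); P_i)$, both of which rest on the finiteness of the decomposition $\Omega = \bigcup_i P_i$ and on the local identification of $P_i - F(\xb)$ with $T(F(\xb); P_i)$ near the origin.
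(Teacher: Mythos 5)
Your proposal is correct, but it reaches the equivalences by a genuinely different route for the two implications involving (a). The paper first observes that near $\xb$ one has $\dist{F(x),\Omega}=\min_{i\in\bar{\cal P}}\dist{F(x),P_i}$, so that (a) holds iff $\xb$ is an essential local minimizer of second order for each convex problem $\min f$ subject to $F(x)\in P_i$, and then it simply cites the known characterization of that property for convex constraints (Theorems 5.4 and 5.11 of \cite{Gfr06}) to get (a)$\Leftrightarrow$(b); only (b)$\Leftrightarrow$(c) is argued directly, via the generalized Farkas lemma $K^\circ=\Range A^T+L^\circ$, which is essentially identical to your Motzkin/polar-cone step (including the need for closedness of $\Range A+K$, automatic here by polyhedrality). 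You instead close a cycle (a)$\Rightarrow$(c)$\Rightarrow$(b)$\Rightarrow$(a) and re-prove the two (a)-implications from scratch: for (a)$\Rightarrow$(c) the second-order arc $\xb+tu+t^2v$, whose feasibility up to $o(t^2)$ rests on the elementary polyhedral fact $F(\xb)+t\nabla F(\xb)u+t^2w\in P_i$ for small $t$ (exactly the computation embedded in the proof of Lemma \ref{LemBasicPropCone}), and for (b)$\Rightarrow$(a) the two-sided estimate of ${\cal L}(x_k,\lambda_0,\lambda)-{\cal L}(\xb,\lambda_0,\lambda)$ along a quadratic-growth-violating sequence, where the pigeonholing of the projections $y_k$ into a single $P_i$ and the inclusions $P_i-F(\xb)\subset T(F(\xb);P_i)$, $\hat N(\nabla F(\xb)u;T(F(\xb);P_i))\subset T(F(\xb);P_i)^\circ$ do the work. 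All the individual steps check out (the sign conventions in the Farkas step, the identification $(\lambda_0,\lambda)^Tc_0=\tfrac12 u^T\nabla^2_x{\cal L}u$, and the sub-case $\nabla f(\xb)u<0$ where the scalar tangent condition is vacuous but the descent is first order). What your version buys is self-containedness --- it does not lean on the external second-order theory of \cite{Gfr06} --- at the cost of essentially re-deriving, in the polyhedral setting, the content of the cited theorems; the paper's version is shorter but opaque without that reference.
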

\begin{proof}Let $\bar {\cal P}:=\{i\in\{1,\ldots,\bar p\}\mv F(\xb)\in P_i\}$. Then there is a neighborhood $U$ of $\xb$ such that $\dist{F(x),\Omega}=\min_{i\in\bar{\cal P}}\dist{F(x),P_i}$ $\forall x\in U$ and therefore $\xb$ is an essential local minimizer of second order if and only if for each $i\in\bar{\cal P}$ the point $\xb$ is an essential local minimizer of second order for the problem
\[\min f(x)\mbox{ subject to } F(x)\in P_i\]
Hence, by using \cite[Theorems 5.4,5.11]{Gfr06} we obtain that $\xb$ is an essential local minimizer of second order if and only if for each $i\in\bar{\cal P}$ and every $u$ with $\nabla f(\xb)u\leq 0$ and $\nabla F(\xb)u\in T(F(\xb);P_i)$ there is some multiplier $(\lambda_0,\lambda)\in\R_+\times\hat N(F(\xb);P_i)=\hat N(0;\R_-)\times \hat N(0;T(F(\xb);P_i))$ with $\nabla_x {\cal L}(\xb,\lambda_0,\lambda)=0$ and
$u^T\nabla_x^2 {\cal L}(\xb,\lambda_0,\lambda)u>0$. Hence $\lambda_0\nabla f(\xb)u\leq 0$ and $\lambda^T \nabla F(\xb)u\leq 0$ and from $\nabla_x {\cal L}(\xb,\lambda_0,\lambda)u=0$ we conclude $\lambda_0\nabla f(\xb)u=-\lambda^T\nabla F(\xb)u=0$. Thus $(\lambda_0,\lambda)\in\hat N(\nabla f(\xb)u;\R_-)\times\hat N(\nabla F(\xb)u;T(F(\xb);P_i))$ and this establishes the equivalence (a)$\Leftrightarrow$(b).

Next we  show the equivalence (b)$\Leftrightarrow(c)$: Let
$0\not=u\in{\cal C}(\xb)$ and $i\in{\cal P}(u)$ be fixed and define
$A:=-(\nabla f(x)^T\ \vdots\ \nabla F(\xb)^T)$, $L:=\hat N(\nabla
f(\xb)u;\R_-)\times\hat N(\nabla F(\xb)u;T(F(\xb);P_i))$,
$K:=\{\xi\in L\mv A\xi=0\}$ and $b:=(\frac 12 u^T\nabla^2
f(\xb)u,(\frac 12u^T\nabla F(\xb)u)^T)^T$. We have  $b\not\in
K^\circ$ if and only if there is some $\xi=(\lambda_0,\lambda)\in
L$ satisfying $A\xi=-\nabla_x{\cal L}(\xb,\lambda_0,\lambda)=0$
such that $\xi^Tb=\frac 12 u^T\nabla_x^2{\cal
L}(\xb,\lambda_0,\lambda)u>0$. Since $L$ is a
polyhedral cone, by the generalized Farkas Lemma \cite[Proposition 2.201]{BonSh00} we have
$K^\circ= \Range A^T + L^\circ$ and, together with $L^\circ=T(\nabla
f(\xb)u;\R_-)\times T(\nabla F(\xb)u; T(F(\xb);P_i))$, it follows that
statement (b) is equivalent to the condition that for every $0\not=u\in{\cal
C}(\xb)$ and each $i\in{\cal P}(u)$ the system
\begin{eqnarray*}
\nabla f(\xb)v+\frac 12 u^T\nabla^2f(\xb)u &\in& T(\nabla f(\xb)u;\R_-)\\
\nabla F(\xb)v+\frac 12 u^T\nabla^2F(\xb)u&\in& T(\nabla F(\xb)u; T(F(\xb);P_i))
\end{eqnarray*}
does not have a solution $v$. Noting that $T(\nabla F(\xb)u;
T(F(\xb);\Omega))=\bigcup_{i\in{\cal P}(u)}T(\nabla F(\xb)u;
T(F(\xb);P_i))$ we conclude (b)$\Leftrightarrow$(c).
\end{proof}

\begin{theorem}\label{ThEssLocMin}
Let $\xb$ be feasible for the problem  \eqref{EqMPDC} and let $f$, $F$ be twice Fr\'echet differentiable at $\xb$. If $\xb$ is an essential local minimizer of second order then for every critical direction $0\not=u\in{\cal C}(\xb)$ there is some pair $(\lambda_0,\lambda)\in \R_+\times\R^m$ with $\lambda\in\Lambda^{\lambda_0}(\xb;u)$ such that
\begin{equation}\label{EqPosDefLagr}u^T\nabla_x^2{\cal L}(\xb,\lambda_0,\lambda)u>0.
\end{equation}
Conversely, if for every critical direction $0\not=u \in{\cal C}(\xb)$ there is some pair $(\lambda_0,\lambda)\in \R_+\times\R^m$ fulfilling $\lambda\in\hat \Lambda^{\lambda_0}(\xb;u)$ and \eqref{EqPosDefLagr}, then $\xb$ is an essential local minimizer of second order.
\end{theorem}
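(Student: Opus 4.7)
The plan is to derive both directions of the theorem from Lemma~\ref{LemCharEssLocMin}, which characterizes the essential local minimizer property via a multiplier condition localized at each polyhedron $P_i$, $i\in\mathcal{P}(u)$. The sufficient direction will be an immediate combining argument, whereas the necessary direction will require lifting one of those local multipliers to the directional limiting normal cone of~$\Omega$.

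For the sufficient direction, I would take a pair $(\lambda_0,\lambda)\in\R_+\times\R^m$ with $\lambda\in\hat\Lambda^{\lambda_0}(\xb;u)$ and $u^T\nabla_x^2\mathcal{L}(\xb,\lambda_0,\lambda)u>0$, and exploit that $T(\nabla F(\xb)u;T(F(\xb);\Omega))=\bigcup_{i\in\mathcal{P}(u)}T(\nabla F(\xb)u;T(F(\xb);P_i))$ is a finite union of convex polyhedral cones. Polarizing gives
\[\hat N(\nabla F(\xb)u;T(F(\xb);\Omega))=\bigcap_{i\in\mathcal{P}(u)}\hat N(\nabla F(\xb)u;T(F(\xb);P_i)),\]
so $\lambda$ belongs to every $\hat N(\nabla F(\xb)u;T(F(\xb);P_i))$. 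Combined with the stationarity identity and the positive curvature condition, the same pair $(\lambda_0,\lambda)$ verifies Lemma~\ref{LemCharEssLocMin}(b) for every admissible $i$, and that lemma delivers the essential minimizer property.

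For the necessary direction, I would apply Lemma~\ref{LemCharEssLocMin}(b) to produce, for each $i\in\mathcal{P}(u)$, a pair $(\lambda_0^i,\lambda^i)\in\hat N(\nabla f(\xb)u;\R_-)\times\hat N(\nabla F(\xb)u;T(F(\xb);P_i))$ with $\nabla_x\mathcal{L}(\xb,\lambda_0^i,\lambda^i)=0$ and the positive Hessian condition, and then select a distinguished index $i_0$ whose multiplier can be promoted to $N(F(\xb);\Omega;\nabla F(\xb)u)$ via inclusion~\eqref{EqInclNormalCone2}. Concretely, I would look for $\bar v\in T(\nabla F(\xb)u;T(F(\xb);P_{i_0}))$ that (i)~does not belong to $T(\nabla F(\xb)u;T(F(\xb);P_j))$ for $j\in\mathcal{P}(u)\setminus\{i_0\}$ and (ii)~satisfies $a_{i_0l}^T\bar v=0$ for every $l$ in the support of the conic representation of $\lambda^{i_0}$ in the constraint gradients $a_{i_0l}$. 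Reproducing the calculation from the proof of Lemma~\ref{LemBasicPropCone}, the second-order points $F(\xb)+t\nabla F(\xb)u+t^2\bar v$ then lie in $P_{i_0}$ and in no other polyhedron of the decomposition; their regular normal cones coincide with $\hat N(\bar v;T(\nabla F(\xb)u;T(F(\xb);P_{i_0})))$, and~\eqref{EqInclNormalCone2} places $\lambda^{i_0}$ in $N(F(\xb);\Omega;\nabla F(\xb)u)$, completing the argument.

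The main obstacle I foresee is securing the existence of $(i_0,\bar v)$ with properties (i)--(ii). After discarding any index whose tangent cone is absorbed by the union of the others, a $\bar v$ with (i) can be chosen in the complement of the residual union; intersecting with the linear subspace $\{v:a_{i_0l}^T v=0,\ l\in\mathrm{supp}(\lambda^{i_0})\}$ and invoking a relative-interior argument on the face of $T(\nabla F(\xb)u;T(F(\xb);P_{i_0}))$ cut out by $\mathrm{supp}(\lambda^{i_0})$ then delivers~(ii). Executing this combinatorial selection cleanly is the delicate step of the proof.
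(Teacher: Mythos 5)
Your sufficiency argument coincides with the paper's: pass from $\lambda\in\hat \Lambda^{\lambda_0}(\xb;u)$ to membership in every $\hat N(\nabla F(\xb)u;T(F(\xb);P_i))$, $i\in{\cal P}(u)$, by polarizing the finite union of cones, check $\lambda_0\in\hat N(\nabla f(\xb)u;\R_-)$ via \eqref{EqLambda0Nabla_f}, and invoke the equivalence (a)$\Leftrightarrow$(b) of Lemma \ref{LemCharEssLocMin}. That half is correct.

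The necessity half is where you diverge from the paper, and there is a genuine gap. The multipliers $\lambda^i$ that Lemma \ref{LemCharEssLocMin}(b) delivers lie in $\hat N(\nabla F(\xb)u;T(F(\xb);P_i))$ for a \emph{single} polyhedron, and such a vector may fail to belong to $N(F(\xb);\Omega;\nabla F(\xb)u)$ for \emph{every} choice of $\bar v$: the directional limiting normal cone of the union is in general strictly smaller than the union over $i$ of the per-polyhedron regular normal cones. Concretely, take $\Omega=Q_{\rm EC}$, $F(\xb)=0$, $\nabla F(\xb)u=0$. Then $\lambda=(1,-1)\in\hat N(0;\R_-\times\{0\})$, but $(1,-1)\notin N(0;Q_{\rm EC})\supset N(0;Q_{\rm EC};0)$, so no second-order point $F(\xb)+t\nabla F(\xb)u+t^2\bar v$ can certify it; your condition (ii) forces $\bar v$ into the face $\{v\mv v_1=v_2=0\}=\{0\}$, which violates (i), and the multiplier attached to the other polyhedron can suffer the same obstruction. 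So the delicate step you flag is not merely delicate: you would have to prove that \emph{some} $i_0$ admits a multiplier satisfying Lemma \ref{LemCharEssLocMin}(b) whose supporting face escapes all other polyhedra of the decomposition, and nothing in the proposed combinatorial selection guarantees this. A second, smaller omission: membership in $\Lambda^{\lambda_0}(\xb;u)$ also requires $\lambda_0+\norm{\lambda_1}\not=0$, which Lemma \ref{LemCharEssLocMin}(b) does not provide.

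The paper avoids all of this by a perturbation argument: if $\xb$ is an essential local minimizer of second order with constant $\beta$, then $\xb$ is an ordinary local minimizer of $\min f(x)-\beta\norm{x-\xb}^2$ subject to $F(x)\in\Omega$, and Theorem \ref{ThBaseOptCond} applied to this perturbed problem directly yields some $\lambda\in\Lambda^{\lambda_0}(\xb;u)$ --- hence already in the directional limiting normal cone and satisfying the nontriviality condition --- whose second-order inequality for the perturbed Lagrangian gives $u^T\nabla_x^2{\cal L}(\xb,\lambda_0,\lambda)u\geq 2\lambda_0\beta\norm{u}^2$. If you want to repair your route, that perturbation trick is the missing idea; the machinery behind Theorem \ref{ThBaseOptCond} (directional metric subregularity) is what places the multiplier in the correct, smaller cone.
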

\begin{proof}
Firstly assume that $\xb$ is an essential local minimizer of
second order and consider the problem
\begin{eqnarray*}\min&& f(x)-\beta\norm{x-\xb}^2\\
\mbox{subject to }&& F(x)\in\Omega,
\end{eqnarray*}
where $\beta>0$ is chosen according to the Definition
\ref{DepEssLocMin}. Since $\xb$ is a local minimizer of the above
problem, by Theorem \ref{ThBaseOptCond}, we can easily get the
first part of this theorem.

To show the second assertion we use the equivalence (a)$\Leftrightarrow$(b) of Lemma \ref{LemCharEssLocMin}. Let $0\not=u\in{\cal C}(\xb)$ be arbitrarily fixed and choose $\lambda_0\geq 0$ and $\lambda\in\hat\Lambda^{\lambda_0}(\xb;u)$ with $u^T\nabla_x^2{\cal L}(\xb,\lambda_0,\lambda)u>0$. By the definition of $\hat\Lambda^{\lambda_0}(\xb;u)$ we have $\nabla_x{\cal L}(\xb,\lambda_0,\lambda)=0$ and we will now show that $(\lambda_0,\lambda)\in\hat N(\nabla f(\xb)u;\R_-)\times\hat N(\nabla F(\xb)u;T(F(\xb);P_i))$ for each $i\in{\cal P}(u)$. Because of $\hat\Lambda^{\lambda_0}(\xb;u)\subset \Lambda^{\lambda_0}(\xb;u)$ and \eqref{EqLambda0Nabla_f} we have $\lambda_0\in \hat N(\nabla f(\xb)u;\R_-)$. Further
\begin{eqnarray*}\lambda&\in&\hat N(\nabla F(\xb)u;T(F(\xb);\Omega))=\hat N(0;T(\nabla F(\xb)u;T(F(\xb);\Omega)))\\
&=&\hat N(0;\bigcup_{i\in{\cal P}(u)}T(\nabla F(\xb)u;T(F(\xb);P_i)))
=\bigcap_{i\in{\cal P}(u)}\hat N(0;T(\nabla F(\xb)u;T(F(\xb);P_i)))\\
&=&\bigcap_{i\in{\cal P}(u)}\hat N(\nabla F(\xb)u;T(F(\xb);P_i))\end{eqnarray*}
and thus our assertion is proved.
\end{proof}

In case of $\bar p=1$, i.e. $\Omega$ is a convex polyhedron, we
have $\hat \Lambda^{\lambda_0}(\xb;u)=\Lambda^{\lambda_0}(\xb;u)$
$\forall \lambda_0\geq 0$, $\forall u\in{\cal C}(\xb)$ by Lemma
\ref{LemRelationsMult} and thus \eqref{EqPosDefLagr} is an
equivalent condition for $\xb$ being an essential local minimizer
of second order. In particular, this is the case for the nonlinear
programming problem, where \eqref{EqPosDefLagr} is nothing else
than the second-order sufficient condition of nonlinear
programming \cite{Ben80, Iof79c}.

We will now show that also under LICQ(u) the sets
$\hat\Lambda^{\lambda_0}(\xb;u)$ and $\Lambda^{\lambda_0}(\xb;u)$
coincide.
\begin{lemma}\label{LemBasLICQ}
Assume that LICQ(u) holds for $u\in T_{\rm lin}(\xb)$. Then
$\Lambda^0(\xb;u)=\emptyset$.
\end{lemma}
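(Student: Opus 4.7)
The plan is to show that any candidate multiplier in $\Lambda^0(\xb;u)$ must in fact be zero, which contradicts the nontriviality condition $\lambda_0+\norm{\lambda_1}\not=0$ built into the definition: with $\lambda_0=0$, this forces $\lambda_1\not=0$.

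First, I would invoke Lemma \ref{LemBasicPropCone} applied to $\Omega$ at $F(\xb)$ in the direction $\nabla F(\xb)u$: representation \eqref{EqInclNormalCone1} gives that any $\lambda\in N(F(\xb);\Omega;\nabla F(\xb)u)$ lies in $\hat N(v;C)$ for some $v\in C:=T(\nabla F(\xb)u;T(F(\xb);\Omega))$.

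Next, by LICQ(u) there is a subspace $L\subset\R^m$ with $C+L\subset C$ and $\nabla F(\xb)\R^n+L=\R^m$. Since $L$ is a subspace, for every $\ell\in L$ and every $t>0$ we have $v+t\ell\in C$ and $v-t\ell\in C$. Plugging these points into the defining limsup \eqref{DefEpsNormals} of the Fr\'echet normal cone at $v$ and letting $t\downarrow 0$ yields $\lambda^T\ell\leq 0$ and $-\lambda^T\ell\leq 0$, hence $\lambda^T\ell=0$. Therefore $\lambda\in L^\perp$.

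Finally, the stationarity condition $\nabla_x{\cal L}(\xb,0,\lambda)=0$ reduces to $\nabla F(\xb)^T\lambda=0$, i.e.\ $\lambda\perp \nabla F(\xb)\R^n$. Combined with $\lambda\in L^\perp$ and the range equality $\nabla F(\xb)\R^n+L=\R^m$ from LICQ(u), this gives $\lambda\in (\R^m)^\perp=\{0\}$, contradicting $\lambda_1\not=0$. Hence $\Lambda^0(\xb;u)=\emptyset$. The only step requiring care is the direction-averaging argument producing $\lambda\in L^\perp$, which is the standard way of extracting two-sided orthogonality from a Fr\'echet normal when the underlying cone is invariant under a linear subspace; the rest is a clean polarity decomposition using LICQ(u).
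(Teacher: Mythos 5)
Your proof is correct and follows essentially the same route as the paper: both invoke Lemma \ref{LemBasicPropCone} to place $\lambda$ in $\hat N(v;T(\nabla F(\xb)u;T(F(\xb);\Omega)))$, exploit the invariance of that cone under the subspace $L$ to get $\lambda\in L^\perp$, and combine this with $\nabla F(\xb)^T\lambda=0$ and $\nabla F(\xb)\R^n+L=\R^m$ to force $\lambda=0$. The only cosmetic difference is that the paper decomposes $\lambda=\nabla F(\xb)v+w$ and pairs it with itself to reach the contradiction $0<\lambda^T\lambda=\lambda^Tw=0$, whereas you phrase the same linear algebra as $\lambda\in(\nabla F(\xb)\R^n+L)^\perp=\{0\}$.
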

\begin{proof}
Assume that there is some $\lambda\in\Lambda^0(\xb;u)$. Then $\lambda\not=0$ and  there is some $v\in\R^n$ and $w\in L$ such that $\nabla F(\xb)v+ w=\lambda$, implying
\[0<\lambda^T\lambda=\lambda^T\nabla F(\xb)v+\lambda^Tw=\lambda^T w\]
because of $\nabla_x{\cal L}(\xb;0,\lambda)=\lambda^T\nabla
F(\xb)=0$. By Lemma \ref{LemBasicPropCone} there is some $z\in
T(\nabla F(\xb)u;T(F(\xb);\Omega)))$ with $\lambda\in\hat
N(z;T(\nabla F(\xb)u;T(F(\xb);\Omega)))$ and therefore, since
$z+L\subset T(\nabla F(\xb)u;T(F(\xb);\Omega))$ and $\alpha w\in L$ $\forall\alpha\in\R$, we obtain $\lambda^T(z+\alpha w)=\lambda^Tz +\alpha\lambda^Tw\leq0$ $\forall \alpha\in\R$ implying the contradiction $\lambda^Tw=0$.
\end{proof}
\begin{proposition}\label{PropLICQ}
Assume that $\xb$ is B-stationary for the problem \eqref{EqMPDC}. Then for every critical direction $u\in{\cal C}(\xb)$ fulfilling  LICQ(u) there is a unique element $\lambda_u\in\R^m$ such that
\[\Lambda^1(\xb;u)=\hat \Lambda^1(\xb;u)=\{\lambda_u\}.\]
\end{proposition}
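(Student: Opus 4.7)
My plan is to split the proof into uniqueness within $\Lambda^1(\xb;u)$ and existence of some element in $\hat\Lambda^1(\xb;u)$. Lemma~\ref{LemBasLICQ} already gives $\Lambda^0(\xb;u)=\emptyset$, so any multiplier in $\Lambda^{\lambda_0}(\xb;u)$ must have $\lambda_0>0$ and may be rescaled to $\lambda_0=1$. Together with the inclusion $\hat\Lambda^1(\xb;u)\subseteq\Lambda^1(\xb;u)$ from Lemma~\ref{LemRelationsMult}, uniqueness plus existence immediately give the singleton equality $\hat\Lambda^1(\xb;u)=\Lambda^1(\xb;u)=\{\lambda_u\}$.

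Uniqueness: Let $\lambda\in\Lambda^1(\xb;u)$. By Lemma~\ref{LemBasicPropCone}, $\lambda\in\hat N(v;\tilde K)$ for some $v\in\tilde K:=T(\nabla F(\xb)u;T(F(\xb);\Omega))$. Since $0\in\tilde K$ and $\tilde K+L\subset\tilde K$ with $L$ a subspace, the whole line $v+\R w$ lies in $\tilde K$ for every $w\in L$; testing $v'=v+tw$ with $t\to 0^\pm$ in the $\limsup$ defining the Fr\'echet normal cone forces $\lambda^T w=0$, hence $\lambda\in L^\perp$. If $\lambda,\lambda'\in\Lambda^1(\xb;u)$, then $\nabla F(\xb)^T(\lambda-\lambda')=0$ and $\lambda-\lambda'\in L^\perp$; using $\nabla F(\xb)\R^n+L=\R^m$ to decompose an arbitrary $y\in\R^m$ as $\nabla F(\xb)w+\ell$ with $\ell\in L$ gives $y^T(\lambda-\lambda')=w^T\nabla F(\xb)^T(\lambda-\lambda')+\ell^T(\lambda-\lambda')=0$, so $\lambda=\lambda'$.

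Existence: I would work piece by piece. Fix $i\in{\cal P}(u)$; locally $F^{-1}(P_i)\subseteq{\cal F}$, so B-stationarity of $\xb$ on ${\cal F}$ descends to B-stationarity on $F^{-1}(P_i)$. Exploiting jointly the range condition $\nabla F(\xb)\R^n+L=\R^m$ and the invariance $\tilde K+L\subset\tilde K$ from LICQ($u$), a Lyusternik-type construction should realize every $v$ with $\nabla F(\xb)v\in T(F(\xb);P_i)$ as a genuine tangent vector in $T(\xb;F^{-1}(P_i))$. B-stationarity then reads $\nabla f(\xb)v\ge 0$ on this linearized cone (in particular $\nabla f(\xb)u\ge 0$, and hence $\nabla f(\xb)u=0$ because $u\in{\cal C}(\xb)$). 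Classical Farkas on the convex polyhedral cone $T(F(\xb);P_i)$ yields $\mu^i\in\hat N(F(\xb);P_i)$ with $\nabla f(\xb)^T+\nabla F(\xb)^T\mu^i=0$; the identity $(\mu^i)^T\nabla F(\xb)u=-\nabla f(\xb)u=0$ upgrades it to $\mu^i\in\hat N(\nabla F(\xb)u;T(F(\xb);P_i))$. Since each $\mu^i$ belongs to $\Lambda^1(\xb;u)$, the uniqueness step forces all $\mu^i$, $i\in{\cal P}(u)$, to equal a common $\lambda_u$, which therefore lies in $\bigcap_{i\in{\cal P}(u)}\hat N(\nabla F(\xb)u;T(F(\xb);P_i))=\hat N(\nabla F(\xb)u;T(F(\xb);\Omega))$, establishing $\lambda_u\in\hat\Lambda^1(\xb;u)$.

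The main obstacle I expect is the Lyusternik-type step in the existence argument, namely converting LICQ($u$) into a directional Abadie CQ on each polyhedral piece $P_i$, $i\in{\cal P}(u)$. Neither surjectivity of $\nabla F(\xb)$ nor polyhedrality of $\Omega$ alone is sufficient here, but the two ingredients of LICQ($u$) together --- the range condition and the $L$-invariance of $\tilde K$ --- should provide exactly the metric regularity needed to produce the required feasible path and hence transfer B-stationarity to the dual multiplier side.
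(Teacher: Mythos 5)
Your uniqueness argument is correct and is essentially the paper's: Lemma~\ref{LemBasicPropCone} places any $\lambda\in\Lambda^1(\xb;u)$ in $\hat N(z;\tilde K)$ for some $z\in\tilde K:=T(\nabla F(\xb)u;T(F(\xb);\Omega))$, the $L$-invariance gives $\lambda\in L^\perp$, and $\nabla F(\xb)\R^n+L=\R^m$ annihilates the difference of two such multipliers. The existence half, however, contains two genuine gaps, and the first sits exactly where you flagged trouble.

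The ``Lyusternik-type'' step is not available. LICQ($u$) is a purely directional condition: via Lemmas~\ref{LemBasLICQ} and \ref{LemSuffDirSubreg} it yields metric subregularity of $M$ \emph{in direction $u$} only, i.e.\ the estimate \eqref{EqDirSubReg} on the directional neighborhood $\xb+V_{\rho,\delta}(u)$. This lets you realize as tangent vectors of ${\cal F}$ only perturbed directions $u+\alpha\bar v$ with $\alpha>0$ small (so that the test points $\xb+t(u+\alpha\bar v)$ stay in $\xb+V_{\rho,\delta}(u)$), not an arbitrary $v$ with $\nabla F(\xb)v\in T(F(\xb);P_i)$. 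The invariance subspace $L$ is tied to the cone $\tilde K$, whose lineality space is in general strictly larger than that of $T(F(\xb);P_i)$ (it contains $\R\nabla F(\xb)u$, for instance), so LICQ($u$) does not give non-degeneracy, metric regularity, or an Abadie property for the single piece $F^{-1}(P_i)$; constraints that are inactive in direction $u$ are completely uncontrolled. This is why the paper works with the auxiliary linear program $\min\{\nabla f(\xb)v\mv\nabla F(\xb)v\in\tilde K\}$ and converts a feasible $\bar v$ with negative value into the tangent direction $u+\alpha\bar v\in T(\xb;{\cal F})$ using only directional subregularity, contradicting B-stationarity; M-stationarity of that polyhedral problem at $v=0$ plus \eqref{EqInclNormalCone3} then yields $\tilde\lambda\in\Lambda^1(\xb;u)$.

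Second, even granting per-piece multipliers $\mu^i\in\hat N(\nabla F(\xb)u;T(F(\xb);P_i))$, your claim that each $\mu^i$ lies in $\Lambda^1(\xb;u)$ is unjustified: a Fr\'echet normal to a single polyhedral piece need not be a (directional limiting) normal to the union, since $\hat N(\cdot;\Omega)$ is the \emph{intersection} of the normal cones of the active pieces. For instance, $(1,-1)\in\hat N\bigl((0,0);\R_-\times\{0\}\bigr)$ but $(1,-1)\notin N\bigl((0,0);Q_{\rm EC}\bigr)$. Consequently you cannot invoke the uniqueness step to force the $\mu^i$ to coincide, and the gluing over $i\in{\cal P}(u)$ collapses. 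The paper instead obtains $\hat\Lambda^1(\xb;u)\neq\emptyset$ from a second auxiliary program over the convex hull $\co\tilde K$: feasibility with optimal value $0$ is deduced from the first program by writing each generator as $\nabla F(\xb)v_i+w_i$ with $w_i\in L$ (possible by the range condition) and using $\tilde\lambda\in L^\perp$, and the resulting M-stationarity multiplier lands directly in $\hat N(0;\co\tilde K)=\hat N(\nabla F(\xb)u;T(F(\xb);\Omega))$. I recommend restructuring the existence part along these lines rather than piece by piece.
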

\begin{proof}
Consider an arbitrarily fixed direction $u\in{\cal C}(\xb)$ satisfying LICQ(u). We claim that
\begin{equation}\label{EqAuxProb1}\min\{\nabla f(\xb)v\mv \nabla F(\xb)v\in T(\nabla F(\xb)u;T(F(\xb);\Omega))\}=0.\end{equation}
Indeed, if there were $\bar v$ with $\nabla F(\xb)\bar v\in T(\nabla F(\xb)u;T(F(\xb);\Omega))$ and $\nabla f(\xb)\bar v<0$, then for every $\alpha>0$ sufficiently small we have $\nabla F(\xb)(u+\alpha\bar v)\in T(F(\xb);\Omega)$ and consequently $F(\xb)+t\nabla F(\xb)(u+\alpha \bar v)\in \Omega$ for all $t>0$ sufficiently small, since both $\Omega$ and $T(F(\xb);\Omega)$ are the union of finitely many polyhedra.
By Lemma \ref{LemBasLICQ} and Lemma \ref{LemSuffDirSubreg} we have that $M(x)=F(x)-\Omega$ is metrically subregular in direction $u$ at $(\xb,0)$ and therefore there are $\rho>0$, $\delta>0$, $\kappa>0$ such that for all $x\in\xb+V_{\rho,\delta}(u)$ the inequality \eqref{EqDirSubReg} holds. We can choose $\alpha>0$ small enough such that for all $t>0$ sufficiently small we have $\xb+t(u+\alpha\bar v)\in \xb+V_{\rho,\delta}(u)$ and $F(\xb)+t\nabla F(\xb)(u+\alpha \bar v)\in \Omega$ implying the existence of $w(t)$ with $F(\xb+t(u+\alpha\bar v+w(t)))\in\Omega$ and
\[t\norm{w(t)}\leq\kappa \dist{F(\xb+t(u+\alpha\bar v)),\Omega}\leq \kappa\norm{F(\xb+t(u+\alpha\bar v))-F(\xb)-t\nabla F(\xb)(u+\alpha\bar v)}=:\chi(t).\]
Since $F$ is Fr\'echet differentiable,  $\lim_{t\downarrow 0}\chi(t)/t=0$ and thus $\lim_{t\downarrow 0}w(t)=0$ and $u+\alpha\bar v\in T(\xb;{\cal F})$ follow. But $\nabla f(\xb)(u+\alpha\bar v)\leq \alpha\nabla f(\xb)\bar v<0$ contradicting B-stationarity of $\xb$ and hence our claim is proved. The constraint mapping of \eqref{EqAuxProb1} is a polyhedral multifunction and hence metrically subregular. Applying the M-stationarity condition at $v=0$ yields the existence of some multiplier $\tilde\lambda\in N(0;T(\nabla F(\xb)u; T(F(\xb);\Omega)))$ with
$\nabla f(\xb)^T+\nabla F(\xb)^T\tilde\lambda=0$. By \eqref{EqInclNormalCone3} we conclude $N(0;T(\nabla F(\xb)u; T(F(\xb);\Omega)))\subset N(F(\xb);\Omega;\nabla F(\xb)u)$ and $\tilde\lambda\in\Lambda^1(\xb;u)\not=\emptyset$ follows. Next we show that $\Lambda^1(\xb;u)$ is a singleton. Assume on the contrary that there are two different elements $\lambda^i\in\Lambda^1(\xb;u)$, $i=1,2$. By Lemma \ref{LemBasicPropCone} we have $\lambda^i\in\hat N(z^i;T(\nabla F(\xb)u; T(F(\xb);\Omega)))$ with $z^i\in T(\nabla F(\xb)u; T(F(\xb);\Omega))$ and because of $z^i+L\subset T(\nabla F(\xb)u;T(F(\xb);\Omega))$ we conclude that $\lambda^i$ belongs to $L^\perp$, $i=1,2$. Further $\nabla F(\xb)^T\lambda^i=-\nabla f(\xb)^T$,  $i=1,2$ and we obtain the contradiction
$0\not=\lambda^1-\lambda^2\in \ker \nabla F(\xb)^T\cap L^\perp=\Range \nabla F(\xb)^\perp\cap L^\perp=(\Range \nabla F(\xb)+L)^\perp={\R^m}^\perp=\{0\}.$ Since $\hat\Lambda^1(\xb;u)\subset \Lambda^1(\xb;u)$, it suffices now to show $\hat\Lambda^1(\xb;u)\not=\emptyset$ in order to prove $\Lambda^1(\xb;u)=\hat\Lambda^1(\xb;u)=\{\tilde \lambda\}$. We claim that
\begin{equation}\label{EqAuxProb2}\min\{\nabla f(\xb)v\mv \nabla F(\xb)v\in \co T(\nabla F(\xb)u;T(F(\xb);\Omega))\}=0.\end{equation}
Assume on the contrary that there is some $\bar v$ with $\nabla F(\xb)\bar v\in \co T(\nabla F(\xb)u;T(F(\xb);\Omega))$ and  $\nabla f(\xb)\bar v<0$. Then $\nabla F(\xb)\bar v$ can be represented as a convex combination $\sum_{i=1}^k\mu_iz_i$ of elements $z_1,\ldots, z_k\in T(\nabla F(\xb)u;T(F(\xb);\Omega))$. Each element $z_i$ can be written in the form $\nabla F(\xb)v_i+w_i$ with $w_i\in L$ and we obtain $\nabla F(\xb)v_i=z_i-w_i\in T(\nabla F(\xb)u;T(F(\xb);\Omega))$ and consequently $\nabla f(\xb)v_i\geq 0$ because of \eqref{EqAuxProb1}. Then, using $\tilde\lambda\in L^\perp$ we obtain the contradiction
\[0>\nabla f(\xb)\bar v=-\tilde\lambda^T\nabla F(\xb)\bar v=-\sum_{i=1}^k\mu_i\tilde\lambda^T (\nabla F(\xb) v_i+w_i)=\sum_{i=1}^k\mu_i\nabla f(\xb)v_i\geq 0.\]
Therefore \eqref{EqAuxProb2} holds true and since $\co T(\nabla F(\xb)u;T(F(\xb);\Omega))$ is a polyhedral cone as the convex hull of the union of finitely many polyhedral cones, we obtain that the constraint mapping $v\rightrightarrows\nabla F(\xb)v-\co T(\nabla F(\xb)u;T(F(\xb);\Omega))$ is metrically subregular at $(0,0)$ Applying now the M-stationarity condition at $v=0$ yields the existence of some multiplier
\begin{eqnarray*}\hat\lambda&\in& N(0;\co T(\nabla F(\xb)u; T(F(\xb);\Omega)))=\hat N(0;\co T(\nabla F(\xb)u; T(F(\xb);\Omega)))\\
&=&\hat N(0;T(\nabla F(\xb)u; T(F(\xb);\Omega)))=\hat N(\nabla F(\xb)u; T(F(\xb);\Omega))
\end{eqnarray*} with
$\nabla f(\xb)^T+\nabla F(\xb)^T\hat\lambda=0$ and therefore $\hat\lambda\in\hat\Lambda^1(\xb;u)\not=\emptyset$ and this completes the proof.
\end{proof}
\begin{corollary}
Assume that $\xb$ is B-stationary for the problem \eqref{EqMPDC}.
Then for every critical direction $u\in{\cal C}(\xb)$ fulfilling
LICQ(u) we have
\[\Lambda^{\lambda_0}(\xb;u)=\hat \Lambda^{\lambda_0}(\xb;u)\ \forall \lambda_0\geq 0.\]
\end{corollary}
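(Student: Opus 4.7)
The plan is to reduce the corollary to the case $\lambda_0 = 1$ already settled by Proposition \ref{PropLICQ}, together with the case $\lambda_0 = 0$ settled by Lemma \ref{LemBasLICQ}. The key structural observation is that both of the sets $N(F(\xb);\Omega;\nabla F(\xb)u)$ and $\hat N(\nabla F(\xb)u;T(F(\xb);\Omega))$ are cones, so multiplication by a positive scalar preserves membership.

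First I would dispose of the degenerate case $\lambda_0 = 0$. Under LICQ(u), Lemma \ref{LemBasLICQ} gives $\Lambda^0(\xb;u)=\emptyset$. Since Lemma \ref{LemRelationsMult} provides the inclusion $\hat\Lambda^0(\xb;u)\subset \Lambda^0(\xb;u)$, it follows immediately that $\hat\Lambda^0(\xb;u)=\emptyset$ as well, so $\Lambda^0(\xb;u)=\hat\Lambda^0(\xb;u)$.

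Next, for $\lambda_0>0$ I would argue by scaling. Given $\lambda\in\Lambda^{\lambda_0}(\xb;u)$, set $\tilde\lambda:=\lambda/\lambda_0$. Because $N(F(\xb);\Omega;\nabla F(\xb)u)$ is a cone, $\tilde\lambda$ still lies in it; dividing the stationarity equation $\lambda_0\nabla f(\xb)^T+\nabla F(\xb)^T\lambda = 0$ by $\lambda_0$ yields $\nabla f(\xb)^T+\nabla F(\xb)^T\tilde\lambda=0$, and clearly the nontriviality condition $1+\|\tilde\lambda_1\|\neq 0$ holds, so $\tilde\lambda\in\Lambda^1(\xb;u)$. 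By Proposition \ref{PropLICQ}, $\Lambda^1(\xb;u)=\hat\Lambda^1(\xb;u)$, so $\tilde\lambda\in\hat\Lambda^1(\xb;u)$. Since $\hat N(\nabla F(\xb)u;T(F(\xb);\Omega))$ is also a cone, multiplying back by $\lambda_0$ gives $\lambda=\lambda_0\tilde\lambda\in\hat\Lambda^{\lambda_0}(\xb;u)$. This proves $\Lambda^{\lambda_0}(\xb;u)\subset\hat\Lambda^{\lambda_0}(\xb;u)$; the reverse inclusion is provided directly by Lemma \ref{LemRelationsMult}.

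I do not expect any real obstacle: once the cone property of the two normal-cone objects is noted, the corollary is essentially a homogeneity statement that reduces both endpoint cases to results already established. The only point requiring minor care is the nontriviality condition $\lambda_0+\|\lambda_1\|\neq 0$ entering the definition of the multiplier sets, but this is trivially preserved under positive rescaling whenever $\lambda_0>0$.
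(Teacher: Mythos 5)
Your proof is correct and follows essentially the same route as the paper: the case $\lambda_0=0$ is handled via Lemmas \ref{LemRelationsMult} and \ref{LemBasLICQ}, and the case $\lambda_0>0$ via the homogeneity relations $\Lambda^{\lambda_0}(\xb;u)=\lambda_0\Lambda^1(\xb;u)$ and $\hat\Lambda^{\lambda_0}(\xb;u)=\lambda_0\hat\Lambda^1(\xb;u)$ combined with Proposition \ref{PropLICQ}. You merely spell out the scaling argument that the paper states in one line.
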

\begin{proof}In case $\lambda_0=0$ we have $\Lambda^{0}(\xb;u)=\hat
\Lambda^{0}(\xb;u)=\emptyset$ because of Lemmas
\ref{LemRelationsMult}, \ref{LemBasLICQ}. If $\lambda_0>0$, the
assertion follows from the relations
$\hat\Lambda^{\lambda_0}(\xb;u)=\lambda_0\hat\Lambda^1(\xb;u)$,
$\Lambda^{\lambda_0}(\xb;u)=\lambda_0\Lambda^1(\xb;u)$ and
Proposition \ref{PropLICQ}.
\end{proof}

We now state a second-order sufficient condition in terms of multipliers belonging to $\Lambda^1(\xb;u)$.
\begin{theorem}\label{ThMSOSC}
Assume that $\xb$ is an extended M-stationary solution for \eqref{EqMPDC}, $f$ and $F$ are twice Fr\'echet differentiable at $\xb$ and that for every nonzero critical direction $0\not=u\in{\cal C}(\xb)$ one has
\begin{equation}\label{EqMSOSC}
u^T\nabla_x^2{\cal L}(\xb,1,\lambda)u>0\ \forall \lambda\in\Lambda^1(\xb;u).
\end{equation}
Then $\xb$ is an essential local minimizer of second order.
\end{theorem}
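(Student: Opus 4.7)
The plan is to argue by contradiction using Lemma \ref{LemCharEssLocMin}. Suppose, for the sake of contradiction, that $\xb$ is not an essential local minimizer of second order. The equivalence (a)$\Leftrightarrow$(c) of that lemma produces a nonzero critical direction $0\neq u\in{\cal C}(\xb)$ together with some $v\in\R^n$ satisfying the inclusions \eqref{EqEssMinCond1} and \eqref{EqEssMinCond2}. Since $\Lambda^1(\xb;u)\neq\emptyset$ by extended M-stationarity, relation \eqref{EqLambda0Nabla_f} forces $\nabla f(\xb)u=0$, so $T(\nabla f(\xb)u;\R_-)=\R_-$ and \eqref{EqEssMinCond1} reduces to $\nabla f(\xb)v+\tfrac12 u^T\nabla^2 f(\xb)u\leq 0$. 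From \eqref{EqEssMinCond2} together with the union decomposition of the second-order tangent set I select $i_0\in{\cal P}(u)$ such that $z:=\nabla F(\xb)v+\tfrac12 u^T\nabla^2 F(\xb)u$ lies in $K_{i_0}:=T(\nabla F(\xb)u;T(F(\xb);P_{i_0}))$.

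The decisive step is to exhibit some $\lambda\in\Lambda^1(\xb;u)$ that also lies in $K_{i_0}^\circ=\hat N(\nabla F(\xb)u;T(F(\xb);P_{i_0}))$. By Lemma \ref{LemBasicPropCone} every element of $\Lambda^1(\xb;u)$ is of the form $\lambda\in\hat N(w_\lambda;T(\nabla F(\xb)u;T(F(\xb);\Omega)))$ for some $w_\lambda$ in the second-order tangent cone, and as soon as $w_\lambda\in K_{i_0}$ the intersection formula for regular normals to a union of convex polyhedra forces $\lambda\in K_{i_0}^\circ$. To produce such a $\lambda$ I would exploit the fact that $\nabla F(\xb)u\in T(F(\xb);P_{i_0})$, which by polyhedrality guarantees $F(\xb)+t\nabla F(\xb)u\in P_{i_0}$ for all sufficiently small $t>0$, and mimic the forward implication in the proof of Lemma \ref{LemBasicPropCone}: build a sequence $F(\xb)+t_k\mu_k\in P_{i_0}$ with $t_k\downarrow 0$ and $\mu_k\to\nabla F(\xb)u$, then extract a convergent sequence of regular normals $\lambda_k\in\hat N(F(\xb)+t_k\mu_k;\Omega)$ satisfying the KKT relation $\nabla f(\xb)^T+\nabla F(\xb)^T\lambda_k=0$, so that any limit $\lambda$ belongs to $\Lambda^1(\xb;u)$ and has its base point in $K_{i_0}$.

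Once such $\lambda$ is in hand the contradiction is a short calculation. Because $\lambda\in K_{i_0}^\circ$ and $z\in K_{i_0}$ we have $\lambda^T z\leq 0$; expanding $z$ and using $\lambda^T\nabla F(\xb)=-\nabla f(\xb)$, which follows from $\nabla_x{\cal L}(\xb,1,\lambda)=0$, yields $-\nabla f(\xb)v+\tfrac12 u^T(\lambda^T\nabla^2 F(\xb))u\leq 0$, hence $\nabla f(\xb)v\geq\tfrac12 u^T(\lambda^T\nabla^2 F(\xb))u$. Combined with $\nabla f(\xb)v\leq-\tfrac12 u^T\nabla^2 f(\xb)u$ this gives $u^T\nabla^2_x{\cal L}(\xb,1,\lambda)u\leq 0$, which contradicts the hypothesis $u^T\nabla^2_x{\cal L}(\xb,1,\lambda)u>0$ valid for every $\lambda\in\Lambda^1(\xb;u)$.

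The main obstacle is exactly the selection in the middle paragraph: among the multipliers supplied by extended M-stationarity I must be able to pick one whose associated base point $w_\lambda$ lies in the specific branch $K_{i_0}$ dictated by the hypothetical pair $(u,v)$. Everything else is a routine application of Lemma \ref{LemCharEssLocMin} and elementary manipulation of the generalized Lagrangian.
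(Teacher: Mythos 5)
Your opening and closing computations match the paper's: the reduction via Lemma \ref{LemCharEssLocMin}(a)$\Leftrightarrow$(c), the observation that $\nabla f(\xb)u=0$ follows from \eqref{EqLambda0Nabla_f}, and the final estimate $u^T\nabla_x^2{\cal L}(\xb,1,\lambda)u\le 0$ once a multiplier $\lambda\in\Lambda^1(\xb;u)$ with $\lambda^T z\le 0$ is available. But the middle step --- producing a $\lambda\in\Lambda^1(\xb;u)$ that is polar to the specific branch $K_{i_0}=T(\nabla F(\xb)u;T(F(\xb);P_{i_0}))$ containing $z$ --- is exactly where your argument has a genuine gap, and you say so yourself. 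Extended M-stationarity only gives nonemptiness of $\Lambda^1(\xb;u)$; it gives you no control over which branch of the disjunctive structure the multiplier's base point $w_\lambda$ from Lemma \ref{LemBasicPropCone} sits in. Your proposed fix (build base points $F(\xb)+t_k\mu_k\in P_{i_0}$ and ``extract regular normals $\lambda_k$ satisfying the KKT relation'') does not work as stated: the forward construction in Lemma \ref{LemBasicPropCone} manufactures elements of $\hat N(F(\xb)+t_k\mu_k;\Omega)$, but nothing forces any of them to satisfy $\nabla f(\xb)^T+\nabla F(\xb)^T\lambda_k=0$. The stationarity equation and the branch selection have to be obtained simultaneously, and you have no mechanism that couples them.

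The paper closes this gap by a detour you do not take: it introduces the auxiliary polyhedral linear program $\min_v \nabla f(\xb)v$ subject to $\nabla F(\xb)v+\frac 12 u^T\nabla^2 F(\xb)u\in T(\nabla F(\xb)u;T(F(\xb);\Omega))$, whose feasibility is \eqref{EqEssMinCond2}. It first shows this problem is bounded below --- this step itself uses extended M-stationarity in an essential way, since an unbounded feasible sequence would produce a critical direction $u+\alpha\bar v$ with $\nabla f(\xb)(u+\alpha\bar v)<0$, contradicting \eqref{EqLambda0Nabla_f}. It then takes an optimal solution $\tilde v$ and applies the M-stationarity conditions of Theorem \ref{ThBaseOptCond} to this polyhedral (hence metrically subregular) problem at $\tilde v$. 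The resulting multiplier automatically satisfies the stationarity equation, lies in $N(z_{\tilde v};T(\nabla F(\xb)u;T(F(\xb);\Omega)))$ with $\lambda^T z_{\tilde v}=0$ by conicity, and belongs to $\Lambda^1(\xb;u)$ via \eqref{EqInclNormalCone3}; optimality of $\tilde v$ transfers the bound $\nabla f(\xb)\tilde v\le\nabla f(\xb)v\le-\frac12 u^T\nabla^2 f(\xb)u$ from your $v$ to $\tilde v$. Some such auxiliary optimization (or an equivalent duality argument) appears unavoidable here, so as written your proposal is incomplete at its acknowledged decisive step.
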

\begin{proof}
By contraposition. Assuming on the contrary that $\xb$ is not an essential local minimizer, by Lemma \ref{LemCharEssLocMin} we can find $0\not=u\in{\cal C}(\xb)$ and $v\in\R^n$ fulfilling \eqref{EqEssMinCond1}, \eqref{EqEssMinCond2}. We now claim that the problem
\begin{equation}\label{EqAuxProb3}\min_v\nabla f(\xb)v\ \mbox{subject to}\ \nabla F(\xb)v+\frac 12 u^T\nabla^2 F(\xb)u\in T(\nabla F(\xb)u; T(F(\xb);\Omega))
\end{equation}
has an optimal solution. If there would not exist an optimal solution, because the feasible region is not empty because of \eqref{EqEssMinCond2}, we could find a sequence $(v^k)$ feasible for \eqref{EqAuxProb3} such that $\nabla f(\xb)v^k\to -\infty$. Consider the sequence $\tilde v^k:=v^k/\vert\nabla f(\xb)v^k\vert$. Then
$\dist{\nabla F(\xb)\tilde v^k,T(\nabla F(\xb)u; T(F(\xb);\Omega))}\to 0$ and since $v\rightrightarrows\nabla F(\xb)v-T(\nabla F(\xb)u; T(F(\xb);\Omega))$ is a polyhedral multifunction and therefore metrically subregular at $(0,0)$, there is a sequence $(\hat v^k)$ with $\nabla F(\xb)\hat v^k\in T(\nabla F(\xb)u;T(F(\xb);\Omega))$ and $\lim_{k\to\infty} (\tilde v^k-\hat v^k)=0$. Fixing $\bar v:=\hat v^k$ for $k$ sufficiently large we have $\nabla f(\xb)\bar v<-\frac 12$. Since $\nabla F(\xb)(u+\alpha\bar v)\in T(F(\xb);\Omega)$ for $\alpha>0$ sufficiently small we have $u+\alpha\bar v\in T_{\rm lin}(\xb)$. Together with $\nabla f(\xb)(u+\alpha\bar v)<-\frac{\alpha}2<0$ we have $u+\alpha\bar v\in{\cal C}(\xb)$ and thus $\Lambda^1(\xb;u+\alpha\bar v)\not=\emptyset$ by extended M-stationarity of $\xb$. But from \eqref{EqLambda0Nabla_f} we obtain the contradiction $\nabla f(\xb)(u+\alpha\bar v)=0$. Hence the problem \eqref{EqAuxProb3} has an optimal solution $\tilde v$. Since the constraint mapping is a polyhedral multifunction and therefore metrically subregular at $(\tilde v,0)$, we can apply the M-stationarity conditions at $\tilde v$ to find a multiplier
\[\lambda\in N(\nabla F(\xb)\tilde v+\frac 12 u^T\nabla^2 F(\xb)u; T(\nabla F(\xb)u;T(F(\xb);\Omega)))\]
with $\nabla f(\xb)^T+\nabla F(\xb)^T\lambda=\nabla_x{\cal L}(\xb;1,\lambda)^T=0$, showing, together with $\lambda\in N(F(\xb);\Omega;\nabla F(\xb)u)$ because of \eqref{EqInclNormalCone3},  $\lambda\in\Lambda^1(\xb;u)$. Using extended M-stationarity of $\xb$ and \eqref{EqLambda0Nabla_f} we obtain $\nabla f(\xb)u=0$ and therefore $\nabla f(\xb)\tilde v+\frac 12 u^T\nabla^2 f(\xb)u\leq 0$ because of \eqref{EqEssMinCond1}. Because $T(\nabla F(\xb)u;T(F(\xb);\Omega))$ is a cone we  have $\lambda^T(\nabla F(\xb)\tilde v+\frac 12 u^T\nabla^2 F(\xb)u)=0$ and thus
\begin{eqnarray*}
0&\geq&\nabla f(\xb)\tilde v+\frac 12 u^T\nabla f(\xb)u +\lambda^T(\nabla F(\xb)\tilde v+\frac 12 u^T\nabla^2 F(\xb)u)\\
&=&\nabla_x{\cal L}(\xb,1,\lambda)\tilde v+\frac 12u^T\nabla_x^2{\cal L}(\xb,1,\lambda)u=\frac 12u^T\nabla_x^2{\cal L}(\xb,1,\lambda)u
\end{eqnarray*}
contradicting \eqref{EqMSOSC}.
\end{proof}
\begin{remark}
Following \cite[Definition 3.2]{GuoLinYe12a} the point $\xb$ is said to fulfill the {\em strong second-order sufficient condition} (SSOSC) for \eqref{EqMPDC} if $\Lambda^1(\xb)\not=\emptyset$ and  for every nonzero critical direction $0\not=u\in{\cal C}(\xb)$ one has
\[u^T\nabla_x^2{\cal L}(\xb,1,\lambda)u>0\ \forall \lambda\in\Lambda^1(\xb).\]
However note that this condition is not sufficient for $\xb$ to be a local minimizer as can be easily seen from the example
\[\min-x_1+x_1^2+x_2^2\ \mbox{subject to}\ (-x_1,-x_2)\in Q_{\rm EC}.\]
In order to make (SSOSC) sufficient for $\xb$ being a local minimizer, in view of Theorem \ref{ThMSOSC} we have to replace the M-stationarity condition $\Lambda^1(\xb)\not=\emptyset$ by the extended M-stationarity condition $\Lambda^1(\xb;u)\not=\emptyset$ $\forall 0\not=u\in{\cal C}(\xb)$.
\end{remark}

\section{Applications to MPECs\label{SecMPEC}}
We now want to apply the results of the preceding section to the
MPEC \eqref{EqMPEC}, or more exactly, to the problem
\eqref{EqMPDC} with $F$ and $\Omega$ given by \eqref{EQMPECData}.
By straightforward calculation we can obtain the formulas for the
Fr\'echet normal cone, the Mordukhovich normal cone and the
contingent cone of the set $Q_{\rm EC}$ defined in \eqref{EqQEC}
as follows:
\begin{lemma} For all $a=(a_1,a_2)\in Q_{\rm EC}$ we have
\[\hat N(a;\Omega_{\rm EC})=\left\{(\xi_1,\xi_2)\mv\begin{array}{ll} \xi_2=0&\mbox{if $0=a_1>a_2$}\\
\xi_1\geq0,\xi_2\geq 0&\mbox{if $a_1=a_2=0$}\\
\xi_1=0&\mbox{if $a_1<a_2=0$}
\end{array}\right\},\]
\[N(a;\Omega_{\rm EC})=\begin{cases}\hat N(a;\Omega_{\rm EC})&\mbox{if $a\not=(0,0)$}\\
\{(\xi_1,\xi_2)\mv\mbox{either $\xi_1>0$, $\xi_2>0$ or $\xi_1\xi_2=0$}\}&\mbox{if $a=(0,0)$},
\end{cases}\]
\[T(a;\Omega_{\rm EC})=\left\{(u_1,u_2)\mv\begin{array}{ll}u_1=0&\mbox{if $0=a_1>a_2$}\\
u_1\leq 0,u_2\leq0, u_1u_2=0&\mbox{if $a_1=a_2=0$}\\
u_2=0&\mbox{if $a_1<a_2=0$}
\end{array}\right\}\]
and for all $u=(u_1,u_2)\in T(a;\Omega_{\rm EC})$ we have
\[T(u;T(a;\Omega_{\rm EC}))=\begin{cases}T(a;\Omega_{\rm EC})&\mbox{if $a\not=(0,0)$}\\
T(u;\Omega_{\rm EC}))&\mbox{if $a=(0,0)$},
\end{cases}\]
\[\hat N(u;T(a;\Omega_{\rm EC}))=\begin{cases}\hat N(a;\Omega_{\rm EC})&\mbox{if $a\not=(0,0)$}\\
\hat N(u;\Omega_{\rm EC})&\mbox{if $a=(0,0)$},
\end{cases}\]
\[N(a;\Omega_{\rm EC};u)=\begin{cases}N(a;\Omega_{\rm EC})&\mbox{if $a\not=(0,0)$}\\
 N(u;\Omega_{\rm EC})&\mbox{if $a=(0,0)$}.
\end{cases}\]
\end{lemma}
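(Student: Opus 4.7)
The plan is to exploit the decomposition $\Omega_{\rm EC}=P_1\cup P_2$ with $P_1:=\R_-\times\{0\}$ and $P_2:=\{0\}\times\R_-$, combine this with the Preliminaries formulas for the contingent cone and Fr\'echet normal cone of a union of sets and of a convex polyhedron, and perform a case split according to whether $a=(0,0)$, $a\in P_1\setminus\{0\}$, or $a\in P_2\setminus\{0\}$. In the latter two situations $a$ belongs to exactly one $P_i$, so $\Omega_{\rm EC}$ coincides locally with a one-dimensional linear subspace $L$ (either $\R\times\{0\}$ or $\{0\}\times\R$); hence $\hat N(a;\Omega_{\rm EC})=N(a;\Omega_{\rm EC})=L^\perp$ and $T(a;\Omega_{\rm EC})=L$, and all ``second-order'' objects $T(u;T(a;\Omega_{\rm EC}))$, $\hat N(u;T(a;\Omega_{\rm EC}))$ and $N(a;\Omega_{\rm EC};u)$ collapse to $L$ or $L^\perp$ via the convex-case equality in Lemma~\ref{LemInclNormalCone}.

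The main work is the case $a=(0,0)$. First I would use the intersection formula $\hat N(a;\Omega)=\bigcap_{i:a\in P_i}\hat N(a;P_i)$ with $\hat N((0,0);P_1)=\R_+\times\R$ and $\hat N((0,0);P_2)=\R\times\R_+$ to deduce $\hat N((0,0);\Omega_{\rm EC})=\R_+\times\R_+$, and the union formula for tangent cones to obtain $T((0,0);\Omega_{\rm EC})=(\R_-\times\{0\})\cup(\{0\}\times\R_-)$, which coincides with $\Omega_{\rm EC}$ itself. For the limiting normal cone at the origin I would compute $N((0,0);\Omega_{\rm EC})$ as the outer limit $\limsup_{x\to(0,0),\,x\in\Omega_{\rm EC}}\hat N(x;\Omega_{\rm EC})$, splitting the approach into three admissible families: sequences inside $P_1\setminus\{0\}$ contribute $\{0\}\times\R$, sequences inside $P_2\setminus\{0\}$ contribute $\R\times\{0\}$, and stationary sequences at the origin contribute $\R_+^2$; the union of these three sets is precisely the set ``$\xi_1\xi_2=0$ or $\xi_1>0,\xi_2>0$''.

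For the ``second-order'' objects at $a=(0,0)$ the key observation is that $\Omega_{\rm EC}$ is a cone, so $T((0,0);\Omega_{\rm EC})=\Omega_{\rm EC}$; substituting this in immediately yields $T(u;T((0,0);\Omega_{\rm EC}))=T(u;\Omega_{\rm EC})$ and $\hat N(u;T((0,0);\Omega_{\rm EC}))=\hat N(u;\Omega_{\rm EC})$. The identity $N((0,0);\Omega_{\rm EC};u)=N(u;\Omega_{\rm EC})$ then follows from the scale invariance of Fr\'echet normals to a cone: for every $t>0$ and $x\in\Omega_{\rm EC}$, the substitution $x'=ty'$ in the defining limit of $\hat N(tx;\Omega_{\rm EC})$ gives $\hat N(tx;\Omega_{\rm EC})=\hat N(x;\Omega_{\rm EC})$. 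Plugging $\hat N(t_ku_k;\Omega_{\rm EC})=\hat N(u_k;\Omega_{\rm EC})$ into the definition of $N((0,0);\Omega_{\rm EC};u)$, the parameter $t_k$ becomes irrelevant and the set reduces to the standard Mordukhovich normal cone $N(u;\Omega_{\rm EC})$.

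The hard part is nothing conceptual but rather the bookkeeping: matching each of the six explicit formulas against each of the three positions of $a$ and, for the Mordukhovich normal cone at the origin, ensuring no contributing family of approach sequences is overlooked. Once the case analysis is in place, every individual identity is a one-line consequence of either the polyhedral formula for $\hat N$ and $T$, the union formulas, the convex-set equality in Lemma~\ref{LemInclNormalCone}, or the positive-scaling invariance of Fr\'echet normals to the cone $\Omega_{\rm EC}$.
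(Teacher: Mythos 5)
Your proposal is correct, and it matches the paper's intent exactly: the paper offers no written proof for this lemma, asserting only that the formulas follow ``by straightforward calculation,'' and your decomposition $Q_{\rm EC}=(\R_-\times\{0\})\cup(\{0\}\times\R_-)$ together with the union/intersection formulas from the Preliminaries, the outer-limit computation of $N((0,0);Q_{\rm EC})$, and the positive-scaling invariance of Fr\'echet normals to a cone is precisely that calculation carried out. The only point worth making explicit is that the convex-case equality of Lemma~\ref{LemInclNormalCone} is applied at $a\not=(0,0)$ after localizing $Q_{\rm EC}$ to the subspace it coincides with near $a$, which is legitimate since all the cones involved are local constructions.
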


In what follows, we denote by $\xb$ a point feasible for the MPEC \eqref{EqMPEC}. Further we assume throughout this section that the mappings $f$, $g$, $h$, $G$, $H$ are continuously Fr\'echet differentiable, twice Fr\'echet differentiable at $\xb$ and that there are numbers $1\leq l_1\leq l$, $1\leq p_1\leq p$, $1\leq q_1\leq q$ such that the components
\[ g_i(x), i=l_1+1,\ldots,l,\quad h_i(x), i=p_1+1,\ldots,p,\quad G_i(x),H_i(x), i=q_1+1\ldots,q\]
are affine or linear. In what follows, for every direction $u\in T_{\rm lin}(\xb)$ the multifunction $M_2$, which is assumed to be metrically subregular in direction $u$, is build by the linear parts of the constraints.

Denoting
\begin{eqnarray*}
&&\bar I_g:=\{i\in\{1,\ldots,l\}\mv g_i(\xb)=0\},\\
&&\bar I^{+0}:=\{i\in\{1,\ldots,q\}\mv G_i(\xb)>0=H_i(\xb)\},\\
&&\bar I^{0+}:=\{i\in\{1,\ldots,q\}\mv G_i(\xb)=0<H_i(\xb)\},\\
&&\bar I^{00}:=\{i\in\{1,\ldots,q\}\mv G_i(\xb)=0=H_i(\xb)\},
\end{eqnarray*}
the cone $T_{\rm lin}(\xb)$ is given by
\[T_{\rm lin}(\xb)=\left\{u\in\R^n\mv \begin{array}{l}\nabla g_i(\xb)u\leq 0,\ i\in\bar I_g,\\
\nabla h_i(\xb)u=0,\ i=1,\ldots,p,\\
\nabla G_i(\xb)u=0,\ i\in \bar I^{0+},\\
\nabla H_i(\xb)u=0,\ i\in \bar I^{+0},\\
-(\nabla G_i(\xb)u,\nabla H_i(\xb)u)\in Q_{\rm EC},\ i\in \bar I^{00}
\end{array}\right\}.\]

The generalized Lagrangian reads as
\[{\cal L}(x,\lambda_0,\lambda)=\lambda_0f(x)+{\lambda^g}^Tg(x)+{\lambda^h}^Th(x)-{\lambda^G}^TG(x)-{\lambda^H}^TH(x),\]
where $\lambda_0\in\R$, $\lambda:=(\lambda^g,\lambda^h,\lambda^G,\lambda^H)\in\R^l\times\R^p\times\R^q\times\R^q$.

Given $u\in T_{\rm lin}(\xb)$ we define
\begin{eqnarray*}&&I_g(u):=\{i\in\bar I_g\mv\nabla g_i(\xb)u=0\}\\
&&I^{+0}(u):=\{i\in \bar I^{00}\mv \nabla G_i(\xb)u>0=\nabla H_i(\xb)u\},\\
&&I^{0+}(u):=\{i\in \bar I^{00}\mv \nabla G_i(\xb)u=0<\nabla H_i(\xb)u\},\\
&&I^{00}(u):=\{i\in \bar I^{00}\mv \nabla G_i(\xb)u=0=\nabla H_i(\xb)u\}.
\end{eqnarray*}
Then for $\lambda_0\geq 0$ we have
\[\Lambda^{\lambda_0}(\xb;u)=\left\{\lambda=(\lambda^g,\lambda^h,\lambda^G,\lambda^H)\mv\begin{array}{l}\nabla_x{\cal L}(\xb,\lambda_0,\lambda)=0\\
\lambda^g_i\geq 0,\lambda^g_i g_i(\xb)=0,\ i\in\{1,\ldots, l\}\\
\lambda^g_i=0,\ i\in\bar I_g\setminus I_g(u)\\
\lambda^H_i=0,\ i\in \bar I^{0+}\cup I^{0+}(u)\\
\lambda^G_i=0,\ i\in \bar I^{+0}\cup I^{+0}(u)\\
\mbox{either $\lambda^G_i>0,\lambda^H_i>0$ or $\lambda^G_i\lambda^H_i=0$},\ i\in I^{00}(u)\\
\lambda_0+\sum_{i=l}^{l_1}\lambda^g_i+\sum_{i=1}^{p_1}\vert \lambda^h_i\vert+\sum_{i=1}^{q_1}(\vert \lambda^G_i\vert+\vert \lambda^H_i\vert)>0
\end{array}\right\},\]
\[\hat\Lambda^{\lambda_0}(\xb;u)=\left\{\lambda=(\lambda^g,\lambda^h,\lambda^G,\lambda^H)\mv\begin{array}{l}\nabla_x{\cal L}(\xb,\lambda_0,\lambda)=0\\
\lambda^g_i\geq 0,\lambda^g_i g_i(\xb)=0,\ i\in\{1,\ldots, l\}\\
\lambda^g_i=0,\ i\in\bar I_g\setminus I_g(u)\\
\lambda^H_i=0,\ i\in \bar I^{0+}\cup I^{0+}(u)\\
\lambda^G_i=0,\ i\in \bar I^{+0}\cup I^{+0}(u)\\
\lambda^G_i\geq0,\lambda^H_i\geq0,\ i\in I^{00}(u)\\
\lambda_0+\sum_{i=l}^{l_1}\lambda^g_i+\sum_{i=1}^{p_1}\vert \lambda^h_i\vert+\sum_{i=1}^{q_1}(\vert \lambda^G_i\vert+\vert \lambda^H_i\vert)>0
\end{array}\right\}.\]
Since
\[T(\nabla F(\xb)u;T(F(\xb);\Omega))=\left\{
(z^g,z^h,z_1^G,z_1^H,\ldots,z_q^G,z_q^H)^T\in\R^l\times \R^p\times
\R^{2q}\mv \begin{array}{l}z_i^g\leq 0,\ i\in
I^g(u),\\
z^h=0,\\
z_i^G=0,\ i\in \bar I^{0+}\cup I^{0+}(u),\\
z_i^H=0,\ i\in \bar I^{+0}\cup I^{+0}(u),\\
(z_i^G,z_i^H)\in Q_{EC},\ i\in I^{00}(u)
\end{array}\right\},\]
the largest possible subspace $L$ such that $T(\nabla
F(\xb)u;T(F(\xb);\Omega))+L\subset T(\nabla
F(\xb)u;T(F(\xb);\Omega))$ is given by
\[L=\left\{
(z^g,z^h,z_1^G,z_1^H,\ldots,z_q^G,z_q^H)^T\in\R^l\times \R^p\times
\R^{2q}\mv \begin{array}{l}z_i^g= 0,\ i\in
I^g(u),\\
z^h=0,\\
z_i^G=0,\ i\in \bar I^{0+}\cup I^{0+}(u),\\
z_i^H=0,\ i\in \bar I^{+0}\cup I^{+0}(u),\\
z_i^G=z_i^H=0,\ i\in I^{00}(u)
\end{array}\right\}.\]
Hence LICQ(u) is fulfilled if and only if the family of gradients
\begin{eqnarray*}&&\{\nabla g_i(\xb)\mv i\in I_g(u)\}\cup\{\nabla h_i(\xb)\mv i\in\{1,\ldots,p\}\}
\cup\{\nabla G_i(\xb)\mv i\in \bar I^{0+}\cup I^{0+}(u)\cup I^{00}(u)\}\\
&&\cup
\{\nabla H_i(\xb)\mv i\in \bar I^{+0}\cup I^{+0}(u)\cup I^{00}(u)\}
\end{eqnarray*}
is linearly independent. It is easy to see that LICQ(0) is exactly the well-known MPEC LICQ condition.

\begin{example}\label{ExMStatNotLocMin}
Consider the problem
\begin{eqnarray*}\min_{x=(x_1,x_2,x_3)} f(x)&:=&x_1+x_2-2x_3\\
g_1(x)&:=& -x_1-x_3\leq 0,\\
g_2(x)&:=& -x_2+x_3\leq 0,\\
-(G_1(x),H_1(x))&:=&-(x_1,x_2)\in Q_{\rm EC}.
\end{eqnarray*}
Then $\xb=(0,0,0)$ is not a local minimizer, because for every $\alpha>0$ the point $x^\alpha:=(0,\alpha,\alpha)$ is feasible and $f(x^\alpha)=-\alpha<0$. Indeed, for the critical direction $u=(0,1,1)$ we have $\Lambda^1(\xb;u)=\emptyset$ and therefore $\xb$ is not an extended M-stationary solution and consequently not a local minimizer, since all problem functions are linear and the constraint mapping is thus metrically subregular. However, $\xb$ is M-stationary since $\Lambda^1(\xb)=\{(1,3,0,-2)\}$.
\end{example}

To demonstrate the results on second-order optimality conditions of the preceding section we consider the following example:
\begin{example}
Consider the parameter dependent problem
\begin{eqnarray*}P(a)\qquad \min_{x_1,x_2} f(x_1,x_2)&:=&-x_1+\frac 12 x_2^2\\
 g_1(x_1,x_2)&:=& ax_1^2-x_2\leq 0,\\
-(G_1(x_1,x_2),H_1(x_1,x_2))&:=&-(x_1,x_2)\in Q_{\rm EC}
\end{eqnarray*}
where $a\in\R$. Then it is easy to see that $\xb=(0,0)$  is a local minimizer, if and only if
$a>0$. Let us verify this by using our theory.

We have
\[{\cal L}(x,\lambda_0,(\lambda^g,\lambda^G,\lambda^H))=\lambda_0(-x_1+\frac 12 x_2^2)+\lambda^g(ax_1^2-x_2)-\lambda^Gx_1-\lambda^Hx_2\]
and for every $a$ it follows that $\Lambda^0(\xb)=\{(\alpha,0,-\alpha\mv\alpha>0\}$, implying that metric regularity of the constraint mapping and therefore also LICQ(0) are violated.
Further we have
\[T_{\rm lin}(\xb)=\left\{(u_1,u_2)\mv -u_2\leq0,\ (-u_1,-u_2)\in Q_{\rm EC}\right\}=-Q_{\rm EC}\]
and
${\cal C}(\xb)=T_{\rm lin}(\xb)$, i.e. we have to analyze the problem with respect to the two critical directions $(1,0)$ and $(0,1)$.
\begin{enumerate}
\item $u=(1,0)$: Then $\Lambda^1(\xb;u)=\emptyset$ and $\Lambda^0(\xb;u)=\hat\Lambda^0(\xb;u)=\Lambda^0(\xb)$ and taking $\lambda=(\alpha,0,-\alpha)$ with $\alpha>0$ we have
\begin{equation}\label{EqFirstCritDir}u^T\nabla_x^2{\cal L}(\xb,0,\lambda)u=2\alpha a\begin{cases}
<0&\mbox{if $a<0$,}\\>0&\mbox{if $a>0$}
\end{cases}.\end{equation}
By the second-order conditions \eqref{EqSecOrdNec} we conclude that $\xb$ is not a local minimizer for $a<0$. In case $a=0$ the constraint mapping is polyhedral and hence metrically subregular. Since $\Lambda^1(\xb,u)=\emptyset$ we can also conclude from Theorem \ref{ThBaseOptCond} that $\xb$ is not a local minimizer in case $a=0$.
\item $u=(0,1)$: In this case LICQ(u) is fulfilled and we have $\hat\Lambda^1(\xb;u)=\Lambda^1(\xb;u)=\{(0,-1,0)\}$. Since
$u^T\nabla_x^2{\cal L}(\xb,1,(0,-1,0))u=1>0$, together with \eqref{EqFirstCritDir}, we conclude from Theorem \ref{ThEssLocMin} that $\xb$ is a essential local minimizer of second order in case $a>0$.

\end{enumerate}
\end{example}

Since extended M-stationarity is usually difficult to verify in practice, we now introduce the following concept of {\em strong M-stationarity}, which builds a bridge between M-stationarity and S-stationarity. In what follows we note by $r(\xb)$ the rank of the family of gradients
\begin{equation}\label{EqActGrad}\{\nabla g_i(\xb)\mv i\in \bar I_g\}\cup\{\nabla h_i(\xb)\mv i\in\{1,\ldots,p\}\}\cup\{\nabla G_i(\xb)\mv i\in \bar I^{0+}\cup \bar I^{00}\}\cup
\{\nabla H_i(\xb)\mv i\in \bar I^{+0}\cup \bar I^{00}\}.
\end{equation}
\begin{definition}
\begin{enumerate}
\item A triple of index sets $(J_g, J_G, J_H)$, $J_g\subset \bar I_g$, $J_G\subset \bar I^{0+}\cup\bar I^{00}$, $J_H\subset \bar I^{+0}\cup\bar I^{00}$
 is called a {\em MPEC working set} for the MPEC \eqref{EqMPEC}, if $J_G\cup J_H=\{1,\ldots,q\}$,
\[\vert J_g\vert +p+\vert J_G\vert+\vert J_H\vert=r(\xb)\]
and the family of gradients
\[\{\nabla g_i(\xb)\mv i\in J_g\}\cup\{\nabla h_i(\xb)\mv i\in\{1,\ldots,p\}\}\cup\{\nabla G_i(\xb)\mv i\in J_G\}\cup
\{\nabla H_i(\xb)\mv i\in J_H\}\]
is linearly independent.
\item The point $\xb$ is called {\em strongly M-stationary} for the MPEC \eqref{EqMPEC}, if there is a MPEC working set $(J_g,J_G,J_H)$ together with a multplier $\lambda=(\lambda^g,\lambda^h,\lambda^G,\lambda^H)\in \Lambda^1(\xb)$ satisfying
\begin{eqnarray}
\label{EqLambdaZero_g}&&\lambda_i^g=0,\ i\in\{1,\ldots,l\}\setminus J_g,\\
\label{EqLambdaZero_G}&&\lambda_i^G=0,\ i\in\{1,\ldots,q\}\setminus J_G,\\
\label{EqLambdaZero_H}&&\lambda_i^H=0,\ i\in\{1,\ldots,q\}\setminus J_H,\\
\label{EqComplCondActSet}&&\lambda_i^G\geq0, \lambda_i^H\geq0,\ i\in J_G\cap J_H.
\end{eqnarray}
\end{enumerate}
\end{definition}
Note that the condition $J_G\cup J_H=\{1,\ldots,q\}$ implies $\bar I^{0+}\subset J_G$ and $\bar I^{+0}\subset J_H$. By the definition, every strongly M-stationary point is M-stationary. However, the converse is not true as can be seen from the example \[\min -x_1\mbox{ subject to } -(x_1,x_2)\in Q_{EC},\] where $\xb=(0,0)$ is M-stationary but not strongly M-stationary.
\begin{theorem}\label{ThStrongMStat}
Assume that $\xb$ is extended M-stationary for the problem
\eqref{EqMPDC} with $F$ and $\Omega$ given by \eqref{EQMPECData}
and assume that there exists some MPEC working set. Then $\xb$ is
strongly M-stationary.
\end{theorem}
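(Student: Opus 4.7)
The plan is to derive strong M-stationarity from an arbitrary Mordukhovich multiplier $\mu \in \Lambda^1(\xb)$, which is available by applying extended M-stationarity in the direction $u = 0$. The approach is a Carath\'eodory-style compression of $\mu$ combined with a combinatorial completion step that uses the given MPEC working set as a template. First, I would classify each biactive index $i \in \bar I^{00}$ according to the sign pattern the M-complementarity imposes on $(\mu^G_i,\mu^H_i)$: type (B) when both are strictly positive, type (G) when $\mu^H_i = 0$ with $\mu^G_i$ of arbitrary sign, and type (H) symmetrically. Within the convex polyhedron $\Pi$ of all $\lambda = (\lambda^g,\lambda^h,\lambda^G,\lambda^H)$ satisfying the Lagrange equation $\nabla_x{\cal L}(\xb,1,\lambda) = 0$ together with the basic M-sign constraints, plus $\lambda^G_i,\lambda^H_i \geq 0$ on type (B) indices, $\lambda^H_i = 0$ on type (G) indices, and $\lambda^G_i = 0$ on type (H) indices, the multiplier $\mu$ is a point, so $\Pi$ is nonempty.

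Next, I would extract from $\Pi$ a multiplier $\tilde\lambda$ with minimal support via a Carath\'eodory-cone argument: modulo its lineality space (arising from the unconstrained $\lambda^h$ and the sign-free components at $\bar I^{0+}, \bar I^{+0}$ and at type (G)/(H) indices), $\Pi$ has vertices whose support is contained in a linearly independent subfamily of the active gradients. Define tentative sets $J_g = \{i \in \bar I_g \mv \tilde\lambda^g_i \not=0\}$, $J_G = \bar I^{0+} \cup \{i \in \bar I^{00} \mv \tilde\lambda^G_i \not=0\}$, and $J_H = \bar I^{+0} \cup \{i \in \bar I^{00} \mv \tilde\lambda^H_i \not=0\}$. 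If $|J_g| + p + |J_G| + |J_H| < r(\xb)$ or $J_G \cup J_H \not= \{1,\ldots,q\}$, augment these sets by borrowing indices from the given MPEC working set $(\bar J_g, \bar J_G, \bar J_H)$ via basis exchange in the linear-independence matroid of active gradients; at each newly inserted index the corresponding component of $\tilde\lambda$ is taken to be zero, which is automatically compatible with the Lagrange equation and with the nonnegativity condition \eqref{EqComplCondActSet}. By the construction of $\Pi$, biactive indices of types (G) and (H) land in $J_G \setminus J_H$ and $J_H \setminus J_G$ respectively, so any negative value of $\tilde\lambda^G_i$ or $\tilde\lambda^H_i$ remains outside $J_G \cap J_H$.

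The main obstacle is the augmentation step itself: one must complete the linearly independent support of $\tilde\lambda$ to a full working set of cardinality $r(\xb)$, while preserving the union property $J_G \cup J_H = \{1,\ldots,q\}$ and without forcing any type (G) or (H) index into $J_G \cap J_H$. This is precisely where the hypothesis that some MPEC working set exists is essential, serving as the combinatorial template that guarantees such a completion is feasible. Once $(J_g, J_G, J_H)$ is verified to meet the definition of an MPEC working set, the multiplier $\tilde\lambda$ satisfies \eqref{EqLambdaZero_g}--\eqref{EqComplCondActSet} by design, and since $\tilde\lambda \in \Pi \subset \Lambda^1(\xb)$, strong M-stationarity of $\xb$ follows.
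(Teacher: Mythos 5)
There is a genuine gap, and it is fatal to the strategy rather than a repairable detail. Your construction uses extended M-stationarity only through the direction $u=0$, i.e., you work exclusively with a multiplier $\mu\in\Lambda^1(\xb)$, which is plain M-stationarity. But the implication ``M-stationary $+$ an MPEC working set exists $\Rightarrow$ strongly M-stationary'' is false: for $\min -x_1$ subject to $-(x_1,x_2)\in Q_{\rm EC}$ at $\xb=(0,0)$ (the paper's own counterexample following the definition), the unique multiplier is $(\lambda^G_1,\lambda^H_1)=(-1,0)\in\Lambda^1(\xb)$, the unique MPEC working set is $J_G=J_H=\{1\}$ since $r(\xb)=2$ forces both gradients into the working set, and condition \eqref{EqComplCondActSet} then fails. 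Tracing your argument on this example shows exactly where it breaks: index $1$ is of your type (G), your minimal-support $\tilde\lambda$ has support $\{\nabla G_1\}$ of cardinality $1<r(\xb)=2$, and the only possible augmentation inserts index $1$ into $J_H$ as well, landing the negative component $\tilde\lambda^G_1=-1$ inside $J_G\cap J_H$. Your claim that type (G)/(H) indices ``remain outside $J_G\cap J_H$'' after augmentation is therefore not something the working-set hypothesis can deliver; the cardinality requirement $\vert J_g\vert+p+\vert J_G\vert+\vert J_H\vert=r(\xb)$ together with $J_G\cup J_H=\{1,\ldots,q\}$ can force a biactive index carrying a negative multiplier into the intersection.

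What is missing is precisely the use of extended M-stationarity in \emph{nonzero} directions. The paper's proof runs an active-set (pivoting) algorithm: starting from the given working set and its uniquely determined multiplier, it repeatedly drops an index whose multiplier has the wrong sign, computes a direction $d$ with $\nabla f(\xb)d=-1$ orthogonal to the remaining working gradients, and performs a ratio test to decide which constraint enters. The single place where extended M-stationarity enters is the claim that the ratio test always attains a finite minimum: otherwise $d$ would lie in $T_{\rm lin}(\xb)$ with $\nabla f(\xb)d<0$, so $d$ would be a critical direction with $\Lambda^1(\xb;d)\not=\emptyset$, forcing $\nabla f(\xb)d=0$ by \eqref{EqLambda0Nabla_f} --- a contradiction. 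This directional information is exactly what excludes the bad sign patterns that survive your static Carath\'eodory-plus-matroid-exchange construction. Any correct proof must exploit $\Lambda^1(\xb;u)\not=\emptyset$ for the nonzero critical directions generated along the way (or an equivalent ``no linearized descent direction'' statement); a completion argument based on a single multiplier from $\Lambda^1(\xb)$ cannot succeed.
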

\begin{proof}
Since $\xb$ is extended M-stationary, we have $\Lambda^1(\xb)\not=\emptyset$ and therefore $\nabla f(\xb)$ can be represented as a linear combination of the gradients \eqref{EqActGrad}. It follows that for every MPEC working set $J=(J_g,J_G,J_H)$ there is a unique multiplier $\lambda(J)=(\lambda^g,\lambda^h,\lambda^G,\lambda^H)$ satisfying \eqref{EqLambdaZero_g}-\eqref{EqLambdaZero_H} and $\nabla_x{\cal L}(\xb,1,\lambda(J))=0$. Now let $J^0=(J_g^0,J_G^0,J_H^0)$ be an arbitrarily fixed working set and choose $b=(b^g,b^G,b^H)\in\R^l_+\times\R^q_-\times\R^q_-$ with
$b^g_i=0$, $i\in J_g^0$, $b^G_i=0$, $i\in J_G^0$ and $b^H_i=0$, $i\in J_H^0$ such that for all $u \in \R^n$ the family of gradients
\begin{eqnarray}\nonumber&\{\nabla g_i(\xb)\mv i\in \bar I_g, \nabla g_i(\xb)u=b^g_i\}\cup\{\nabla h_i(\xb)\mv i\in\{1,\ldots,p\}, \nabla h_i(\xb)u=0\}&\\
\label{EqLinDep1}&\cup\{\nabla G_i(\xb)\mv i\in \bar I^{0+}\cup \bar I^{00},\nabla G_i(\xb)u=b^G_i\}\cup
\{\nabla H_i(\xb)\mv i\in \bar I^{+0}\cup \bar I^{00},\nabla H_i(\xb)u=b^H_i\}&
\end{eqnarray}
is linearly independent. Such a vector $b$ exists by the following arguments. For every triple of index sets $K=(K_g,K_G,K_H)$, $K_g\subset \bar I^g$, $K_G\subset \bar I^{0+}\cup\bar I^{00}$, $K_H\subset \bar I^{+0}\cup\bar I^{00}$ let ${\cal B}(K)$ denote a basis for the subspace
\[\{\mu=\{(\mu^g,\mu^h,\mu^G,\mu^H)\mv \begin{array}{l}\nabla_x{\cal L}(\xb,0,\mu)=0,\\
\mu_i^g=0,\ i\in\{1,\ldots,l\}\setminus K_g,\\
\mu_i^G=0,\ i\in\{1,\ldots,q\}\setminus K_G,\\
\mu_i^H=0,\ i\in\{1,\ldots,q\}\setminus K_H
\end{array}\},\]
where ${\cal B}(K)$ is eventually empty. By the definition of a MPEC working set, for every basis element $\mu=(\mu^g,\mu^h,\mu^G,\mu^H)$ there must be either
an index $i\in \bar I_g\setminus J_g^0$ with $\mu^g_i\not=0$ or an index $i\in (\bar I^{0+}\cup\bar I^{00})\setminus J_G^0$ with $\mu^G_i\not=0$
or an index $i\in (\bar I^{+0}\cup\bar I^{00})\setminus J_H^0$ with $\mu^H_i\not=0$.
The union of the bases $\bigcup_K{\cal B}(K)$ consists of finitely many elements and therefore we can find $b=(b^g,b^G,b^H)\in\R^l_+\times\R^q_-\times\R^q_-$ with $b^g_i=0$, $i\in J_g^0$, $b^G_i=0$, $i\in J_G^0$ and $b^H_i=0$, $i\in J_H^0$ with
\[{b^g}^T\mu^g+{b^G}^T\mu^G+{b^H}^T\mu^H\not=0\ \forall (\mu^g,\mu^h,\mu^G,\mu^H)\in\bigcup_K{\cal B}(K)\]
We claim that this vector $b$ has the required property. If there would exist $u\in\R^n$ such that \eqref{EqLinDep1} does not hold, by taking $K_g:=\{i\in \bar I_g\mv \nabla g_i(\xb)u=b^g_i\}$, $K_G:=\{i\in \bar I^{0+}\cup \bar I^{00}\mv \nabla G_i(\xb)u=b^G_i\}$, $K_H:=\{i\in \bar I^{+0}\cup \bar I^{00}\mv\nabla H_i(\xb)u=b^H_i\}$, there is some element $\mu=(\mu^g,\mu^h,\mu^G,\mu^H)\in{\cal B}(K_g,K_G,K_H)$ with
\[0=\nabla_x{\cal L}(\xb,0,\mu)u={b^g}^T\mu^g+{b^G}^T\mu^G+{b^H}^T\mu^H\not=0,\]
a contradiction, and therefore our claim is proved. Now consider the following algorithm:

\noindent\fbox{\parbox{\hsize}{\parindent2em\noindent
\Itl1{1:} $u:=0$, $J:=J^0$, $(\lambda^g,\lambda^h,\lambda^G,\lambda^H):=\lambda(J)$;
\Itl1{2:} {\tt while} $((\exists i\in J_g:\lambda^g_i<0)\ \vee\ (\exists i\in J_G\cap J_H: \lambda^G_i<0 \vee \lambda^H_i<0))$
\Itlb2{3:}{\tt if} $(\exists i_0\in J_g:\lambda^g_{i_0}<0)$
\Itl3{4:} $J_g:=J_g\setminus\{i_0\}$;
\Itl2{5:} {\tt else}
\Itlb3{6:} select $i_0\in J_G\cap J_H$ with $\lambda^G_{i_0}<0$ or $\lambda^H_{i_0}<0$;
\Itl3{7:}{\tt if }$(\lambda^G_{i_0}<0)$
\Itl4{8:} $J_G:=J_G\setminus\{i_0\}$;
\Itl3{9:} {\tt else}
\Itl4{10:} $J_H:=J_H\setminus\{i_0\}$;
\Itl2{11:}$\}$
\Itl2{12:} Compute search direction $d$ with $\nabla f(\xb)d=-1$, $\nabla g_i(\xb)d=0,i\in J_g$,
\It3 $\nabla h_i(\xb)d=0,i=1,\ldots,p$, $\nabla G_i(\xb)d=0,i\in J_G$, $\nabla H_i(\xb)d=0, i\in J_H$;
\Itl2{13:} Compute step length
\[\hat\alpha_j=\min\{\min_{\AT{i\in\bar I_g\setminus J_g}{\nabla g_i(\xb) d>0}}\{\frac{b_i^g-\nabla g_i(\xb)u}{\nabla g_i(\xb) d}\}, \min_{\AT{i\in \bar I^{00}\setminus J_G}{\nabla G_i(\xb)d<0}}\{\frac{b^G_i-\nabla G_i(\xb)u}{\nabla G_i(\xb)d}\}, \min_{\AT{i\in \bar I^{00}\setminus J_H}{\nabla H_i(\xb)d<0}}\{\frac{b^H_i-\nabla H_i(\xb)u}{\nabla H_i(\xb)d}\}\};\]
\Itl2{14:}//The index $j$ indicates the constraint to enter the MPEC working set
\Itl2{15:} Either set $J_g:=J_g\cup\{j\}$ or $J_G:=J_G\cup\{j\}$ or $J_H:=J_H\cup\{j\}$, depending in which part
\Itl3{} the minimum is attained when computing $\hat \alpha_j$;
\Itl2{16:} $u:=u+\hat\alpha_j d$, compute $(\lambda^g,\lambda^h,\lambda^G,\lambda^H):=\lambda(J)$;
\Itl1{17:}\}
}}\\[1ex]

This algorithm is very close to the well-known pivoting algorithms from linear programming. It can be also considered as a so-called {\em  active set method}, we refer the reader to \cite{Flet81} for an introduction to this method.

At the beginning of each cycle $(J_g,J_G,J_H)$  constitutes a MPEC working set and  we have
\begin{eqnarray*}
&&\nabla g_i(\xb)u\leq b_i^g,\ i\in \bar I_g,\\
&&\nabla h_i(\xb)u= 0,\ i=1,\ldots,p,\\
&&\nabla G_i(\xb)u= b_i^G=0,\ i\in \bar I^{0+},\\
&&\nabla G_i(\xb)u\geq b_i^G,\ i\in \bar I^{00}\\
&&\nabla H_i(\xb)u= b_i^H= 0,\ i\in \bar I^{+0},\\
&&\nabla H_i(\xb)u\geq b_i^H,\ i\in \bar I^{00}\\
&&(\nabla G_i(\xb)u-b_i^G)(\nabla H_i(\xb)u-b_i^H)=0,\ i\in \bar I^{00}
\end{eqnarray*}
and
\[J_g=\{i\in\bar I_g\mv \nabla g(\xb)u=b_i^g\},\ J_G=\{i\in\{1,\ldots q\}\mv \nabla G_i(\xb)u=b_i^H\},\ J_H=\{i\in\{1,\ldots q\}\mv \nabla H_i(\xb)u=b_i^H\}.\]
The computation of the search direction $d$ in line 12 is possible, because after removing index $i_0$ from the MPEC working set the family of  gradients
\[\{\nabla f(\xb)\}\cup\{\nabla g_i(\xb)\mv i\in J_g\}\cup \{\nabla h_i(\xb)\mv i=1,\ldots,p\}\cup \{\nabla G_i(\xb)\mv i\in J_G\}\cup \{\nabla H_i(\xb)\mv i\in J_H\}\]
is linearly independent. The minimum when computing $\hat \alpha_j$ must be attained, because otherwise the direction $d$ would fulfill $d\in T_{\rm lin}(\xb)$ and $\nabla f(\xb)d<0$ contradicting extended M-stationarity of $\xb$. Further, our construction of $b$ guarantees that the index $j$ is unique and $\hat \alpha_j$ is strictly positive.

Since the value $\nabla f(\xb)u$ strictly decreases in each cycle and only a finite number of MPEC working sets exist, the algorithm always terminates in a finite number of steps and the outcome $J=(J_g,J_G,J_H)$ together with $\lambda(J)$ proves strong M-stationarity of $\xb$.
\end{proof}

The algorithm used in the proof can be implemented in practice to
test whether a feasible point $\xb$ is strongly M-stationary or
not. In such an implementation the proper choice of $b$ is
crucial. A random choice of $b$ with
\[b_i^g>0, i\in \bar I\setminus J_g^0,\ b_i^G<0,\ i\in \bar I^{00}\setminus J_G^0,\ b_i^H<0,\ i\in \bar I^{00}\setminus J_H^0\]
and fixing the other components to $0$ will yield  a suitable
vector $b$ with probability 1,  as can be easily seen from the
arguments used in the proof. Moreover, the unlikely case of a
wrong choice of $b$ can be easily detected during the course of
the algorithm and then we can modify $b$ to meet the requirements.
Of course, one has to implement an exit in case that
$\hat\alpha_j=\infty$, i.e. $\{i\in\bar I_g\setminus J_g\mv\nabla
g_i(\xb) d>0\}=\{i\in \bar I^{00}\setminus J_G\mv\nabla
G_i(\xb)d<0\}=\{i\in \bar I^{00}\setminus J_H\mv\nabla
H_i(\xb)d<0\}=\emptyset$, since then the computed direction $d$ is
a descent direction.

In the following example we show the process of the algorithm given in
the proof of Theorem \ref{ThStrongMStat}.
\begin{example}
Consider the MPEC of Example \ref{ExMStatNotLocMin}. Then
$r(\xb)=3$ and we start the algorithm with the MPEC working set
$J^0_g:=\{1\}$, $J^0_G:=J^0_H:=\{1\}$ and $b_2^g=1$,
$b_1^g=b_1^G=b_1^H=0$, $J:=J^0$, $u=0$, resulting in
$\lambda(J)=(-2,0,3,1)$. Since $\lambda_1^g<0$, the first
inequality  leaves $J_g$ yielding $J_g=\emptyset$. Then the
direction $d=(0,0,\frac 12)^T$ is computed as the unique solution
of
\[\nabla f(\xb)d=d_1+d_2-2d_3=-1,\ \nabla G_1(\xb)d=d_1=0,\ \nabla
H_1(\xb)d=d_2=0.\] We have $\bar I^{00}=J_G=J_H$ and $\nabla
g_1(\xb)d=-\frac 12$, $\nabla g_2(\xb)d=\frac 12$ and therefore
the step length $\hat\alpha_j$ amounts to
\[\hat\alpha_j=\frac{b_2^g-\nabla g_2(\xb)u}{\nabla g_2(\xb)d}=2.\]
Next we set $J_g:=J_g\cup\{2\}=\{2\}$ and compute
$u:=u+\hat\alpha_j(0,0,\frac 12)^T=(0,0,1)^T$ and
$\lambda(J)=(0,2,1,-1)$.

The condition of the while loop is again not fulfilled because of
$1\in J_G\cap J_H $, $\lambda_1^H<0$ and hence we must start a new
cycle. The index 1 leaves $J_H$ and thus $J_H=\emptyset$. The
search direction $d:=(0,1,1)$ is now computed by
\[\nabla f(\xb)d=d_1+d_2-2d_3=-1,\ \nabla g_2(\xb)d:=-d_2+d_3=0,\ \nabla G_1(\xb)d=d_1=0.\]
Since $\nabla g_1(\xb)d=-1$,  $\bar I^{00}=J_G$ and $\nabla
H_1(\xb)d = 1$, we obtain $\hat\alpha_j=\infty$ and by the
comments above we  stop the algorithm because $d$ is a feasible
descent direction proving the non-optimality of $\xb$.
\end{example}

The assumption, that one MPEC working set  exists, is fulfilled,
if there are index sets $\tilde J_G,\tilde J_H\subset \bar I^{00}$
with $\tilde J_G\cup\tilde J_H= \bar I^{00}$ such that the family
of gradients
\begin{equation}\label{EqLinIndepWorkSet}\{\nabla h_i(\xb)\mv
i=1,\ldots,p\}\cup\{\nabla G_i(\xb)\mv i\in \bar I^{0+} \cup\tilde
J_G\}\cup\{\nabla H_i(\xb)\mv i\in \bar I^{+0}\cup\tilde
J_H\}\end{equation}
 is linearly independent and this seems to be a
rather weak assumption.  It is e.g. fulfilled if $q_1=q$ (i.e., we
treat all functions occurring in the complementarity conditions as
nonlinear functions) and for one direction $u\in T_{\rm lin}(\xb)$
the first-order condition for directional metric subregularity
$\Lambda^0(\xb;u)=\emptyset$ is fulfilled. To see this, choose
$\tilde J_G=I^{0+}(u)\cup I^{00}(u)$, $\tilde J_H=I^{+0}(u)$. Then
the family of gradients \eqref{EqLinIndepWorkSet} must be lineraly
independent, since otherwise there is a nontrivial linear
combination
\[\sum_{i=1}^p\lambda_i^h\nabla h_i(\xb)+\sum_{i\in\bar I^{0+}\cup I^{0+}(u)\cup
I^{00}(u)}\lambda_i^G\nabla G_i(\xb)+\sum_{i\in\bar I^{+0}\cup
I^{+0}(u)}\lambda_i^H\nabla H_i(\xb)=0\] resulting in $0$ and
hence, by setting $\lambda_i^g:=0$, $i=1,\ldots, l$,
$\lambda_i^G:=0$, $i\in \bar I^{+0}\cup I^{+0}(u)$,
$\lambda_i^H:=0$, $i\in \bar I^{0+}\cup I^{0+}(u)\cup I^{00}(u)$
and therefore $\lambda_i^G\lambda_i^H=0$, $i\in I^{00}(u)$, we
would obtain $0\not=(\lambda^g,\lambda^h,\lambda^G,\lambda^H)\in
\Lambda^0(\xb;u)$.

The following theorem justifies the definition of strongly M-stationary solutions.
\begin{theorem}
Let $\xb$ be feasible for \eqref{EqMPEC} and assume  that LICQ(0)
is fulfilled  at $\xb$. Then $\xb$ is strongly M-stationary if and
only if it is S-stationary.
\end{theorem}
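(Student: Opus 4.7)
The plan is to prove the equivalence by pinning down, under LICQ(0), the unique possible shape of a MPEC working set. The key observation is a counting identity that forces the working set to coincide with the canonical one, after which both implications become bookkeeping.

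First I would compute the exact value of $r(\xb)$ using LICQ(0). Since the full family \eqref{EqActGrad} is linearly independent by LICQ(0), its cardinality equals its rank, giving
\[
r(\xb) = |\bar I_g| + p + |\bar I^{0+}| + |\bar I^{+0}| + 2|\bar I^{00}|.
\]
Next, I would analyze what a MPEC working set $(J_g, J_G, J_H)$ must look like. The inclusions $J_g \subset \bar I_g$, $J_G \subset \bar I^{0+}\cup \bar I^{00}$ and $J_H \subset \bar I^{+0}\cup \bar I^{00}$, together with $J_G\cup J_H = \{1,\ldots,q\} = \bar I^{+0}\dot\cup \bar I^{0+}\dot\cup \bar I^{00}$, immediately force $\bar I^{+0}\subset J_H$, $\bar I^{0+}\subset J_G$, and $J_G\cap J_H \subset \bar I^{00}$. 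A short inclusion-exclusion gives
\[
|J_G|+|J_H| = |\bar I^{0+}| + |\bar I^{+0}| + |\bar I^{00}| + |J_G\cap J_H|.
\]
Combining this with the cardinality condition $|J_g|+p+|J_G|+|J_H|=r(\xb)$ yields $|J_g|+|J_G\cap J_H| = |\bar I_g|+|\bar I^{00}|$. Since $J_g\subset \bar I_g$ and $J_G\cap J_H\subset \bar I^{00}$, equality is only possible if $J_g=\bar I_g$ and $J_G\cap J_H = \bar I^{00}$, which in turn forces $J_G=\bar I^{0+}\cup \bar I^{00}$ and $J_H=\bar I^{+0}\cup \bar I^{00}$. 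Thus the working set is unique under LICQ(0).

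The equivalence then falls out. For ($\Leftarrow$), given S-stationarity I would choose $\lambda\in\hat\Lambda^1(\xb)$ and verify that the canonical triple above is a valid MPEC working set (linear independence of the associated gradients is part of LICQ(0)); the support conditions \eqref{EqLambdaZero_g}--\eqref{EqLambdaZero_H} follow from $\lambda \in N(F(\xb);\Omega)$ using the explicit description of $\hat N(a;Q_{\rm EC})$, and \eqref{EqComplCondActSet} is precisely the S-stationarity sign condition on $\bar I^{00}$. For ($\Rightarrow$), given strong M-stationarity with multiplier $\lambda\in \Lambda^1(\xb)$, the uniqueness argument tells us the working set is the canonical one, so \eqref{EqComplCondActSet} reads $\lambda_i^G\geq 0,\lambda_i^H\geq 0$ for all $i\in \bar I^{00}$; combined with \eqref{EqLambdaZero_G}--\eqref{EqLambdaZero_H} this places $\lambda$ in $\hat N(F(\xb);\Omega)$, i.e.\ in $\hat \Lambda^1(\xb)$.

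The main obstacle is the counting step that collapses the set of admissible working sets to the canonical one; everything else is translation between the MPEC index sets and the normal cone formulas. I do not expect any technical difficulty beyond that, since under LICQ(0) the directional refinement built into $\Lambda^1(\xb;u)$ plays no role here.
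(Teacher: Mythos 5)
Your proposal is correct and follows essentially the same route as the paper: the paper's proof simply asserts that under LICQ(0) there is exactly one MPEC working set, namely $J_g=\bar I_g$, $J_G=\bar I^{0+}\cup\bar I^{00}$, $J_H=\bar I^{+0}\cup\bar I^{00}$, and your counting identity $|J_g|+|J_G\cap J_H|=|\bar I_g|+|\bar I^{00}|$ is precisely the justification the paper leaves implicit. The remaining translation between the working-set conditions and membership in $\hat\Lambda^1(\xb)$ matches the intended argument.
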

\begin{proof}
The statement follows immediately  from the fact that under
LICQ(0) there exist exactly one MPEC working set and this set
fulfills $J_g=\bar I_g$, $J_G=\bar I^{0+}\cup\bar I^{00}$,
$J_H=\bar I^{+0}\cup\bar I^{00}$.
\end{proof}

We summarize the relations between the various stationarity concepts in the following picture.
\unitlength1mm
\[\begin{picture}(100,40)
\put(0,20){\framebox(20,5){S-stat.}}
\put(20,20){\parbox{20mm}{\[\begin{array}{c}\longrightarrow\\\mathop{\longleftarrow}\limits_{{\rm LICQ(0)}}\end{array}\]}}
\put(40,20){\framebox(20,5){B-stat.}}
\put(60,22.5){\parbox{20mm}{\[\begin{array}{c}\mathop{\longrightarrow}\limits^{\rm
GGCQ}\\\longleftarrow\end{array}\]}}
\put(80,20){\framebox(20,5){ext. M-stat.}}
\put(38.5,2.5){\framebox(25,5){strongly M-stat.}}
\put(60,0){\parbox[b]{20mm}{\[\longrightarrow\]}}
\put(80,2.5){\framebox(20,5){ M-stat.}}
\put(38.5,32){\framebox(25,5){loc. minimizer}}
\put(50,28){$\downarrow$}
\put(24.5,12.5){\vector(2,-1){8.5}}
\put(24.5,12.5){\vector(-2,1){8.5}}
\put(14,10){\scriptsize LICQ(0)}
\put(82,12.5){\scriptsize MPEC working set exists}
\put(86,17){\vector(-2,-1){17}}
\end{picture}\]
We see from this picture, that S-stationarity also implies strong M-stationarity under the assumptions GGCQ and that one MPEC working set exists, which is much weaker that LICQ(0).

Using similar arguments  it can also be shown that under the
weaker condition {\em partial MPEC LICQ} \cite{Ye05} the concepts
of strongly M-stationarity and S-stationarity are equivalent.
However, for other conditions ensuring S-stationarity like the
{\em intersection property} \cite{FleKanOut07} or the condition
found in \cite{FuPa99}, the relation between strong M-stationarity
and S-stationarity is still unknown.

Finally we present an example where a local minimizer is strongly M-stationary  but not S-stationary.

\begin{example}[cf. \cite{SchSch00,FaLeyMun12}]
Consider
\begin{eqnarray*}\min_{x=(x_1,x_2,x_3)} f(x)&:=&x_1+x_2-x_3\\
g_1(x)&:=& -4x_1+x_3\leq 0,\\
g_2(x)&:=& -4x_2+x_3\leq 0,\\
-(G_1(x),H_1(x))&:=&-(x_1,x_2)\in Q_{\rm EC}.
\end{eqnarray*}
Then $\xb=(0,0,0)$ is  a local minimizer, GGCQ is fulfilled since all constraints are linear and therefore the multifunction  $M(x)=F(x)-\Omega$ is polyhedral and consequently metrically subregular at $(\xb,0)$ by Robinson's result \cite{Rob81}. Further it can be easily checked that $J_g=\{1,2\}$, $J_G=\{1\}$, $J_H=\emptyset$ constitutes a MPEC working set and, by taking the multipliers $\lambda_1^g=\frac 34$, $\lambda^g_2=\frac 14$, $\lambda_1^G=-2$, $\lambda_1^H=0$, we see that $\xb$ is strongly M-stationary. But, as pointed out in \cite{FaLeyMun12}, $\xb$ is not S-stationary.
\end{example}

\section*{Acknowledgments} The author is very grateful to
the referees for constructive comments that significantly improved the presentation. In particular, the proof of Theorem \ref{ThEssLocMin} could be considerably simplified.

\end{document}